\title{On endomorphisms of topological Hochschild homology}
\author{Maxime Ramzi}
\date{}
\newtheorem{thm}{Theorem}[section]
\newtheorem{lm}[thm]{Lemma}
\newtheorem{prop}[thm]{Proposition}
\newtheorem{cor}[thm]{Corollary}
\newtheorem*{thm*}{Theorem}
\theoremstyle{definition}
\newtheorem{defn}[thm]{Definition}
\newtheorem{cons}[thm]{Construction}
\newtheorem{nota}[thm]{Notation}
\newtheorem{ex}[thm]{Example}
\newtheorem{rmk}[thm]{Remark}
\newtheorem{obs}[thm]{Observation}
\newtheorem{thmx}{Theorem}
\newcommand{\THH}{\mathrm{THH}}
\newcommand{\Oo}{\mathcal{O}}
\newcommand{\op}{^{\mathrm{op}}}
\newcommand{\cat}{\mathrm}
\newcommand{\Cat}{\cat{Cat}}
\newcommand{\on}{\operatorname}
\newcommand{\id}{\mathrm{id}}
\newcommand{\Fun}{\on{Fun}}
\newcommand{\map}{\on{map}}
\newcommand{\Map}{\on{Map}}
\newcommand{\NN}{\mathbb N}
\newcommand{\Z}{\mathbb Z}
\newcommand{\Sph}{\mathbb S}
\newcommand{\CMon}{\mathrm{CMon}}
\newcommand{\Ss}{\mathcal{S}}
\newcommand{\Sp}{\cat{Sp}}
\newcommand{\PrL}{\cat{Pr}^\mathrm{L} }
\newcommand{\Alg}{\mathrm{Alg}}
\newcommand{\CAlg}{\mathrm{CAlg}}
\newcommand{\Mod}{\cat{Mod}}
\newcommand{\cn}{\mathrm{cn}}
\newcommand{\Cof}{\mathrm{Cof}}
\newcommand{\HH}{\mathrm{HH}}
\newcommand{\Perf}{\mathbf{Perf}}
\newcommand{\perf}{\mathrm{perf}}
\newcommand{\Ind}{\mathrm{Ind}}
\newcommand{\End}{\mathrm{End}}
\newcommand{\eend}{\mathrm{end}}
\newcommand{\pt}{\mathrm{pt}}
\newcommand{\colim}{\mathrm{colim}}
\newcommand{\fib}{\mathrm{fib}}
\newcommand{\add}{\mathrm{split}}
\newcommand{\dbl}{{\mathrm{dbl}}}
\newcommand{\st}{{\mathrm{st}}}
\newcommand{\ext}{\mathrm{ex}}
\newcommand{\Cob}{\mathrm{Cob}}
\newcommand{\cyc}{\mathrm{cyc}}
\newcommand{\one}{\mathbf{1}}
\newcommand{\Frrig}{\mathrm{Fr}^{\mathrm{rig}}}
\newcommand{\C}{\mathbf{C}}
\newcommand{\E}{\mathcal E}
\newcommand{\EE}{\mathbb E}
\newcommand{\F}{\mathbb F}
\newcommand{\cC}{\mathscr{C}}
\newcommand{\tr}{\mathrm{tr}}
\newcommand{\Tr}{\mathrm{Tr}}
\newcommand{\Mot}{\mathrm{Mot}}
\newcommand{\loc}{\mathrm{loc}}
\newcommand{\colax}{\mathrm{colax}}
\newcommand{\Day}{\mathrm{Day}}
\newcommand{\TR}{\mathrm{TR}}
\newcommand{\Lace}{\mathrm{Lace}}
\newcommand{\cospan}{\mathrm{coSpan}}
\newcommand{\fin}{\mathrm{fin}}
\DeclareFontFamily{U}{min}{}
\DeclareFontShape{U}{min}{m}{n}{<-> udmj30}{}
\newcommand{\category}{\xspace{$\infty$-category}\xspace}
\newcommand{\categories}{\xspace{$\infty$-categories}\xspace}
\newcommand{\operad}{\xspace{$\infty$-operad}\xspace}
\begin{document}

\maketitle
\begin{abstract}
    We compute endomorphisms of topological Hochschild homology ($\THH$) as a functor on stable $\infty$-categories, as well as variants thereof: we also compute endomorphisms of the $k$-linear Hochschild homology functor $\HH_k$ over some base $k$; and endomorphisms of $\THH$ as a functor on stably symmetric monoidal $\infty$-categories.
\end{abstract}
\section*{Introduction}\label{sec:introchEnd}
One important aspect in the study of cohomology theories is the study of their self-operations - for example, the Adams operations on topological $K$-theory helped simplify the solution of the Hopf invariant one problem \cite{adamsatiyah}. 

Beyond algebraic topology, one can also consider cohomology theories on schemes, and there again it is a reasonable question to ask what their operations are - as in topology, the Adams operations on algebraic $K$-theory provide a wealth of extra structure which can help in its study. A particularly rich source of cohomology theories in algebraic geometry is the theory of \emph{localizing invariants}, which are some kind of noncommutative cohomology theories, and for those we may try to compute (noncommutative) operations. Chief among those is algebraic $K$-theory, whose universality makes the calculation of noncommutative operation essentially trivial (or rather, reduces it to simply computing $K$-theory itself).

The goal of this paper is to study the endomorphisms of topological Hochschild homology, $\THH$, in various guises. Already in \cite{loday}, Loday constructed a number of operations on Hochschild homology of rational commutative rings, and used it to construct filtrations and refine the study of rational Hochschild homology.

Similar questions have also been studied by Wahl--Westerland \cite{wahlwesterland}, Wahl \cite{wahl} and Klamt \cite{klamt}. In these works, the authors study what they call ``formal operations'', which can be viewed as an approximation to the spectrum of operations, of Hochschild homology relative to a base commutative ring $R$, evaluated on dg algebras over $R$. While we are not able to access endomorphisms of $\THH$ as a functor on \emph{ring spectra}, we do give a complete description of the actual endomorphism spectrum of $\THH$ as a functor of \emph{able \categories}, as well as of the endomorphism monoid of $\THH$ as a symmetric monoidal functor. 

As a plain functor, the answer looks as follows:
\begin{thmx}\label{thm:endplain}
    As a plain functor $\THH: \Cat^\perf\to \Sp$, the $S^1$-action induces an equivalence $$\Sph[S^1]\simeq \eend(\THH)$$ 
As a symmetric monoidal functor, there is an equivalence $$\Map_{\CAlg(\Sp)}(\Sph^{S^1}, \Sph)\simeq \End^\otimes(\THH)$$ and the space $\Map_{\CAlg(\Sp)}(\Sph^{S^1}, \Sph)$ can be described\footnote{But it is not exactly $S^1$.}. 
\end{thmx}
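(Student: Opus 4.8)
The plan is to reduce both equivalences to computations in the $\infty$-category of localizing motives, and then to a concrete analysis of the resulting mapping objects. Since $\THH$ is a localizing invariant — it is finitary and carries Verdier sequences to fibre sequences — it factors through the universal such as $\Cat^\perf \xrightarrow{U} \Mot_\loc \xrightarrow{\overline\THH} \Sp$ with $\overline\THH$ colimit-preserving, and the universal property of $U$ identifies $\Fun^L(\Mot_\loc,\Sp)$ with the full subcategory of $\Fun(\Cat^\perf,\Sp)$ on localizing invariants; hence $\eend(\THH) \simeq \End_{\Fun^L(\Mot_\loc,\Sp)}(\overline\THH)$. Moreover $\overline\THH$ is symmetric monoidal with $\overline\THH(\mathbf 1) \simeq \Sph$, and for the Day convolution monoidal structure on $\Fun^L(\Mot_\loc,\Sp)$ — whose unit is $K$-theory, since $\map_{\Mot_\loc}(\mathbf 1, U(\cC)) \simeq K(\cC)$ — this exhibits $\overline\THH$ as a commutative algebra with structure map the cyclotomic trace, so $\End^\otimes(\THH) \simeq \Map_{\CAlg(\Fun^L(\Mot_\loc,\Sp))}(\overline\THH,\overline\THH)$. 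The cyclic invariance of $\THH$ produces the comparison map $\Sph[S^1] \to \eend(\THH)$, and a dual comparison with $\Map_{\CAlg(\Sp)}(\Sph^{S^1},\Sph)$ in the monoidal case; the theorem is the assertion that these are equivalences.

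The crux is computing $\End_{\Fun^L(\Mot_\loc,\Sp)}(\overline\THH)$, equivalently the self-operations of $\THH(R)$ natural in \emph{all} bimodule morphisms of $\mathrm{Perf}(R)$ as $R$ ranges over $\EE_1$-rings, since these categories generate $\Mot_\loc$ under colimits. Here I would use the identification of $\THH(\cC)$ with the dimension (categorical trace of the identity) of the dualizable object $\Ind\cC$ of $\PrL$, the $S^1$-action being the cyclic symmetry of the trace. Testing on the free rings $R = \Sph[\Omega X]$, for which $\THH(R) \simeq \Sph[\mathcal L X]$ is the suspension spectrum of the free loop space $\mathcal L X = \Map(S^1,X)$, produces the operations coming from the $S^1$-action and — evaluating on $R=\Sph[\mathbb Z]$ and restricting to the winding-one summand, where the action is the regular one — exhibits $\Sph[S^1] \to \eend(\THH)$ as split injective; the content is the reverse inequality, that there are no further operations. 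I expect this rigidity step to be the main obstacle: already the functor $X\mapsto\Sph[\mathcal L X]$ on \emph{spaces} carries more natural self-operations than just the $S^1$-action (for instance those induced by the self-maps of $S^1$ of degree $\neq 1$), so it is essential to use the full force of naturality over $\Cat^\perf$ — rather than over ring spectra, where the analogous question is out of reach — to force these to collapse; proving this, and checking that assembling the levelwise data over an indexing category with classifying space $BS^1$ introduces no $\lim^1$-type defect, is where the work lies.

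For the symmetric monoidal statement and the promised description of the answer, one runs the same analysis while tracking the multiplicative structure. The group structure on $S^1$ makes $\Sph[S^1]$ a bialgebra, hence $\Sph^{S^1} = \Sph[S^1]^\vee$ is a bialgebra, and the $S^1$-action on $\THH$ is by monoidal automorphisms; carrying this through shows that a symmetric monoidal self-transformation of $\THH$ is precisely the datum of an $\EE_1$-ring map $\Sph^{S^1} \to \Sph$ — informally, in the multiplicative world the free loop space is replaced by its function spectrum and the $S^1$-action by the corresponding $\Sph^{S^1}$-coaction — compatibly with composition, which gives $\End^\otimes(\THH) \simeq \Map_{\CAlg(\Sp)}(\Sph^{S^1},\Sph)$. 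Finally, $\Map_{\CAlg(\Sp)}(\Sph^{S^1},\Sph)$ is computed by hand: writing $\Sph^{S^1} \simeq \Sph \oplus \Sigma^{-1}\Sph$ as the trivial square-zero extension of $\Sph$ and using the cotangent-complex and Segal-conjecture package, one finds that the evident map from $S^1$ (evaluation at a point) fails to be an equivalence — it already fails on $\pi_0$, since $\Sph^{S^1}$ is coconnective — so the answer is a completion-type modification of $S^1$, which is what the footnote alludes to.
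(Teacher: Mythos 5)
Your high-level framing is right: reduce everything to $\Mot_\loc$, note that $\THH$ is a commutative algebra for Day convolution whose unit is $K$-theory, and produce the comparison map $\Sph[S^1]\to\eend(\THH)$ from the cyclic symmetry. You also correctly diagnose that splitting this map off (by evaluating, e.g., on suspension spectra of free loop spaces) is easy, and that the entire content is the \emph{surjectivity} of this comparison. But at exactly that point the proposal stops being a proof: you write that ``proving this $\dots$ is where the work lies,'' which is an accurate description of the gap rather than an argument closing it. The danger you flag — that $X\mapsto\Sph[\mathcal{L}X]$ on spaces has more operations than just the $S^1$-action — is real, and nothing in the proposal explains why these do not survive as natural endomorphisms of $\THH$ on $\Cat^\perf$. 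The paper's key technical ingredient, which is absent from your outline, is the Dundas--McCarthy presentation of $\THH$ (here \Cref{cor:presTHH}) as a filtered colimit of corepresentable functors on splitting motives. This converts $\THH\otimes^\Day(-)$ into an explicit formula, namely $P_1(-)^{\mathrm{lace}}$ (\Cref{prop:P1astensorTHH}), which combined with the classical computation of $P_1\THH^{\mathrm{lace}}\simeq\mathrm{T}^{S^1}$ (\Cref{prop:P1THH}) yields $\THH\otimes^\Day\THH\simeq\THH^{S^1}$, after which the adjunction $\map(\THH,\THH)\simeq\map_\THH(\THH\otimes^\Day\THH,\THH)$ together with the module-category equivalence $\Mod_\THH(\Fun^{\loc,\omega}(\Cat^\perf,\Sp))\simeq\Sp$ of \Cref{cor:modthh} finishes the computation. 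Your alternative route via $\dim(\Ind\cC)$ in $\PrL$ and free rings would have to recover an equivalent upper bound, and no mechanism for doing so is indicated.

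There is also a substantive error in the final step. You propose to compute $\Map_{\CAlg(\Sp)}(\Sph^{S^1},\Sph)$ by ``writing $\Sph^{S^1}\simeq\Sph\oplus\Sigma^{-1}\Sph$ as the trivial square-zero extension of $\Sph$.'' This is false as a statement about $\EE_\infty$-rings: the underlying $\Sph$-module is indeed $\Sph\oplus\Omega\Sph$, and rationally $\mathbb{Q}^{S^1}$ \emph{is} free on a degree $-1$ class (\Cref{lm:freerat}), hence a trivial square-zero extension, but $p$-adically $\F_p^{S^1}$ is emphatically not — if it were, $\Map_{\CAlg(\F_p)}(\F_p^{S^1},\F_p)$ would be contractible by the universal property of the square-zero extension, whereas Mandell's theorem (\Cref{thm:Mandell}, \Cref{cor:Mandell}) identifies it with a free loop space of the $p$-completed circle, which has $\pi_0\cong\Z_p$. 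The paper's \Cref{prop:map} combines the rational freeness with Mandell's theorem through an arithmetic fracture square, arriving at a disjoint union of circles indexed by $\widehat{\Z}$; the square-zero heuristic cannot see the nontrivial $p$-adic component and would give a wrong answer. Finally, note the slippage in the multiplicative part of your argument between $\EE_1$- and $\EE_\infty$-ring maps: what the theorem asserts, and what the Day-convolution adjunction together with \Cref{cor:thhsquaredmult} and \Cref{cor:modthh} delivers, is $\Map_{\CAlg(\Sp)}(\Sph^{S^1},\Sph)$, i.e.\ in $\EE_\infty$-rings.
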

We also prove analogous results in relative contexts, that is, for $\THH$ relative to a base commutative ring spectrum $k$. Thus, in the completely noncommutative setting, there seem to be no interesting operations besides the well-known circle action. The key tool to proving this result is (a slight extension of) the Dundas-McCarthy theorem, expressing $\THH$ in terms of algebraic $K$-theory, where the operations are easy to understand. 

We also study analogous questions in multiplicative settings, related to the more general questions studied in the works of Wahl--Westerland, Wahl and Klamt mentioned above.  Specifically, we use these to study the endomorphism spectrum of $\THH$ viewed as a functor on $\Alg_\Oo(\Cat^\perf)$ for some (single-colored) \operad $\Oo$. The integral answer there seems to be more subtle, and we only get partial results, but we also get a full description of these endomorphisms if one suitably localizes $\THH$, e.g. rationally. The following is our second main result, where $L$ denotes either rationalization, or $T(n)$-localization for some height $n\geq 1$ and implicit prime $p$ - here, $\gamma_k$ denotes the (unique up to conjugacy) length $k$ cycle in $\Sigma_k$, and $\Oo(k)^{\gamma_k}$ is the space of $\Z$-fixed points of $\Oo(k)$ with automorphism given by the action of $\gamma_k$ (it retains a $C_k$-action, where $C_k$ is the centralizer of $\gamma_k$ in $\Sigma_k$, in this case the subgroup generated by $\gamma_k$):
\begin{thmx}\label{thm:opns}
    
For any single-colored \operad $\Oo$, there is a canonical map $$\bigoplus_{k\geq 1} \Sph[(\Oo(k)^{\gamma_k} \times S^1)_{hC_k}]\to \eend_{\Fun(\Alg_\Oo(\Cat^\perf),\Sp)}(\THH)$$ such that : 
\begin{enumerate}
\item For any finite set $S\subset \mathbb N_{\geq 1}$, the restriction to $\bigoplus_{k\in S}$ admits a splitting; 
\item The induced map on $L$-localization, specifically: 
$$L(\bigoplus_{k\geq 1} \Sph[(\Oo(k)^{\gamma_k} \times S^1)_{hC_k}])\to \eend_{\Fun(\Alg_\Oo(\Cat^\perf),\Sp)}(L\THH)$$ 
is an equivalence. 
\end{enumerate}

\end{thmx}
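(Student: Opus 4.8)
The plan is to reduce everything to Theorem~\ref{thm:endplain} via the Dundas--McCarthy theorem, and then to understand how the extra $\Oo$-algebra structure interacts with the resulting $K$-theoretic description. First I would set up the map: given an element of $\Oo(k)$, the cyclic-bar-construction model for $\THH$ of an $\Oo$-algebra $A$ in $\Cat^\perf$ produces, from a $\gamma_k$-fixed point of $\Oo(k)$ together with the $C_k$-action, a natural self-map of $\THH(A)$ --- this is a multiplicative ``power operation'' refinement of the $S^1$-action (the $S^1$ factor being the usual cyclic action), and taking the Borel $C_k$-quotient and summing over $k$ gives the asserted map out of $\bigoplus_{k\geq 1}\Sph[(\Oo(k)^{\gamma_k}\times S^1)_{hC_k}]$. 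The key input is that $\THH$ on $\Oo$-algebras factors through the free-forgetful adjunction well enough that a natural transformation is determined by its value on free $\Oo$-algebras, and on a free $\Oo$-algebra $\Oo(X)=\bigoplus_k (\Oo(k)\otimes X^{\otimes k})_{h\Sigma_k}$ the $\THH$ splits (after suitable completion/localization) into wedge summands indexed by $k$, each of the form $(\Oo(k)^{\gamma_k}\times(\text{something}))_{hC_k}$ --- this is exactly the ``$\THH$ of a free algebra is a free loop space / cyclic power'' computation, and is where the $\gamma_k$-fixed points and the centralizer $C_k$ enter.

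For part (1), the splitting on a finite truncation $\bigoplus_{k\in S}$: here I would argue that the summand structure on free $\Oo$-algebras is compatible enough across the diagram that one can build a one-sided inverse by projecting $\THH$ onto the part of ``weight'' in $S$; the reason this only works for finite $S$ is that the relevant (co)limits defining $\eend(\THH)$ do not commute with the infinite sum, so the full map need not be split, only its finite restrictions. Concretely, I expect to use that $\eend_{\Fun(\Alg_\Oo(\Cat^\perf),\Sp)}(\THH)$ receives a map from a product (indexed by $k$) of endomorphism spectra of the weight-$k$ pieces, and that each weight-$k$ piece's endomorphisms are computed, via Dundas--McCarthy applied to the relevant ``$k$-fold'' construction, by the same kind of $K$-theory-of-the-sphere argument as in Theorem~\ref{thm:endplain}, yielding $\Sph[(\Oo(k)^{\gamma_k}\times S^1)_{hC_k}]$ as a retract.

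For part (2), the statement that $L$-localization turns the map into an equivalence: the point is that after $L$-localizing, where $L$ is rationalization or $T(n)$-localization for $n\geq 1$, the transfer/additivity properties improve --- specifically, $L$ is smashing (or at least the relevant assembly maps become equivalences after $L$) so that $L$ commutes with the infinite direct sum, and moreover the ``higher weight'' contributions that obstruct surjectivity integrally vanish or assemble correctly. I would combine this with the $L$-local version of Theorem~\ref{thm:endplain} (which should follow from the plain case by applying $L$, since $\eend$ of an $L$-local-valued functor is computed by the same formula with $\Sph[S^1]$ replaced by $L\Sph[S^1]$) to identify both source and target with the same $L$-local spectrum. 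The main obstacle, I expect, is controlling the passage from ``determined on free $\Oo$-algebras'' to an actual computation of the endomorphism spectrum: one must show that the restriction functor $\Fun(\Alg_\Oo(\Cat^\perf),\Sp)\to\Fun(\text{free }\Oo\text{-algebras},\Sp)$ is conservative on the relevant endomorphism objects \emph{and} track the $\Sigma_k$-equivariance carefully enough that the centralizer $C_k$ and the $\gamma_k$-fixed points emerge with the correct (co)homotopy-fixed-point versus homotopy-orbit variance --- this is the step where the subtlety flagged in the introduction (that the integral answer is only partial) really lives, and where the distinction between the finite-$S$ splitting and the $L$-local equivalence is forced.
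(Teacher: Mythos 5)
Your overall architecture is the right one: reduce via the free--forgetful adjunction to $\map(\THH,\THH\circ UF)$, compute $\THH$ of a free $\Oo$-algebra as a sum of pieces indexed by cycle types (this is where $\Oo(k)^{\gamma_k}$ and the centralizer $C_k$ enter, via the free-loop-space/cyclic-power formula, which in the paper is \Cref{thm:CCRY} from \cite{CCRY} applied to the groupoid colimit $(\Oo(n)\otimes C^{\otimes n})_{h\Sigma_n}$), and then exploit chromatic vanishing to handle the higher-weight pieces. But both of the actual mechanisms you would need are missing, and in one place you appeal to a false fact.

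For part (1), ``projecting onto the weight-in-$S$ part'' gets you a retraction of $\THH\circ UF$ onto the finitely many summands, but that is not yet a splitting of the map you constructed: the domain is $\map(\THH,\THH)_{hC_k}$, whereas the relevant summand of $\map(\THH,\THH\circ UF)$ is $\map(\THH,\THH_{hC_k})$, and what has to be split is the assembly map between these. The paper's mechanism is that $\map(\THH,\THH)\simeq\Sph[S^1]$ is an \emph{induced} $S^1$-spectrum, so the norm $\map(\THH,\THH)_{hC_k}\to\map(\THH,\THH)^{hC_k}$ is an equivalence; since the assembly map $\map(\THH,\THH)_{hC_k}\to\map(\THH,\THH_{hC_k})$ followed by coassembly $\map(\THH,\THH_{hC_k})\to\map(\THH,\THH^{hC_k})\simeq\map(\THH,\THH)^{hC_k}$ is exactly that norm, the assembly map is split. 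You need this argument (or an equivalent one); ``compatible summand structure'' does not by itself produce it.

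For part (2), the claim that $T(n)$-localization is smashing is false for $n\geq 1$ (only the telescopic $L_n^f$ is smashing), so you cannot invoke that. More importantly, even granting good behaviour of $L$ on direct sums, the actual obstruction is not $L$ versus $\bigoplus$ but a sequential \emph{inverse limit} versus $\bigoplus$: via \Cref{cor:coder}, $\map(\THH,E)\simeq\lim_k\Sigma^k\tilde E(A_k)$, and $E=\THH\circ UF$ is an infinite direct sum. The two ingredients the paper uses, and which your proposal does not identify, are (a) Kuhn's theorem that in a $T(n)$-local target there are no nonzero maps from a lower- to a higher-degree homogeneous functor, which kills the summands with $n(\sigma)>1$; and (b) the Heuts--Mathew uniform nilpotency result (\Cref{prop:heuts}), which says the inverse system computing the $k$-th co-Goodwillie derivative of any $m$-homogeneous functor ($m>k$) is nilpotent with an exponent depending only on $V$ and $k$, not on $m$ or the functor. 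Via \Cref{lm:sumnilp}, this uniform bound is exactly what lets the inverse limit commute with the infinite direct sum. Without (b) the argument does not close, and this is precisely the step you flag as ``the main obstacle'' but do not resolve.
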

We also prove, by studying the ``spectrum of cyclotomic Frobenii'', that the relevant map is \emph{not} an equivalence in the integral case (though this is not visible in \cite{klamt} precisely because the cyclotomic Frobenius does not exist over $\Z$). 

In particular, rationally and for $\Oo$ being the commutative operad, we find that endomorphisms of $\HH_\mathbb Q$ on stably symmetric monoidal rational \categories are simply given by $\bigoplus_{n\geq 1} \mathbb Q[S^1/C_n]$, corresponding to Loday's shuffle operations, proving that the operations Loday constructed not only extend from commutative dgas over $\mathbb Q$ to rational symmetric monoidal stable \categories, but also that in this bigger world, they are essentially the \emph{only} operations. 

Beyond the methodology of \Cref{thm:endplain} and formal nonsense, the key input for \Cref{thm:opns} is the calculation of $\THH$ of groupoid-indexed colimits from \cite{CCRY}, generalizing Blumberg-Cohen-Schlichtkrull's calculation of $\THH$ of Thom spectra. 
\subsection*{Outline}
In \Cref{section:day} we set up some preliminaries regarding Day convolution that will be convenient for the later sections. In \Cref{section:plain}, we apply these results to compute plain endomorphisms of $\THH$, and we study the relative setting in \Cref{section:endbase}. Finally, in \Cref{section:opsO} we initiate the study of operations for stable \categories equipped with multiplicative structures. 

\Cref{ch:motsplit} serves as recollections on what we call splitting invariants/motives (typically called additive motives in the literature), and \Cref{app:acttr} is used for some elementary calculations of certain cyclic group actions on specific traces in symmetric monoidal \categories. 

\subsection*{Relation to other work}
We learned that Sasha Efimov had independently discovered \Cref{thm:endplain}, with essentially the same proof \cite{youtubeSasha}, though with a slight inaccuracy in the multiplicative version of the statement, and with an added difficulty in the proof coming from staying within localizing motives throughout the proof. 

Li and Mondal also consider a similar question about de Rham cohomology in \cite{limondal}, which is closely related to Hochschild homology. At this point, the author does not know a way of relating the two sets of results precisely, though a comparison would surely be interesting. 

Finally, we point out that this paper is essentially a chapter in the author's PhD thesis \cite{Thesis}. In particular, I use a version of the Dundas-McCarthy theorem from an earlier chapter of said thesis, which has not appeared as a preprint yet, but Harpaz, Nikolaus and Saunier have also proved essentially the same version of this theorem in \cite{Victor1}, so that we will try to refer to their work whenever possible. However, there are two points where we do need the slightly stronger results from \cite{Thesis}, namely to identify more general (first) Goodwillie derivatives, and to identify certain $S^1$-actions (or more precisely, certain $C_n\subset S^1$-actions). The required results from \cite{Thesis} will appear in preprint form, hopefully soon, but since the thesis was written, I was able to answer certain questions raised therein that pertain to the relevant chapter, and will therefore need some extra time to record the proofs. 

\subsection*{Conventions}
We work with $\infty$-categories throughout, as developed by Lurie in \cite{HTT,HA}. Categorical notions (functors, subcategories etc.) are to be understood in this context unless explicitly stated. 

\begin{itemize}
    \item $\Ss$ denotes the \category of spaces\footnote{Or homotopy types, $\infty$-groupoids, anima,...}, $\Sp$ the \category of spectra, $\PrL$ that of presentable \categories and left adjoints between them, as well as its  stable variant $\PrL_\st$.$\Cat^\ext$ denotes the \category of small stable \categories and exact functors, and $\Cat^\perf$ the full subcategory thereof spanned by idempotent-complete \categories. We let $\Fun^\ext$ denote the \category of exact functors. 

\item Throughout, $\Map$ denotes mapping spaces, while $\map$ is reserved for mapping spectra in stable \categories, and similarly for $\End$ and $\eend$. We use $\hom$ for internal homs in closed monoidal \categories. 
    \item We recall in \Cref{ch:motsplit} our conventions regarding localizing invariants and splitting\footnote{More often called ``additive''.} invariants. 
    \item The symbol $K$ denotes nonconnective algebraic $K$-theory, while $K^\cn$ denotes its connective variant. Note that, unlike in \cite{BGT13}, we do not require the latter to be invariant under idempotent-completion, and in particular, $K^\cn$ is only the connective cover of $K$ for idempotent-complete stable \categories (otherwise it agrees with it in degrees $\geq 1$). 
     \item Given a space $X$, $LX$ denotes $\map(S^1,X)$, its free loop space.  
\end{itemize}
\subsection*{Acknowledgements}
This paper is essentially Chapter 5 of my PhD thesis, and I wish to thank Ben Antieau and Sasha Efimov for their proofreading of it,and  my advisors Jesper Grodal and Markus Land for their help and constant support throughout my PhD, as well as their enthusiasm.  

I also want to thank Vova Sosnilo and Christoph Winges - our collaboration on \cite{RSW} is what inspired me to think about other applications of the Dundas-McCarthy theorem; and Nathalie Wahl for helpful discussions surrounding her related work as well as Klamt's work on the matter. 

I've had helpful conversations about the contents of this work with Sasha Efimov, Achim Krause, Ishan Levy, Zhouhang Mao, Thomas Nikolaus, Victor Saunier and Lior Yanovski. 

Some of this work was written while I was visiting various universities in North America, and I wish to thank them and the people who invited me or housed me there for their hospitality: in chronological order, those were Mike Hopkins at Harvard University, Ben Antieau and Achim Krause at the Institute for Advanced Study, David Gepner at Johns Hopkins Univerisity, Niranjan Ramachandran at the University of Maryland, Akhil Mathew at the University of Chicago, Paul Balmer and Morgan Opie at UCLA, and Elden Elmanto at the University of Toronto. These visits were made possible by the EliteForsk Rejsestipendium I received, and I gratefully acknowledge the support of Denmark's Uddannelses- og Forskningsministeriet, as well as Jesper for getting me this grant. 

While much of this research was conducted, I was furthermore supported by the Danish National Research Foundation through the Copenhagen Centre for Geometry and Topology (DNRF151), and in the finishing stages of writing this version of the article, I am funded by the Deutsche Forschungsgemeinschaft (DFG, German Research Foundation) -- Project-ID 427320536 -- SFB 1442, as well as by Germany's Excellence Strategy EXC 2044 390685587, Mathematics Münster: Dynamics--Geometry--Structure.

Finally, thanks to q.uiver for help with the commutative diagrams. 
\newline 
\tableofcontents

\section{Day convolution of localizing invariants}\label{section:day}
Since $\THH$ admits a multiplication, one way to compute endomorphisms of $\THH$ is to compute $\THH$-linear maps out of ``$\THH\otimes\THH$'' into $\THH$, for some meaning of $\otimes$. The relevant meaning here is the one that encodes the symmetric monoidal structure on the functor $\THH$, as opposed to that on each of its values\footnote{Indeed, on $\THH$ of an arbitrary stable \category, there is no such structure !}. The relevant tensor product for this is Day convolution, which is known to encode symmetric monoidal structures. We will not enter into too many details about it, but there is also a version of the Day tensor product for localizing invariants, which is better suited for our purposes. 

The goal of this section is therefore to prove the following, which will be the key tool to study endomorphisms of $\THH$: 
\begin{prop}\label{prop:P1astensorTHH}
    Let $\E$ be a cocomplete stable \category and $E:\Cat^\perf\to \E$ be an accessible localizing invariant. The Day convolution \emph{as localizing invariants} $\THH\otimes^\Day E$ of $\THH$ with $E$ is naturally equivalent to $C\mapsto P_1E^{\mathrm{lace}}(C,\id_C)$.
\end{prop}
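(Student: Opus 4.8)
The plan is to compute the Day convolution by reducing $\THH$ to algebraic $K$-theory via the Dundas--McCarthy theorem, exploiting the fact that Day convolution of localizing invariants is built from a left Kan extension, and then recognizing the resulting formula as a first Goodwillie derivative.

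First I would recall the construction of the Day tensor product $E_1\otimes^\Day E_2$ of localizing invariants: it is the localizing-invariant-valued functor obtained by taking the ordinary (pointwise) left Kan extension of the external product $(C_1,C_2)\mapsto E_1(C_1)\otimes E_2(C_2)$ along $\otimes\colon \Cat^\perf\times\Cat^\perf\to\Cat^\perf$, and then applying the localization (``motivic'' or ``additive'' localization) that forces the result to be a localizing invariant — or, what amounts to the same since everything in sight is already a localizing invariant in each variable, by computing the appropriate colimit internal to localizing invariants. The key structural input is that algebraic $K$-theory is the unit for this Day tensor product on (accessible) localizing invariants; this is a consequence of the universal property of $K$-theory among localizing invariants (Blumberg--Gepner--Tabuada), packaged as a statement that $K\otimes^\Day E\simeq E$ naturally.

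Second, I would invoke the Dundas--McCarthy theorem in the form available from the earlier chapter of the thesis (or Harpaz--Nikolaus--Saunier): $\THH(C)\simeq \partial_1 K^{\cn}(C\oplus\,\text{-})$, or more precisely the statement that $\THH$ arises as a first Goodwillie derivative of the $K$-theory of a parametrized/square-zero-type construction; in the ``lace'' formulation this reads as $\THH(C)\simeq P_1\bigl(K^{\mathrm{lace}}(C,\id_C)\bigr)$ in the appropriate variable, where $\Lace$ encodes the data of an endomorphism. The point is that $\THH$ is, as a localizing invariant, $\partial_1$ of $K$ applied to the laced category. Then $\THH\otimes^\Day E$ becomes the Day convolution of $(\partial_1 K)$-applied-to-laces with $E$; since Day convolution (being a colimit/left Kan extension) commutes with the colimit-preserving operation $P_1$ of taking the first derivative in the relevant variable, and since $K$ is the unit, one gets $\THH\otimes^\Day E\simeq P_1\bigl(E^{\mathrm{lace}}(C,\id_C)\bigr)$, which is exactly the claimed formula. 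Concretely: $\THH\otimes^\Day E = (P_1 K^{\Lace})\otimes^\Day E \simeq P_1(K^{\Lace}\otimes^\Day E) \simeq P_1(E^{\Lace})$, using that $\Lace$ of the tensor is computed factor-wise and $K$ is the Day unit.

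The main obstacle I expect is making the interchange of $P_1$ (first Goodwillie derivative in the endomorphism/``lace'' variable) with the Day convolution completely rigorous: one must check that $P_1$ is computed by a filtered colimit that commutes with the relevant left Kan extension, and that the laced-category construction $C\mapsto (C,\id_C)$ interacts correctly with $\otimes$ — i.e. that $\Lace$ applied after $\otimes$ agrees with applying $\Lace$ before and then using that $K(A\otimes B)$-type multiplicativity degenerates appropriately after passing to derivatives. A secondary subtlety is the accessibility hypothesis on $E$: it is needed so that the Day convolution of localizing invariants exists and is itself accessible, and so that $P_1$ (a sequential colimit of cross-effects) is well-behaved; I would track this hypothesis carefully through each step. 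Modulo these points — all of which are handled by the preliminaries on Day convolution set up earlier in this section together with the cited form of Dundas--McCarthy — the identification is formal.
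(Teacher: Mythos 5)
Your overall strategy is in the right spirit—reduce to Dundas--McCarthy, compute the Day convolution, recognize a Goodwillie derivative—but the crux step is not justified by the argument you give, and the way you have stated it does not quite typecheck.

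The step $K^{\Lace}\otimes^\Day E \simeq E^{\Lace}$ is the heart of the computation, and you attribute it to ``$K$ is the Day unit''. That cannot be the whole story: $K^{\Lace}(-,\Sigma^n)$ is $K(\Fun^\ext(A_n,-))$, which is $K$ precomposed with the endofunctor $\Fun^\ext(A_n,-)$, and this endofunctor is \emph{not} symmetric monoidal (nor lax monoidal in a way that would help), so the unit property of $K$ does not transport across it. What the paper actually uses is a stronger statement, a ``corepresentability'' lemma (Corollary \ref{cor:stableDay} together with Corollary \ref{cor:internalhominmot}): for a compact $A\in\Cat^\ext$, viewed in $\Mot_\add$, the Day convolution of the corepresentable $\map(\mathcal U_\add(A),-)\simeq K^\cn(\Fun^\ext(A,-))$ with any $E$ is $E(\hom(\mathcal U_\add(A),-)) \simeq E(\Fun^\ext(A,-))$. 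Your ``unit'' argument is exactly the special case $A=\Sp^\omega$, which is too weak. So the move $(P_1 K^{\Lace})\otimes^\Day E\simeq P_1(K^{\Lace}\otimes^\Day E)\simeq P_1 E^{\Lace}$ needs the full corepresentable lemma applied to each $A_n$, which is what Corollaries \ref{cor:presTHHsplit}--\ref{cor:thhasP1add} do.

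The commutation of $P_1$ with Day convolution, which you rightly flag as the main obstacle, is sidestepped in the paper rather than proved directly: since Day convolution is a left Kan extension (hence a colimit), and $\THH$ is exhibited as a sequential colimit $\colim_n\Omega^n\map(\tilde{\mathcal A}_n,-)$ of shifts of corepresentables on $\Mot_\add$, the Day convolution passes through this colimit term by term. One does not literally commute a Goodwillie derivative with a Kan extension; one instead rewrites $\THH$ so that the derivative is already written out as a colimit before taking the convolution.

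Finally, the statement is about Day convolution \emph{as localizing invariants}, while the preparatory lemmas naturally live at the level of splitting invariants / $\Mot_\add$. Your proposal mentions ``applying the localization'' in passing, but this step is the actual content of the paper's proof of the proposition: $\Mot_\add\to\Mot_\loc$ is a localization, so the splitting-invariant Day convolution coincides with the localizing-invariant one \emph{provided the result is already a localizing invariant}, and the paper invokes \cite[Proposition 5.15]{Victor1} to know that $P_1 E^{\mathrm{lace}}$ is localizing. Without that ingredient the argument does not close.
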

Here $E^{\mathrm{lace}}$ is defined in \cite[Definition 3.10]{Victor1} (simply replace $K$ by $E$ in that definition), and it is what is denoted $E^\cyc$ in \cite{Thesis}. 
\begin{rmk}
We stress that the usual Day convolution of $\THH$ and $E$ need not be a localizing invariant. What we mean by ``the Day convolution \emph{as localizing invariants}''  is what is obtained by first taking ordinary Day convolution and then taking the universal localizing invariant with a map from this Day convolution. Through the \category of localizing motives $\Mot_\loc$, we may view this as ``exactified Day convolution'', as discussed in \Cref{cons:exactDay}.     
\end{rmk}
\begin{rmk}\label{rmk:TrThyconvolution}
    One can state and prove a refined version of this result - implying in particular an $S^1$-equivariant version of it, as both sides have a natural $S^1$-action. However, it requires setting up more technicalities, more than seem worth it for our purposes, so we leave the refined statement for the future. 
\end{rmk}

To prove this, we use the following preparatory lemma: 
\begin{lm}
Let $C$ be a presentably symmetric monoidal \category, and let $x\in C$. For any accessible\footnote{Here accessibility is just used to deal with size issues, namely to make sure that the Day tensor product is defined \emph{a priori}.} functor $F: C\to \E$ to a cocomplete \category $\E$, there is a natural equivalence $$\Map(x,-)\otimes^\Day F \simeq F(\hom(x,-))$$ 
\end{lm}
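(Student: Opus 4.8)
The plan is to unwind Day convolution as a (large) coend and reduce it to two applications of the co-Yoneda lemma, using the tensor--hom adjunction in $C$ to slide the corepresentable across the $\otimes$. First observe that since $C$ is presentably symmetric monoidal, $(-)\otimes x$ preserves colimits, hence admits a right adjoint $\hom(x,-)$, so the right-hand side is well defined.

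Recall that for $G\colon C\to\Ss$ and an accessible $F\colon C\to\E$ into a cocomplete $\E$, Day convolution is the left Kan extension along $\otimes$, computed by the coend
$$(G\otimes^\Day F)(c)\;\simeq\;\int^{c_1,c_2\in C}\Map_C(c_1\otimes c_2,c)\otimes G(c_1)\otimes F(c_2),$$
where $S\otimes e$ denotes the copower of $e\in\E$ by $S\in\Ss$; accessibility of $F$ is precisely what guarantees this large coend exists. I would take $G=\Map_C(x,-)$ and integrate over $c_1$ first: using $\Map_C(c_1\otimes c_2,c)\simeq\Map_C(c_1,\hom(c_2,c))$ and the co-Yoneda identity $\int^{c_1}\Map_C(c_1,a)\otimes\Map_C(x,c_1)\simeq\Map_C(x,a)$, this yields
$$(\Map_C(x,-)\otimes^\Day F)(c)\;\simeq\;\int^{c_2\in C}\Map_C(x,\hom(c_2,c))\otimes F(c_2).$$
Next, by the adjunction and symmetry of $\otimes$, $\Map_C(x,\hom(c_2,c))\simeq\Map_C(x\otimes c_2,c)\simeq\Map_C(c_2\otimes x,c)\simeq\Map_C(c_2,\hom(x,c))$, so the right-hand side becomes $\int^{c_2}\Map_C(c_2,\hom(x,c))\otimes F(c_2)$. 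Finally this coend is the colimit of $F$ over the slice $C_{/\hom(x,c)}$, which has a terminal object, hence equals $F(\hom(x,c))$ --- and this last step requires no hypothesis on $F$ at all. Stringing the equivalences together gives the pointwise equivalence; one then checks it is natural in $c$ (each rewriting is natural, being built from the hom-adjunction and co-Yoneda) and in $F$.

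The main difficulty is bookkeeping rather than conceptual: one must check that the intermediate large coends are legitimate and that Fubini and co-Yoneda apply at this size --- but this is no worse than what is already needed to set up Day convolution for accessible functors, so I would simply appeal to the ambient framework. An alternative that avoids explicit coend juggling: both $\Map_C(x,-)\otimes^\Day(-)$ and $(-)\circ\hom(x,-)$ are colimit-preserving and $\Fun(C,\Ss)$-linear endofunctors of $\Fun(C,\E)$ --- linearity of the second is the associativity-type identity $(G\otimes^\Day F)\circ\hom(x,-)\simeq G\otimes^\Day(F\circ\hom(x,-))$, itself a short coend computation --- and they agree on the generating objects $\Map_C(\one_C,-)\otimes e$ for $e\in\E$, since $\Map_C(x,-)\otimes^\Day(\Map_C(\one_C,-)\otimes e)\simeq\Map_C(x,-)\otimes e$ and $(\Map_C(\one_C,-)\otimes e)\circ\hom(x,-)\simeq\Map_C(x,-)\otimes e$; as $\Fun(C,\E)$ is generated under colimits and the $\Fun(C,\Ss)$-action by these objects, the two functors coincide. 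I would present the coend computation as the main line and mention this variant as a remark.
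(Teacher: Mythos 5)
Your coend computation is correct, but it is a genuinely different route from the paper's. The paper never unwinds Day convolution into a coend: it computes $\Map(\Map(x,-)\otimes^{\Day} F,\,G)$ for an arbitrary test functor $G$, using only the defining universal property of Day convolution as a left Kan extension along $\mu\colon C\times C\to C$, then applies currying, the (ordinary) Yoneda lemma for $\Map(x,-)$, and the $\otimes\dashv\hom$ adjunction to land on $\Map(F(\hom(x,-)),G)$. In other words, the paper works entirely with mapping spaces out of both sides and invokes Yoneda once, whereas you compute the left-hand side directly as an object, integrating $c_1$ and $c_2$ out by co-Yoneda. The trade-offs: your argument is more explicit and makes the pointwise formula visible, but it requires you to know that the large coend formula for Day convolution on a presentable base is valid and that Fubini/co-Yoneda apply at that size --- exactly the bookkeeping you flag. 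The paper's argument sidesteps the coend formula entirely, so the only size issue is the existence of the Day convolution itself, which is the same accessibility hypothesis both of you invoke, and it dualizes more transparently to the stable/exactified variant (\Cref{cor:stableDay}), since one can just repeat the mapping-space computation with $P_1$ inserted. Your alternative ``agree on generators'' argument is also fine in spirit, though the precise generation statement for $\Fun^{\mathrm{acc}}(C,\E)$ under colimits by objects $\Map_C(c,-)\otimes e$ would need a line of justification; as you say, it's best left as a remark.
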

\begin{proof}
Let $G$ be any functor $C\to \E$, and let $\mu: C\times C\to C$ denote the tensor product, $\mathrm{pr}_i: C\times C\to C$ the two projection maps. By definition of the Day tensor product, there is an equivalence $$\Map(\Map(x,-)\otimes^\Day F, G)\simeq \Map(\Map(x,\mathrm{pr}_1)\boxtimes F\circ \mathrm{pr}_2, G\circ \mu)$$ 
where we temporarily use $\boxtimes$ to distinguish the external tensor product from the Day tensor product.

By currying, this is equivalent to $$\Map(\Map(x,-), \Map(F, G\circ \mu))$$
where the target is $y\mapsto \Map(F,G(y\otimes -))$. 

By the Yoneda lemma, this is equivalent to $\Map(F,G(x\otimes -))$ and by adjunction, this is equivalent to $\Map(F(\hom(x,-)), G)$, as was to be proved. 
\end{proof}
\begin{cons}\label{cons:exactDay}
    If $C, \E$ are stable and $F,G$ are exact functors, $F\otimes^\Day G$ need not be exact a priori. However, one can then take its first derivative to make it exact, and this induces a Day convolution monoidal structure on $\Fun^{\ext,\mathrm{acc}}(C,\E)$ - see e.g. \cite[Theorem 4.5]{horel} in the case where the source is small, but the generalization to accessible functors is straightforward. We call it the \emph{exactified} Day convolution. 
\end{cons}
In that case, there is an analogue of the previous lemma when we consider instead the mapping \emph{spectrum} out of $x$, and the exactified Day convolution. The proof is the same\footnote{Or in fact it can be seen to follow from the previous lemma, as $\map(x,-) = P_1\Sph[\Map(x,-)]$.}, so we only state it:

\begin{cor}\label{cor:stableDay}
    Let $C$ be a stable presentably symmetric monoidal \category, and $x\in C$ be an object admitting internal homs $\hom(x,-)$. For any accessible exact functor $F:~C\to~\E$ to a cocomplete \emph{stable} \category $\E$, there is a natural equivalence $\map(x,-)\otimes^\Day~F\simeq~F(\hom(x,-))$, where now $\map$ denotes the mapping spectrum functor, and $\otimes^\Day$ denotes the exactified Day convolution product.
\end{cor}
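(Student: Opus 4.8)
The plan is to deduce \Cref{cor:stableDay} from the preparatory Lemma above, exactly along the lines suggested in the footnote: the mapping-spectrum functor is the exactification of the (unstable) mapping-space functor, i.e. $\map(x,-)\simeq P_1\Sph[\Map(x,-)]$, where $\Sph[\Map(x,-)]$ denotes $\Sigma^\infty_+\circ\Map_C(x,-)$ and $P_1$ is the exactification (first Goodwillie derivative) of \Cref{cons:exactDay}. Granting this, together with the fact that $P_1$ is symmetric monoidal from ordinary to exactified Day convolution, the Corollary becomes a short formal computation, so essentially all the work has already been done once one accepts \Cref{cons:exactDay} in the accessible setting.

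First I would record the identity $\map(x,-)\simeq P_1\Sph[\Map(x,-)]$. Since $C$ is stable, $\Map_C(x,y)=\Omega^\infty\map_C(x,y)$ naturally in $y$, so $\Sph[\Map_C(x,-)]$ is the composite of the exact functor $\map_C(x,-)\colon C\to\Sp$ with $\Sigma^\infty_+\Omega^\infty\colon\Sp\to\Sp$. The functor $\Sigma^\infty_+\Omega^\infty$ is not reduced, but $\Map_C(x,-)$ carries the natural zero-map basepoint; passing to the associated reduced functor (equivalently, replacing $\Sigma^\infty_+$ by $\Sigma^\infty$) and using that $\Sigma^\infty\Omega^\infty$ has first derivative the identity, together with exactness of $\map_C(x,-)$ (so that $P_1$ of the composite is $P_1$ of the outer functor precomposed with $\map_C(x,-)$), yields that the exactification of $\Sph[\Map_C(x,-)]$ is $\map_C(x,-)$.

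Next I would observe that the ordinary Day convolution $\Sph[\Map_C(x,-)]\otimes^\Day F$ --- formed using the tensoring of $\E$ over $\Sp$ --- agrees with the ordinary, $\Ss$-enriched Day convolution $\Map_C(x,-)\otimes^\Day F$ appearing in the preparatory Lemma: both are the left Kan extension along the tensor-product functor $\mu\colon C\times C\to C$ of $(a,b)\mapsto\Map_C(x,a)\otimes F(b)$, since $\Sigma^\infty_+Y\otimes_\Sp e=(\colim_Y\Sph)\otimes_\Sp e=\colim_Y e=Y\otimes_\Ss e$ for $Y\in\Ss$ and $e\in\E$. By the preparatory Lemma this common value is $F(\hom(x,-))$, which is exact and accessible, because $F$ is and because $\hom(x,-)\colon C\to C$, being a right adjoint between presentable stable \categories, is exact and accessible; hence $F(\hom(x,-))$ is its own exactification. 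Now assemble: using that $P_1$ is symmetric monoidal from $\otimes^\Day$ to the exactified $\otimes^\Day$, and that $F=P_1F$,
\[
\map(x,-)\otimes^\Day F\ \simeq\ P_1(\Sph[\Map_C(x,-)])\otimes^\Day P_1(F)\ \simeq\ P_1(\Sph[\Map_C(x,-)]\otimes^\Day F)\ \simeq\ P_1(F(\hom(x,-)))\ \simeq\ F(\hom(x,-)),
\]
naturally, which is the claim.

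Alternatively, as the author notes, one may simply rerun the proof of the preparatory Lemma verbatim with mapping spectra in place of mapping spaces and the exactified Day convolution in place of the ordinary one; the only point needing attention there is the final ``co-Yoneda'' step, since $\map_C(x,-)$ does \emph{not} corepresent evaluation at $x$ inside $\Fun(C,\Sp)$ in the way $\Map_C(x,-)$ does inside $\Fun(C,\Ss)$ --- one must either pass to $\Fun^{\ext,\mathrm{acc}}$ and invoke the universal property of exactification, or short-circuit this exactly via the identity of the previous paragraphs. For this reason I expect the only genuine (and minor) obstacle to be organizational: verifying that the exactified Day convolution of \Cref{cons:exactDay} really is a symmetric monoidal localization of ordinary Day convolution in the \emph{accessible}, not just small, setting, so that the displayed chain of equivalences is legitimate. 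Everything else is routine.
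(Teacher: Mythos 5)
Your proposal is correct and follows precisely the two routes the paper itself indicates: the footnote route (via $\map(x,-)\simeq P_1\Sph[\Map(x,-)]$ and monoidality of $P_1$), which you develop carefully, and the ``same proof'' route (rerunning the preparatory lemma with mapping spectra and exactified Day convolution, using the stable Yoneda lemma in $\Fun^\ext$), which you sketch at the end. The technical caveat you flag about the exactified Day convolution being a symmetric monoidal localization in the accessible setting is already implicitly assumed by \Cref{cons:exactDay}, so you are not introducing any new gap; the verification of $P_1\Sph[\Map(x,-)]\simeq\map(x,-)$ via the splitting $\Sigma^\infty_+\Omega^\infty\simeq\Sph\oplus\Sigma^\infty\Omega^\infty$ and the chain rule for $P_1$ along the exact functor $\map(x,-)$ is exactly right.
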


To apply this to $\THH$, we need to recall a modern version of the Dundas-McCarthy theorem \cite{DM}. Our reference for this is \cite{Victor1}, though a variant can also be found in the author's thesis manuscript, \cite{Thesis}\footnote{Though until that is available as a preprint, we will stick to \cite{Victor1}.}. To state it, we introduce a bit of notation. 

Notice that for $C\in\Cat^\perf$, $\Lace(C,\Sigma^n)$ can be described as $\Fun^\ext(A_n,C)$ for some $A_n\in\Cat^\perf$, where $\Lace$ is as in \cite{Victor1}. Specifically, $A_n$ is easily verified to be $\Perf(\Sph[\sigma_{-n}])$, where $\Sph[\sigma_{-n}]$ is the free associative algebra on a class in degree $-n$. Indeed, a map $x\to \Sigma^nx$ is the same as a map $\Omega^n x\to x$, which is the same as a map $\Sph[\sigma_{-n}]\to \map(x,x)$. 
\begin{nota}
    Let $A_n :=\Perf(\Sph[\sigma_{-n}])$ as discussed above. 
\end{nota}
With this notation, we can state \cite[Theorem 4.42]{Victor1} as follows:
\begin{cor}\label{cor:presTHH}
    There is a natural equivalence $$\THH\simeq\colim_n \Omega^n \tilde K(\Fun^\ext(A_n, -))$$
\end{cor}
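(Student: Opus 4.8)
Since this corollary is presented as a reformulation of \cite[Theorem 4.42]{Victor1}, the plan is to take that theorem as a black box and transport it across the identification $\Lace(C,\Sigma^n)\simeq\Fun^\ext(A_n,C)$ set up above. First I would recall the precise input: \cite[Theorem 4.42]{Victor1} provides a natural equivalence $\THH\simeq\colim_n\Omega^n\tilde K(\Lace(-,\Sigma^n))$, where $\Lace(C,\Sigma^n)$ is the stable \category of pairs $(x,f\colon x\to\Sigma^n x)$ with $x\in C$ (an ``$n$-shifted self-map'', or lace), functorial in $C$ by postcomposition with $f$, and $\tilde K$ is reduced (nonconnective) $K$-theory, i.e. the fiber of the map $K(\Lace(-,\Sigma^n))\to K$ induced by $(x,f)\mapsto x$ --- a map split by the zero section $x\mapsto(x,0)$; the colimit runs over $n$ along the natural structure maps.

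Second, I would make the equivalence $\Lace(C,\Sigma^n)\simeq\Fun^\ext(A_n,C)$ explicit and visibly natural in $C$, which is essentially the content of the Notation above. By the universal property of $\Perf$ of an $\EE_1$-ring, an exact functor $A_n=\Perf(\Sph[\sigma_{-n}])\to C$ into an idempotent-complete stable \category amounts to its value $x$ on the free generator together with an $\EE_1$-algebra map $\Sph[\sigma_{-n}]\to\map_C(x,x)$; since $\Sph[\sigma_{-n}]$ is free on a class in degree $-n$, such a map is merely a map of spectra $\Sigma^{-n}\Sph\to\map_C(x,x)$, i.e. adjointly a map $x\to\Sigma^n x$. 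Hence $\Fun^\ext(A_n,-)$ corepresents the lace functor, giving the natural equivalence. One should be mildly careful that $\Sph[\sigma_{-n}]$ is non-connective for $n\geq 1$, so I would invoke the universal property of $\Perf(R)$ in the form valid for an arbitrary $\EE_1$-ring $R$ (exact functors $\Perf(R)\to C$ into idempotent-complete stable $C$ are $R$-module objects of $C$ generated by a single object) rather than any connective-specific version.

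Third, I need to check that the equivalence intertwines the auxiliary maps defining $\tilde K$. Under it, the augmentation $(x,f)\mapsto x$ corresponds to restriction along the unit $\Sph\to\Sph[\sigma_{-n}]$, i.e. to $\Fun^\ext(A_n,C)\to\Fun^\ext(\Perf(\Sph),C)=C$, while the zero section $x\mapsto(x,0)$ corresponds to restriction along the augmentation $\Sph[\sigma_{-n}]\to\Sph$; these are compatible because the composite $\Sph\to\Sph[\sigma_{-n}]\to\Sph$ is the identity. Therefore $\tilde K(\Lace(-,\Sigma^n))\simeq\tilde K(\Fun^\ext(A_n,-))$ as functors $\Cat^\perf\to\Sp$, with $\tilde K$ on the right denoting the fiber of $K(\Fun^\ext(A_n,-))\to K$. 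Substituting into the equivalence of the first step yields $\THH\simeq\colim_n\Omega^n\tilde K(\Fun^\ext(A_n,-))$, as claimed; I would also confirm that the transition maps of the colimit are transported correctly --- i.e. that the bonding maps $A_{n+1}\to A_n$ underlying them classify the ``suspension of a lace'' operation --- but this is forced by naturality once the indexing is matched.

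The main obstacle is not mathematical depth but faithfulness of translation: the substance lies entirely in \cite[Theorem 4.42]{Victor1}, and the only genuine task is to verify that \emph{their} formulations of the lace \category, of reduced $K$-theory, and of the suspension/structure maps agree on the nose with the $\Perf(\Sph[\sigma_{-n}])$-picture used here, modulo the small non-connectivity caveat noted above. If \cite{Victor1} already phrases Theorem 4.42 in terms of $\Fun^\ext(A_n,-)$ or an evidently equivalent gadget, there is essentially nothing left to prove.
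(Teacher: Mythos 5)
Your proposal matches the paper's treatment exactly: the corollary is presented there as a direct restatement of \cite[Theorem 4.42]{Victor1}, justified by the same identification $\Lace(C,\Sigma^n)\simeq\Fun^\ext(A_n,C)$ via the adjunction between maps $x\to\Sigma^n x$, maps $\Omega^n x\to x$, and algebra maps $\Sph[\sigma_{-n}]\to\map(x,x)$. Your extra care about the augmentation/zero-section compatibility and the transition maps is sound but goes beyond what the paper records.
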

\begin{rmk}\label{rmk:robustDM}
    Since We are taking iterated loop spaces, for any fixed $k$, the $k$th homotopy group of the right hand side only depends on the connective part of $K$-theory for $n$ large enough, and hence in the colimit. Thus this corollary is independent of whether we consider connective or nonconnective $K$-theory. 
\end{rmk}
If we rephrase it by viewing $\THH$ as a functor on $\Mot_\add$, the \category of splitting motives\footnote{These are typically called ``additive motives'' in the literature. This, especially the corresponding notion of ``additive invariant'' sounds very confusing and less descriptive than ``splitting'', so we have opted for this change of name. We have gathered a few helpful facts in \Cref{ch:motsplit}.}, we can interpret it as follows with  a direct application of \Cref{cor:mapinmot}:
\begin{cor}\label{cor:presTHHsplit}
    Considering $\THH$ as a functor on $\Mot_\add$, letting $\tilde{\mathcal A}_n$ denote $\mathcal U_\add(A_n)/\mathcal U_\add(Sp^\omega)$, we have:
    $$\THH \simeq \colim_n \Omega^n\map(\tilde{\mathcal A}_n,-)$$
\end{cor}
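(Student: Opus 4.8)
The plan is to deduce this from \Cref{cor:presTHH} by transporting the presentation through the functor $\mathcal U_\add\colon \Cat^\perf\to \Mot_\add$. The key point is that $\THH$, as a splitting invariant, factors through $\Mot_\add$, so we may regard the right-hand side of \Cref{cor:presTHH} as computed after applying $\mathcal U_\add$; what we need is an identification of the functor $C\mapsto \tilde K(\Fun^\ext(A_n,-))$ on $\Mot_\add$ with a corepresentable functor $\map(\tilde{\mathcal A}_n,-)$. First I would recall that nonconnective (or connective, by \Cref{rmk:robustDM}) $K$-theory, viewed as a functor on $\Mot_\add$, is corepresented by the unit $\mathcal U_\add(\Sph)$, i.e. $K(\mathcal C)\simeq \map_{\Mot_\add}(\mathcal U_\add(\Sph^\omega),\mathcal C)$, and more generally by the projection formula/adjunction for $\Fun^\ext(A_n,-)$ this gives $K(\Fun^\ext(A_n,C))\simeq \map_{\Mot_\add}(\mathcal U_\add(A_n), \mathcal U_\add(C))$. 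This is the content of \Cref{cor:mapinmot} that the statement already points to.

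Next I would deal with the decoration ``$\tilde K$'' versus ``$K$''. The reduced variant $\tilde K$ subtracts off the $K$-theory of the ``basepoint'' copy of $\Sph^\omega$ sitting inside $A_n$ (coming from the augmentation $\Sph[\sigma_{-n}]\to\Sph$), so that $\tilde K(\Fun^\ext(A_n,C))$ is the cofiber of $K(\Fun^\ext(\Sph^\omega,C))=K(C)\to K(\Fun^\ext(A_n,C))$ induced by that augmentation. Under the corepresentability identification of the previous paragraph, this cofiber is exactly $\map_{\Mot_\add}(\mathrm{cofib}(\mathcal U_\add(\Sph^\omega)\to \mathcal U_\add(A_n)), \mathcal U_\add(C))$, i.e. $\map(\tilde{\mathcal A}_n, \mathcal U_\add(C))$ with $\tilde{\mathcal A}_n = \mathcal U_\add(A_n)/\mathcal U_\add(\Sph^\omega)$ exactly as in the statement (note $Sp^\omega = \Perf(\Sph) = \Sph^\omega$ in this notation). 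Here I would be careful that the splitting of $K(A_n)$ off of $K(\Fun^\ext(A_n,C))$ is compatible with the $n$-indexed diagram and with the loop functors $\Omega^n$; this is where one needs the maps $A_n\to A_{n+1}$ (or rather the structure maps of the colimit in \Cref{cor:presTHH}) to be tracked through $\mathcal U_\add$, but since $\mathcal U_\add$ is a functor this is automatic once the presentation in \Cref{cor:presTHH} is itself natural in $n$.

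Finally I would assemble: applying $\Omega^n$ to the corepresentable functors (which commutes with $\map$ out of a fixed object, by stability and since $\mathcal U_\add(C)$ does not depend on $n$) and taking the colimit over $n$ yields
$$\THH\simeq \colim_n \Omega^n \tilde K(\Fun^\ext(A_n,-))\simeq \colim_n \Omega^n\map(\tilde{\mathcal A}_n,-)$$
as functors on $\Mot_\add$, which is the claim. I expect the only genuinely non-formal ingredient to be the corepresentability of $K$-theory on $\Mot_\add$ together with the identification $K(\Fun^\ext(A,-))\simeq \map(\mathcal U_\add(A),-)$ — that is, \Cref{cor:mapinmot} — and everything else (handling $\tilde{(-)}$, commuting $\Omega^n$ past $\map$, taking the colimit, and the passage from $\Cat^\perf$ to $\Mot_\add$) is bookkeeping that follows formally from functoriality of $\mathcal U_\add$ and the universal property of splitting motives.
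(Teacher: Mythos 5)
Your proposal is correct and follows exactly the route the paper takes: the paper's proof is precisely ``apply \Cref{cor:mapinmot} to the presentation in \Cref{cor:presTHH}'', and your write-out of that application — connective-vs-nonconnective handled by \Cref{rmk:robustDM}, the reduction $\tilde K$ translated into the cofiber $\tilde{\mathcal A}_n = \mathcal U_\add(A_n)/\mathcal U_\add(\Sp^\omega)$ by exactness of $\map(-,\mathcal U_\add(C))$, and the passage of $\Omega^n$ and the colimit through the corepresentability — is the intended argument. The one small slip is the phrasing suggesting nonconnective $K$ is corepresented on $\Mot_\add$ (it is $K^\cn$ that is, with \Cref{rmk:robustDM} then letting you substitute it into the colimit), but you invoke the right remark to fix this, so there is no real gap.
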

Thus, directly using the formula from \Cref{cor:stableDay}, we find:
\begin{cor}\label{cor:thhasP1add}
    Let $E:\Mot_\add\to \E$ be an exact accessible functor to a cocomplete stable \category. With exactified Day convolution, we have an equivalence: 
    $$\THH\otimes^\Day E\simeq P_1E^{\mathrm{lace}}$$
\end{cor}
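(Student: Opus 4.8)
The plan is to combine \Cref{cor:stableDay} with \Cref{cor:presTHHsplit}. First I would recall from \Cref{cor:presTHHsplit} that, viewed as a functor on $\Mot_\add$, we have $\THH \simeq \colim_n \Omega^n \map(\tilde{\mathcal A}_n,-)$, where $\tilde{\mathcal A}_n = \mathcal U_\add(A_n)/\mathcal U_\add(\Sp^\omega)$. Since Day convolution is a colimit-preserving operation in each variable (the exactified Day convolution is by construction the $P_1$-localization of an ordinary Day convolution, and both $P_1$ and ordinary Day convolution commute with colimits), I can pull the colimit and the loop functors out: $\THH \otimes^\Day E \simeq \colim_n \Omega^n\bigl(\map(\tilde{\mathcal A}_n,-)\otimes^\Day E\bigr)$. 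Here I should be a little careful that $\Omega^n$ on the functor category is computed pointwise and hence commutes with $\otimes^\Day E$ as well, which is immediate since it is a (co)limit.

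Next I would apply \Cref{cor:stableDay} with $C = \Mot_\add$, $x = \tilde{\mathcal A}_n$, and $F = E$. For this I need to know that $\tilde{\mathcal A}_n$ admits an internal hom in the presentably symmetric monoidal \category $\Mot_\add$; this holds because $\tilde{\mathcal A}_n$ is a compact (indeed dualizable, being built from $\mathcal U_\add$ of a dualizable object of $\Cat^\perf$) object, so $\hom(\tilde{\mathcal A}_n,-)$ exists. The corollary then gives $\map(\tilde{\mathcal A}_n,-)\otimes^\Day E \simeq E(\hom(\tilde{\mathcal A}_n,-))$. Substituting back, $\THH\otimes^\Day E \simeq \colim_n \Omega^n E(\hom(\tilde{\mathcal A}_n,-))$.

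Finally I would identify $\colim_n \Omega^n E(\hom(\tilde{\mathcal A}_n,-))$ with $P_1 E^{\mathrm{lace}}$. The key point is to recognize $\hom(\tilde{\mathcal A}_n,C)$: by the cofiber-sequence definition $\tilde{\mathcal A}_n = \mathcal U_\add(A_n)/\mathcal U_\add(\Sp^\omega)$ together with the fact that $\map(\mathcal U_\add(A_n),-)$ corepresents $\Fun^\ext(A_n,-) = \Lace(-,\Sigma^n)$ (this is exactly the discussion preceding the notation $A_n$, and the content of \Cref{cor:presTHHsplit}), the internal hom $\hom(\tilde{\mathcal A}_n,-)$ is the object of $\Mot_\add$ whose $E$-value assembles the reduced/relative version of $E$ on $\Lace(C,\Sigma^n)$ — this is precisely the $n$-th term appearing in the definition of $E^{\mathrm{lace}}$ in \cite[Definition 3.10]{Victor1}, so that $\colim_n \Omega^n E(\hom(\tilde{\mathcal A}_n,-)) \simeq E^{\mathrm{lace}}$, up to the $P_1$ that is forced on us because we are working with the exactified Day convolution — equivalently, the left-hand side $\THH\otimes^\Day E$ is exact by construction, so the identification lands in $P_1 E^{\mathrm{lace}}$. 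The main obstacle is this last bookkeeping step: matching the internal-hom description coming out of \Cref{cor:stableDay} with Saunier's explicit $E^{\mathrm{lace}}$ construction, and correctly tracking where the $P_1$ enters (it should come for free from exactifying, but one should double-check that $P_1$ does not need to be applied termwise before taking the colimit, which is fine since $P_1$ commutes with the filtered colimit and the loop functors). The rest is formal manipulation of colimit-preserving functors.
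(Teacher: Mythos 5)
Your proposal is correct and follows essentially the same route as the paper: present $\THH$ as $\colim_n\Omega^n\map(\tilde{\mathcal A}_n,-)$, pull the colimit and loops through the exactified Day convolution, apply \Cref{cor:stableDay}, and identify the resulting internal hom. Two small points of precision worth flagging. First, the clause ``$\map(\mathcal U_\add(A_n),-)$ corepresents $\Fun^\ext(A_n,-)$'' conflates mapping spectra with internal homs: what is true is $\map(\mathcal U_\add(A_n),\mathcal U_\add(B))\simeq K^\cn(\Fun^\ext(A_n,B))$ (\Cref{cor:mapinmot}), whereas the identification you actually need is $\hom(\mathcal U_\add(A_n),\mathcal U_\add(B))\simeq\mathcal U_\add(\Fun^\ext(A_n,B))$, which is \Cref{cor:internalhominmot}; since $\tilde{\mathcal A}_n$ is a cofiber, $\hom(\tilde{\mathcal A}_n,-)$ is the complementary summand of the split map $\mathcal U_\add(\Fun^\ext(A_n,-))\to\mathcal U_\add(-)$, and applying the exact $E$ produces exactly $\tilde E(\Lace(-;\Sigma^n))$. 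Second, the hesitation at the end about ``where the $P_1$ enters'' is unfounded: $P_1E^{\mathrm{lace}}$ is \emph{by definition} $\colim_n\Omega^n\tilde E(\Lace(-;\Sigma^n))$ (this is the formula used throughout the paper, and is the meaning of the $P_1$ in the notation), so the $\colim_n\Omega^n$ coming from the presentation of $\THH$ is already the whole content of the $P_1$; no further exactification or termwise $P_1$ is needed. Also note that the compactness/dualizability remark is not needed to invoke \Cref{cor:stableDay}, since a presentably symmetric monoidal \category is automatically closed, so all objects admit internal homs; compactness of $A_n$ in $\Cat^\ext$ is what matters, and it is used when invoking \Cref{cor:internalhominmot}.
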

\begin{proof}
    Using \Cref{cor:stableDay} and \Cref{cor:presTHHsplit}, we find $$\THH\otimes^\Day E\simeq \colim_n \Omega^n \tilde E(\Fun^\ext(A_n, -))\simeq \colim_n\Omega^n \tilde E(\Lace(-;\Sigma^n)) =: P_1E^{\mathrm{lace}}$$ 
\end{proof}

\begin{proof}[Proof of \Cref{prop:P1astensorTHH}] 
We first note that the functor $\Mot_\add\to \Mot_\loc$ witnessing that every localizing invariant is a splitting invariant is a localization.

Thus, if $E,F$ are localizing invariants such that their Day convolution \emph{as splitting invariants} is already localizing, it is also their Day convolution \emph{as localizing invariants}. 

We also note that if $E$ happened to be a localizing invariant, then by \cite[Proposition 5.15]{Victor1} $P_1E^{\mathrm{lace}}$ is already localizing. Thus in this situation, the calculation from \Cref{cor:thhasP1add} works for Day convolution \emph{as splitting invariants} or \emph{as localizing invariants}, which is what we wanted to prove. 
\end{proof}
We point out another, more naive consequence of \Cref{cor:presTHH}, which can sometimes be convenient:
\begin{cor}\label{cor:coder}
    Let $E$ be a splitting invariant. There is an equivalence:$$\map(\THH,E)\simeq \lim \Sigma^n \tilde E(A_n)$$
\end{cor}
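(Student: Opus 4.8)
The plan is to apply $\map(-,E)$ to the presentation of $\THH$ by loops on corepresentable functors supplied by \Cref{cor:presTHHsplit}, and then to read off the answer via the Yoneda lemma. Throughout I would work in the $\infty$-category of splitting invariants $\Cat^\perf\to\Sp$, which by the universal property of $\mathcal{U}_\add$ recalled in \Cref{ch:motsplit} is equivalent to $\Fun^{\ext,\mathrm{acc}}(\Mot_\add,\Sp)$; under this equivalence the splitting invariant $E$ corresponds to an accessible exact functor $\Mot_\add\to\Sp$, still denoted $E$, and $\map(\THH,E)$ is the corresponding mapping spectrum.

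First I would recall from \Cref{cor:presTHHsplit} that, as an object of $\Fun^{\ext,\mathrm{acc}}(\Mot_\add,\Sp)$,
$$\THH\simeq\colim_n\Omega^n\map(\tilde{\mathcal A}_n,-),\qquad \tilde{\mathcal A}_n=\mathcal{U}_\add(A_n)/\mathcal{U}_\add(\Sp^\omega),$$
and that $\tilde{\mathcal A}_n$ is a compact object of $\Mot_\add$, being the cofiber of a map between compact objects; in particular $\map(\tilde{\mathcal A}_n,-)$ is accessible and exact, so the colimit already lives in $\Fun^{\ext,\mathrm{acc}}(\Mot_\add,\Sp)$. Applying $\map(-,E)$ turns this colimit into a limit, and, since $\Omega^n=\Sigma^{-n}$ with $\map(\Sigma^{-n}F,E)\simeq\Sigma^n\map(F,E)$ by stability, I obtain
$$\map(\THH,E)\simeq\lim_n\Sigma^n\,\map\!\big(\map(\tilde{\mathcal A}_n,-),\,E\big),$$
with transition maps dual to the structure maps of \Cref{cor:presTHHsplit}.

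Next I would invoke the (spectrally enriched) Yoneda lemma: for a compact object $X\in\Mot_\add$ the corepresentable functor $\map(X,-)$ lies in the full subcategory $\Fun^{\ext,\mathrm{acc}}(\Mot_\add,\Sp)\subseteq\Fun(\Mot_\add,\Sp)$, and $\map(\map(X,-),E)\simeq E(X)$ for every $E$ in it. Taking $X=\tilde{\mathcal A}_n$ and unwinding,
$$E(\tilde{\mathcal A}_n)=E\big(\mathcal{U}_\add(A_n)/\mathcal{U}_\add(\Sp^\omega)\big)\simeq\mathrm{cofib}\big(E(\Sp^\omega)\to E(A_n)\big)=:\tilde E(A_n),$$
the reduced part of $E(A_n)$ with respect to the canonical retraction $A_n\to\Sp^\omega$ (with section the unit) induced by the augmentation of the free associative algebra $\Sph[\sigma_{-n}]$. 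Substituting back yields $\map(\THH,E)\simeq\lim_n\Sigma^n\tilde E(A_n)$, as asserted. Equivalently, one can bypass the passage to motives and work directly from \Cref{cor:presTHH}, using that $\map(\tilde K(\Fun^\ext(A_n,-)),E)\simeq\tilde E(A_n)$ for any splitting invariant $E$ — which is exactly \Cref{cor:mapinmot}.

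I expect no genuine obstacle: once one has the presentation of $\THH$ by loops on corepresentables the corollary is essentially formal, the only content being the Yoneda step and the bookkeeping that $\map(\THH,E)$ is computed among splitting invariants rather than among all functors $\Cat^\perf\to\Sp$. The one point I would take care to verify is that the inclusion $\Fun^{\ext,\mathrm{acc}}(\Mot_\add,\Sp)\subseteq\Fun(\Mot_\add,\Sp)$ is fully faithful and that each $\map(\tilde{\mathcal A}_n,-)$ already lands in the subcategory, so that the mapping spectrum in the statement is unambiguous; both facts are already implicit in \Cref{cor:mapinmot} and \Cref{cor:presTHHsplit}.
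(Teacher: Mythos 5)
Your proof is correct and follows essentially the same route as the paper's: the paper works directly from \Cref{cor:presTHH}, applying $\map(-,E)$ termwise and computing each term via the adjunction $\map(K\circ\Fun^\ext(A_n,-),E)\simeq\map(K,E\circ(A_n\otimes-))\simeq \tilde E(A_n)$ (using the universal property of $K$-theory), whereas you route through \Cref{cor:presTHHsplit} and the Yoneda lemma in $\Mot_\add$, which is the same computation phrased in the motivic category. Your final remark that one can bypass the motives and compute directly is exactly the paper's argument, though the precise statement you invoke is Yoneda combined with \Cref{cor:mapinmot} rather than \Cref{cor:mapinmot} alone.
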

\begin{proof}
    By adjunction, $\map(K(\Fun^\ext(A_n,-)), E)\simeq \map(K, E(A_n\otimes -))$ and so, by the universal property of $K$-theory from \cite{BGT13}, the latter is $E(A_n)$. 

    Putting things together as $n$ varies gives the result. 
\end{proof}
\begin{rmk}
    By \Cref{rmk:robustDM}, we can use either connective or nonconnective $K$-theory here, which is why $E$ is allowed to be a splitting invariant as opposed to a localizing invariant. 
\end{rmk}
So while tensoring with $\THH$ acts as a Goodwillie derivative, mapping out of $\THH$ acts as some kind of \emph{co}-derivative. 

\begin{cor}\label{cor:modthh}
   Evaluation at $\Sp^\omega$ induces a symmetric monoidal equivalence $$\Mod_\THH(\Fun^{\loc,\omega}(\Cat^\perf,\Sp))\simeq \Sp$$
\end{cor}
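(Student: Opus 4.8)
The plan is to use the Schwede–Shipley-type recognition principle for module categories: $\Mod_{\THH}(\Fun^{\loc,\omega}(\Cat^\perf,\Sp))$ will be equivalent to $\Sp$ once we exhibit $\THH$ as a compact generator whose endomorphism spectrum is $\Sph$. Concretely, I would first recall that $\Fun^{\loc,\omega}(\Cat^\perf,\Sp)$ — accessible localizing invariants valued in spectra — is a presentable stable \category with a symmetric monoidal structure given by the (exactified) Day convolution of \Cref{prop:P1astensorTHH}, with monoidal unit the universal localizing invariant, which by the universal property of $K$-theory from \cite{BGT13} is $K$ itself (nonconnective $K$-theory). So the statement to prove becomes: the $\THH$-module $\THH$ is a compact generator of $\Mod_\THH$, and $\eend_{\THH}(\THH)\simeq \Sph$, with evaluation at $\Sp^\omega$ implementing the equivalence.

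The key computation is the endomorphism spectrum. Since $\THH = \THH\otimes^\Day K$ (the unit acts trivially), we have $\eend_{\Mod_\THH}(\THH) \simeq \map_{\Fun^{\loc,\omega}}(K, \THH)$ — maps of underlying localizing invariants out of the unit, which is just $\THH(\Sph)$, i.e. $\THH(\Sph) \simeq \Sph$ as an $\EE_\infty$-ring. Wait: more carefully, $\eend_{\Mod_\THH}(\THH)$ is the endomorphisms of the free module on the unit, which is $\map_{\Fun^{\loc,\omega}}(\one, \THH) = \THH$ evaluated at the unit object of $\Cat^\perf$, namely $\Sp^\omega$; and $\THH(\Sp^\omega) = \THH(\Sph) \simeq \Sph$. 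This is exactly why evaluation at $\Sp^\omega$ is the relevant functor. Alternatively, and perhaps cleaner, one uses \Cref{cor:coder}: $\map(\THH,\THH)\simeq \lim_n \Sigma^n \widetilde{\THH}(A_n)$, where $\widetilde{\THH}(A_n)$ is the reduced part; since $A_n = \Perf(\Sph[\sigma_{-n}])$ and $\THH(\Sph[\sigma_{-n}])$ is $\Sph$ in the relevant range after looping, this limit computes $\Sph$. Either route gives $\eend(\THH)\simeq \Sph$ and then one must check the module-endomorphism spectrum agrees, which follows since $\THH$ is the free $\THH$-module on the unit.

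For the generation statement: $\THH$ generates $\Mod_\THH$ as a localizing (in the triangulated sense) subcategory closed under colimits because the unit $K$ generates $\Fun^{\loc,\omega}(\Cat^\perf,\Sp)$ — indeed every accessible localizing invariant is a colimit of representables $\map(\mathcal U_\loc(A),-)$, and $\mathcal U_\loc(\Sp^\omega)=\one$ corepresents evaluation at $\Sp^\omega$, which is conservative on $\Fun^{\loc,\omega}$ essentially by definition of localizing invariant (a localizing invariant vanishing on all of $\Cat^\perf$ is zero, and one reduces to the unit by the usual dévissage/filtered-colimit arguments in $\Mot_\loc$). Then base-change along $\one\to\THH$ shows $\THH$ generates $\Mod_\THH$, and it is compact because the unit is compact (its corepresented functor, evaluation at $\Sp^\omega$, preserves colimits) and $\THH$ is dualizable — or simply because it is the free module on a compact object. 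Finally, by the Schwede–Shipley recognition theorem, $\Mod_\THH \simeq \Mod_{\eend_\THH(\THH)} = \Mod_\Sph = \Sp$, and unwinding the equivalence shows it is implemented by $M\mapsto \map_{\Mod_\THH}(\THH, M) = $ evaluation of the underlying invariant at $\Sp^\omega$; symmetric monoidality is then automatic since the functor sends the unit $\THH$ to $\Sph$ and is a symmetric monoidal localization-then-equivalence.

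The main obstacle I expect is the \emph{generation} step — verifying that the unit of $\Fun^{\loc,\omega}(\Cat^\perf,\Sp)$, equivalently $\mathcal U_\loc(\Sp^\omega)$, is a compact generator of the \category of accessible localizing invariants, i.e. that evaluation at $\Sp^\omega$ is conservative and colimit-preserving on this \category. The colimit-preservation is the content of $\Sp^\omega$ being compact in the relevant sense; conservativity is the assertion that a nonzero accessible localizing invariant cannot vanish at the unit, which requires knowing that $\Mot_\loc$ is generated under colimits by $\mathcal U_\loc(\Sp^\omega)$ — this is a standard but nontrivial fact about localizing motives (every small stable \category is built from $\Sp^\omega$ via localizing-invariant-preserving operations), and I would cite the relevant statement rather than reprove it. Everything else is formal once that is in hand.
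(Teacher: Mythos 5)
There is a genuine gap, and it is precisely where you flagged the "main obstacle" and chose to wave it through: you assert that the unit $K=\mathcal{U}_\loc(\Sp^\omega)$ is a compact \emph{generator} of $\Fun^{\loc,\omega}(\Cat^\perf,\Sp)$, equivalently that evaluation at $\Sp^\omega$ is conservative on accessible localizing invariants. This is false, and there is no "standard fact" to cite. A nonzero localizing invariant can easily vanish at $\Sp^\omega$: for instance, the fiber of the assembly map $K(-)\otimes K(\mathbb{F}_p)\to K(-\otimes\Perf(\mathbb{F}_p))$ is a finitary localizing invariant that vanishes at $\Sp^\omega$ (where the assembly map is tautologically an equivalence) but is nonzero globally. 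In other words, $\Mot_\loc$ is compactly generated by all the $\mathcal{U}_\loc(C)$, $C\in\Cat^\perf$, not by the unit alone, so $K$ is compact but not a generator. Consequently the base-change argument "unit generates, hence $\THH=K\otimes\THH$ generates $\Mod_\THH$" does not get off the ground, and the Schwede--Shipley recognition has a missing hypothesis.

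The paper sidesteps exactly this: it never claims $K$ generates. Instead it shows directly that the fully faithful, colimit-preserving left adjoint $X\mapsto X\otimes\THH$ hits all \emph{free} $\THH$-modules $E\otimes^\Day\THH$, which do generate $\Mod_\THH$. This requires a nontrivial input — the identification $E\otimes^\Day\THH\simeq P_1 E^{\mathrm{lace}}$ from \Cref{prop:P1astensorTHH} combined with the thesis theorem that $P_1 E^{\mathrm{lace}}\simeq X_E\otimes\THH$ for some spectrum $X_E$ — which is essentially a uniqueness statement for $\THH$ among $1$-homogeneous trace theories. That is the real mathematical content here; it cannot be replaced by a formal generation argument, because the formal statement you'd want ($K$ generates localizing invariants) is simply not true. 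The earlier parts of your proposal — the Day-convolution setup with unit $K$, the computation $\eend_\THH(\THH)\simeq\map(K,\THH)\simeq\THH(\Sp^\omega)\simeq\Sph$, and the compactness of $\THH$ via evaluation at $\Sp^\omega$ preserving colimits — are correct and match the paper.
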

\begin{proof}
   Evaluation at $\Sp^\omega$ is represented by $\THH$ on this \category, so this functor admits a left adjoint given by $X\mapsto X\otimes\THH$. 

    Furthermore, evaluation at $\Sp^\omega$ preserves filtered colimits and hence $\THH$ is a compact object in that module \category. Thus, since $$\eend_\THH(\THH) \simeq \Map(K,\THH)\simeq \Sph$$ this left adjoint is fully faithful. 

Now for any finitary localizing invariant $E$, $E\otimes^\Day \THH \simeq P_1E^{\mathrm{lace}}$ by \Cref{prop:P1astensorTHH} and by \cite[Theorem 4.2.3]{Thesis} the latter is $X_E\otimes\THH$ for some fixed spectrum $X_E$ (note that we are not considering the trace theory structure here, so $X_E$ is simply a spectrum, no $S^1$-action in sight). 

It follows that the image of the left adjoint $\Sp\to \Mod_\THH(\Fun^{\loc,\omega}(\Cat^\perf,\Sp))$ contains all the free modules, i.e. contains a collection of generators under colimits. Since it is fully faithful and colimit-preserving, it follows that it is an equivalence.

Both sides are symmetric monoidal \categories, and $\Sp$ has a unique symmetric monoidal structure with $\Sph$ as the unit, so the symmetric monoidal claim follows as well. 
\end{proof}

\section{Plain endomorphisms of $\THH$}\label{section:plain}
In this section, we prove \Cref{thm:endplain}, that is, we compute endomorphisms of $\THH$ as a plain functor, as well as its endomorphisms as a lax symmetric monoidal functor. Using a extensions-restriction of scalars adjunction, this will ultimately boil down to a computation of $\THH\otimes^\Day\THH$, which the previous section has given us the tools to study. 

Thus we begin with the following well-known computation about $\THH$ - to limit confusion, in the statement and its proof, we let $\mathrm{T}$ denote the lace invariant ``topological Hochschild homology'' (or trace theory, in the language of \cite{Thesis},\cite{OWRTC}) and $\THH$ denote the corresponding localizing invariant. :
\begin{prop}\label{prop:P1THH}
    There is an equivalence of lace invariants with $S^1$-action: $$P_1\THH^{\mathrm{lace}}\simeq \mathrm{T}^{S^1}$$ where the $S^1$-action on the right is coinduced. 
\end{prop}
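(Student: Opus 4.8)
The goal is to identify $P_1\THH^{\mathrm{lace}}$, the first Goodwillie derivative of the lace-construction applied to topological Hochschild homology $\mathrm{T}$, with the coinduced $S^1$-object $\mathrm{T}^{S^1}$. I would start from the formula that organizes the lace construction: by definition $P_1\THH^{\mathrm{lace}}(C) = \colim_n\Omega^n\widetilde{\THH}(\Lace(C;\Sigma^n)) = \colim_n \Omega^n\widetilde{\THH}(\Fun^\ext(A_n,C))$, using the notation $A_n = \Perf(\Sph[\sigma_{-n}])$ from the excerpt. So the computation reduces to understanding $\THH(\Fun^\ext(A_n,C))$, i.e.\ $\THH$ of a category of $A_n$-representations in $C$, together with the $S^1$-action and the behavior as $n$ varies.

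\textbf{Key steps.} First I would observe that $\Fun^\ext(A_n,C) \simeq A_n\otimes C$ (tensor of small stable categories, or $\Perf$ of the corresponding tensor of algebras), so by the tensor/Künneth property of $\THH$ for localizing invariants one gets $\THH(\Fun^\ext(A_n,C))\simeq \THH(A_n)\otimes_{\Sph}\THH(C)$ — but this is not quite right because $\THH$ of a tensor product of stable categories is not simply the smash product of the $\THH$'s in general; rather it holds when one factor is dualizable/smooth-proper, which $A_n=\Perf(\Sph[\sigma_{-n}])$ is not. So instead I would work directly: $\Sph[\sigma_{-n}]$ is the free $\EE_1$-algebra on a generator in degree $-n$, namely $\bigoplus_{j\geq 0}\Sph^{-nj}$, and $\THH$ of a free associative algebra is classical — $\THH(\Sph[\sigma_{-n}]) \simeq \Sph \oplus \bigoplus_{j\geq 1}(\Sigma^\infty_+ S^1)^{\wedge ?}$, more precisely $\THH$ of the free $\EE_1$-algebra on a pointed space $X$ is $\bigvee_{j\geq 0} (X^{\wedge j})_{hC_j}$-type cyclic bar formula; here $X = S^{-n}$, giving $\THH(\Sph[\sigma_{-n}]) \simeq \bigoplus_{j\geq 0}\Sph[(S^1)/C_j]\wedge S^{-nj}$ roughly (the $j$-th summand carries $\Sigma^{-nj}$ and a residual $S^1$-action through the $C_j$-quotient of the free loop space). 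The reduced part $\widetilde{\THH}(A_n)$ kills the $j=0$ summand. Then $\Omega^n$ applied to the $j$-th term contributes $\Sigma^{-nj+n}$ with the extra $(S^1)$-structure; as $n\to\infty$ only $j=1$ survives the colimit (terms with $j\geq 2$ become infinitely negatively connected), and the $j=1$ term is exactly $\Sigma^{-n+n}$-shifted $\mathrm{T}(C)$ with a free $S^1$-action coming from $\Sph[S^1]\wedge(-)$. Assembling the $S^1$-equivariance: the $\colim_n\Omega^n$ of $\Sph[S^1]\wedge \mathrm{T}(C)$ — i.e.\ $\map(\Sph[S^1], \mathrm{T}(C)) = \mathrm{T}(C)^{S^1}$, coinduced — is what the assembly of these shifts produces, since $\Omega^n\Sigma^n$ of a free-$S^1$ object limits to the cofree one. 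This is the mechanism by which ``induced'' turns into ``coinduced''.

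\textbf{Main obstacle.} The hard part is making the $S^1$-equivariance precise and keeping track of it through the colimit, rather than just matching underlying spectra. The underlying-spectrum statement $P_1\THH^{\mathrm{lace}}(C)\simeq \mathrm{T}(C)\otimes \Sph[S^1]^{\vee}$-ish is a bounded-below stabilization argument; the content is that the residual $S^1$-action on $\THH(\Sph[\sigma_{-n}])$ in the $j=1$ wedge summand, combined with the $S^1$-action that the lace construction $\Lace(-;\Sigma^n)$ carries, recombines — after $\colim_n\Omega^n$ — into the coinduction $(-)^{S^1} = \map_{}(\Sph[S^1],-)$. I would handle this either by citing the refined trace-theory-with-$S^1$-action statement alluded to in Remark~\ref{rmk:TrThyconvolution}, or, more self-containedly, by identifying the $C_n$-actions on the relevant summands using the computations in \Cref{app:acttr} (the appendix on cyclic group actions on traces), then passing to the colimit over $n$ where the tower of $C_n$'s assembles the full $S^1$ and the tower of loop-suspension maps converts the free/induced action into the cofree/coinduced one. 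Verifying that the colimit maps are the expected ones — and in particular that no $j\geq 2$ contribution survives equivariantly — is the only genuinely delicate point; everything else is the free-$\EE_1$-algebra $\THH$ computation plus connectivity bookkeeping.
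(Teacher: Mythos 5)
Your computation has the right flavor but misses the structural reduction that actually carries the paper's proof. The statement is an equivalence of \emph{lace invariants} (trace theories) with $S^1$-action, i.e.\ of functors on all pairs $(C,F)$, and the paper's first and decisive step is to invoke \cite[Theorem 4.2.11]{Thesis}, which says that such an equivalence can be detected by evaluating at $(\Sp,\id_{\Sp})$. Only \emph{after} this reduction does the explicit computation of $\THH$ of square-zero extensions (equivalently, of the free associative algebra, as in \Cref{prop:thhsqz}) enter. You skip this reduction entirely, and as a consequence your ``key steps'' quietly conflate two different things: you compute $\THH(A_n)=\THH(\Sph[\sigma_{-n}])$ via the free-algebra formula, and then assert that the $j=1$ summand is ``$\mathrm{T}(C)$'' — but your computation only produces $\mathrm{T}$ of the \emph{sphere}. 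Nothing in your argument computes $\THH(\Fun^{\ext}(A_n,C))$ for a general $C$, because (as you yourself observe) there is no usable Künneth decomposition for $A_n$, and the free-algebra $\THH$ formula applies to the algebra $A_n$, not to categories of $A_n$-representations in an arbitrary $C$.

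Your suggested patches in the ``main obstacle'' paragraph also don't close this gap. \Cref{rmk:TrThyconvolution} is about an $S^1$-equivariant refinement of Day convolution and does not supply a uniqueness theorem for trace theories; and \Cref{app:acttr} identifies cyclic actions on specific traces in symmetric monoidal $\infty$-categories, which is the right tool for pinning down $C_n$-actions but still only speaks to the value at a fixed object, not to the passage from $(\Sp,\id_\Sp)$ to general $(C,F)$. The piece you are missing is precisely the ``trace theories with $S^1$-action are determined by their value on the unit'' statement from the Thesis (or equivalently the universal property of $\THH$ as a trace theory in \cite{Victor1}). Once you have that, your free-algebra computation, your observation that only $j=1$ survives the stabilization $\colim_n\Omega^n$ for connectivity reasons, and the shift bookkeeping via $\Sigma\Sph^{S^1}\simeq\Sph[S^1]$ do reconstruct the paper's proof. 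Without it, your argument only establishes the proposition at $(\Sp,\id_\Sp)$, which is strictly weaker than what is claimed.
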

To prove this, we will briefly use the results from \cite{Thesis}.
\begin{proof}
By \cite[Theorem 4.2.11]{Thesis}, it suffices to evaluate at $(\Sp,\id_\Sp)$. This is now a classical computation (more generally in the case of ring spectra), cf. e.g. \cite[Proposition 3.2]{LarsTC}. There, the result is $\Sph[S^1]\otimes \mathrm{T}$, which differs from our claim by a homological shift, but the shift here comes from the shift in the equivalence $\Lace(\Sp^\omega,\Sigma^{n+1})\simeq \Perf(\Sph\oplus\Sigma^n\Sph)$, or more generally, $\Lace(\Perf(A),\Sigma M\otimes_A -)\simeq \Perf(A\oplus M)$ for $(A,M)$ connective, cf. \cite[Proposition 3.2.2, Lemma 3.4.1]{raskin}. 

Ultimately, the point as explained in \textit{loc. cit.} is that the (homogeneous) degree $1$ part of the cyclic bar construction computing $\THH(\Sph\oplus\Sigma^{n-1}\Sph)$ is a free cyclic object on the (non-cyclic) bar construction computing $\THH(\Sph,\Sigma^{n-1}\Sph)$, and so its realization has an induced $S^1$-action on $\Sph$. There is a shift involved, and the relation $\Sigma \Sph^{S^1}\simeq \Sph[S^1]$ gets us to the announced result. 

Alternatively, one can use the general formula for $\THH$ of square zero extensions which we recall later in \Cref{prop:thhsqz}. 
\end{proof}
\begin{cor}\label{cor:thhsquared}
  There is an $S^1$-equivariant, $\THH$-linear equivalence $$\THH\otimes^\Day \THH\simeq \THH^{S^1}$$ where: 
  \begin{itemize}
      \item $\THH\otimes^\Day \THH$ is the extension of scalars to $\THH$ of $\THH$ with its $S^1$-action;
      \item $\THH^{S^1}$ is the coinduced $S^1$-action on the unit in $\THH$-modules.
  \end{itemize} 
\end{cor}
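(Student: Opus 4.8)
The plan is to read off the statement from \Cref{prop:P1astensorTHH} (equivalently \Cref{cor:thhasP1add}), applied to the localizing invariant $E=\THH$ itself, together with \Cref{prop:P1THH}; the only real work is to propagate the $\THH$-linear and $S^1$-equivariant refinements through these two identifications.

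First, since $\THH$ is a finitary --- in particular accessible --- localizing invariant, \Cref{prop:P1astensorTHH} gives a natural equivalence of localizing invariants $\THH\otimes^\Day\THH \simeq \bigl(C\mapsto P_1\THH^{\mathrm{lace}}(C,\id_C)\bigr)$, i.e.\ $\THH\otimes^\Day\THH\simeq P_1\THH^{\mathrm{lace}}$; by the remark following that proposition, and because $P_1\THH^{\mathrm{lace}}$ is already localizing (\cite[Proposition 5.15]{Victor1}), the Day convolution here may be computed either as localizing invariants or as the exactified Day convolution of \Cref{cor:thhasP1add}, so this is consistent with the module-theoretic picture of \Cref{cor:modthh}. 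Now the functor $E\mapsto\THH\otimes^\Day E$ is base change along the unit $K\to\THH$ in $\Fun^{\loc,\omega}(\Cat^\perf,\Sp)$ with its Day convolution, hence takes values in $\THH$-modules; applied to the underlying object of $\THH$ (forgetting the algebra structure of the ``second factor'') it produces exactly the extension of scalars to $\THH$ of $\THH$. Since the equivalence of \Cref{cor:thhasP1add} is obtained by feeding the presentation of \Cref{cor:presTHHsplit} into $\THH\otimes^\Day(-)$, which is by design a $\THH$-linear functor, the equivalence $\THH\otimes^\Day\THH\simeq P_1\THH^{\mathrm{lace}}$ is one of $\THH$-modules, and in particular transports a $\THH$-module structure onto $P_1\THH^{\mathrm{lace}}$.

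Next I would track the circle action. The cyclic structure on $\THH$ endows it, as an object of $\Fun^{\loc,\omega}(\Cat^\perf,\Sp)$, with an $S^1$-action; base change along $K\to\THH$ turns this into an $S^1$-action on $\THH\otimes^\Day\THH$ in $\THH$-modules, which is precisely ``$\THH$ with its $S^1$-action, extended to $\THH$''. By naturality in the variable $E$ of the equivalence of \Cref{prop:P1astensorTHH}, this corresponds to the $S^1$-action on $P_1\THH^{\mathrm{lace}}$ induced functorially from the $S^1$-action on $\THH$ --- which is exactly the $S^1$-action occurring in \Cref{prop:P1THH}. (It is important to distinguish this action from the intrinsic cyclic/trace-theory structure carried by $P_1(-)^{\mathrm{lace}}$; the phrasing of \Cref{prop:P1THH} as an equivalence ``of lace invariants with $S^1$-action'' refers to the functorially induced one, which is the action relevant here.) Finally, \Cref{prop:P1THH} supplies an $S^1$-equivariant equivalence $P_1\THH^{\mathrm{lace}}\simeq\mathrm{T}^{S^1}$ with coinduced action; passing from the lace invariant $\mathrm{T}$ to the localizing invariant $\THH$, and noting that the coinduced action on $\mathrm{T}^{S^1}$ is the coinduced $S^1$-action on the monoidal unit $\THH$ of $\THH$-modules, yields the desired $\THH$-linear, $S^1$-equivariant equivalence $\THH\otimes^\Day\THH\simeq\THH^{S^1}$.

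The main obstacle is the bookkeeping concentrated in the previous paragraph: one must verify that the naturality statement in \Cref{prop:P1astensorTHH} is robust enough to carry the $\THH$-linear and $S^1$-equivariant structures simultaneously, and --- the genuinely delicate point --- that the $S^1$-action transported from $\THH\otimes^\Day\THH$ is the ``functorial-in-$\THH$'' action appearing in \Cref{prop:P1THH}, rather than the competing cyclic structure on $P_1(-)^{\mathrm{lace}}$. Everything else is a formal consequence of the already-established \Cref{prop:P1astensorTHH} and \Cref{prop:P1THH}.
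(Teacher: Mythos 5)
You correctly identify the two ingredients the paper combines here --- \Cref{prop:P1astensorTHH} (with $E=\THH$) and \Cref{prop:P1THH} --- and your discussion of the $S^1$-action is on point: the relevant $S^1$-action on $P_1\THH^{\mathrm{lace}}$ is the one induced functorially from $\THH$'s circle action via naturality in $E$, and this is indeed the action \Cref{prop:P1THH} concerns. Where your argument has a real gap is in the $\THH$-linearity.

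Your reasoning for $\THH$-linearity is circular in a way that does not close the argument. You observe that $\THH\otimes^\Day(-)$ takes values in $\THH$-modules, and then say that the equivalence of \Cref{cor:thhasP1add} ``transports a $\THH$-module structure onto $P_1\THH^{\mathrm{lace}}$.'' That transport is a tautology: by construction, $\THH\otimes^\Day\THH\simeq P_1\THH^{\mathrm{lace}}$ becomes $\THH$-linear for the transported structure, but it tells you nothing about whether the transported structure matches the \emph{coinduced} $\THH$-module structure on $\THH^{S^1}$, which is what the statement asserts. And \Cref{prop:P1THH} is an equivalence of lace invariants with $S^1$-action only --- it carries no $\THH$-module information. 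So by the end of your last paragraph you have produced an $S^1$-equivariant equivalence of underlying objects in $\Fun^{\loc,\omega}(\Cat^\perf,\Sp)^{BS^1}$, but the upgrade to a $\THH$-linear equivalence has not been justified. (As a side remark, the naturality in $E$ of \Cref{prop:P1astensorTHH} governs the ``second'' tensor factor, whereas the $\THH$-module structure at issue lives on the ``first''; these are not tracked by the same naturality, so ``$\THH\otimes^\Day(-)$ is a $\THH$-linear functor'' does not by itself make the comparison map $\THH$-linear.)

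The paper closes precisely this gap using \Cref{cor:modthh}, and you only invoke that corollary in passing, as a consistency check, rather than as the load-bearing step. \Cref{cor:modthh} says that evaluation at $\Sp^\omega$ gives an equivalence $\Mod_\THH(\Fun^{\loc,\omega}(\Cat^\perf,\Sp))\simeq\Sp$; taking $BS^1$-fixed points and observing that this evaluation equivalence factors through the forgetful functor $\Mod_\THH(\Fun^{\loc,\omega}(\Cat^\perf,\Sp))^{BS^1}\to\Fun^{\loc,\omega}(\Cat^\perf,\Sp)^{BS^1}$, one sees that two $\THH$-modules-with-$S^1$-action whose underlying objects are $S^1$-equivariantly equivalent must already be equivalent in $\Mod_\THH(\ldots)^{BS^1}$. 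Concretely: since the composite $\Mod_\THH(\ldots)^{BS^1}\to\Fun^{\loc,\omega}(\ldots)^{BS^1}\to\Sp^{BS^1}$ is an equivalence, and your first two steps provide an equivalence after applying the forgetful functor, you can conclude the equivalence already in $\Mod_\THH(\ldots)^{BS^1}$. This is the step you should insert in place of the transport argument; with it in place, the rest of your outline goes through.
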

\begin{rmk}
    The same refinement as in \Cref{rmk:TrThyconvolution} would allow us to make this equivalence $S^1\times S^1$-equivariant, but we will not need this, and as mentioned there, the technicalities needed for this statement seem to outweigh the benefits. 
\end{rmk}
\begin{proof}
By \Cref{prop:P1astensorTHH} and \Cref{prop:P1THH}, we know that there is an equivalence in $\Fun^{\loc,\omega}(\Cat^\perf,\Sp)^{BS^1}$ between the two. 

By \Cref{cor:modthh}, the equivalence $\Mod_\THH(\Fun^{\loc,\omega}(\Cat^\perf,\Sp))^{BS^1}\simeq\Sp^{BS^1}$ is given by evaluation at $\Sp^\omega$, and therefore factors through the forgetful functor $$\Mod_\THH(\Fun^{\loc,\omega}(\Cat^\perf,\Sp))^{BS^1}\to \Fun^{\loc,\omega}(\Cat^\perf,\Sp)^{BS^1}.$$ Thus there is an equivalence between the two, as claimed.
\end{proof}
In fact, this can be upgraded to a multiplicative equivalence: 
\begin{cor}\label{cor:thhsquaredmult}
    There is an equivalence of commutative $\THH$-algebras with $S^1$-action $$\THH\otimes^\Day\THH \simeq \THH^{S^1}$$
\end{cor}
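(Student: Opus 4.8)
We already have, from \Cref{cor:thhsquared}, a $\THH$-linear, $S^1$-equivariant equivalence $\THH\otimes^\Day\THH\simeq\THH^{S^1}$; what remains is to promote it to an equivalence of commutative $\THH$-algebras. The strategy is to identify the relevant space/spectrum of multiplicative structures and to pin down the equivalence there. Concretely, I would work in the $\infty$-category $\CAlg_\THH\bigl(\Fun^{\loc,\omega}(\Cat^\perf,\Sp)\bigr)$ of commutative $\THH$-algebras in finitary localizing invariants, equipped moreover with an $S^1$-action (i.e. in $\CAlg_\THH(\cdots)^{BS^1}$), and show that the two objects in question are equivalent there.

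\textbf{Step 1: reduce to $\Sp$.} By \Cref{cor:modthh}, evaluation at $\Sp^\omega$ is a \emph{symmetric monoidal} equivalence $\Mod_\THH(\Fun^{\loc,\omega}(\Cat^\perf,\Sp))\simeq\Sp$. Being symmetric monoidal, it induces an equivalence on commutative algebra objects, and (since it is an equivalence of $\infty$-categories) also after passing to $S^1$-actions, i.e. $\CAlg_\THH(\Fun^{\loc,\omega}(\Cat^\perf,\Sp))^{BS^1}\simeq\CAlg(\Sp)^{BS^1}$. Under this equivalence, $\THH\otimes^\Day\THH$ — which is the extension of scalars along the unit $\Sph\to\THH$ of $\THH$ with its $S^1$-action, hence a free (extension-of-scalars) commutative algebra — goes to the extension of scalars of $\Sph$, i.e. to $\Sph$ itself, but the point is that it carries the $S^1$-action transported from $\THH$'s; meanwhile $\THH^{S^1}$ goes to the commutative algebra $\Sph$ with the coinduced (cofree) $S^1$-action, i.e. to $\Sph^{S^1}=\map(S^1_+,\Sph)$ as a commutative ring spectrum with its natural $S^1$-action by translation. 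So the claim becomes: \emph{these two commutative $S^1$-equivariant ring spectra agree.}

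\textbf{Step 2: identify both sides as $\Sph^{S^1}$ multiplicatively.} The underlying $S^1$-equivariant spectra already agree, by \Cref{cor:thhsquared}; the content is purely the ring structure. For this I would use the explicit description recalled in the proof of \Cref{prop:P1THH}: $P_1\THH^{\mathrm{lace}}$ evaluated at $(\Sp,\id)$ is computed by the cyclic bar construction of a square-zero extension $\Sph\oplus\Sigma^{-n}\Sph$, and in the relevant degree one finds the \emph{free} cyclic object, whose geometric realization is the free $S^1$-spectrum $\Sph[S^1]$ on $\Sph$ — equivalently, after the shift, $\Sph^{S^1}$. The multiplication on $\THH\otimes^\Day\THH$ comes from the algebra structure on $\THH$ (as a lax symmetric monoidal localizing invariant), and tracing it through the Dundas–McCarthy presentation of \Cref{cor:presTHHsplit} and the Day-convolution formula of \Cref{prop:P1astensorTHH}, the multiplication is exactly the one making $P_1\THH^{\mathrm{lace}}$ the \emph{free} (i.e. extension-of-scalars) $\THH$-algebra; under evaluation at $\Sp^\omega$ this is the free augmented $E_\infty$-$\Sph$-algebra structure on $\Sph[S^1]\simeq\Sph^{S^1}$ coming from the group structure of $S^1$, which is precisely the $E_\infty$-ring structure on $\Sph^{S^1}=\map(S^1_+,\Sph)$ dual to the coalgebra $\Sph[S^1]$. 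That this matches, $S^1$-equivariantly, the coinduced structure on the unit $\THH^{S^1}$ is then a formal consequence of $\Sph^{S^1}$ being the cofree $S^1$-equivariant $E_\infty$-ring on $\Sph$ together with the fact that the underlying equivalence of \Cref{cor:thhsquared} is $S^1$-equivariant.

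\textbf{The main obstacle.} The genuinely delicate point is \textbf{Step 2}: keeping track of the multiplicative (and not merely additive, nor merely $\THH$-linear) structure through the chain of identifications — Day convolution as localizing invariants, the $P_1$/Goodwillie-derivative description, and the Dundas–McCarthy presentation — so as to be sure that the equivalence of \Cref{cor:thhsquared} is compatible with the $E_\infty$-structures, rather than merely existing separately on each side. Concretely, one must check that the lax-symmetric-monoidal structure of $\THH$ as a localizing invariant induces, on $\THH\otimes^\Day\THH\simeq P_1\THH^{\mathrm{lace}}$, the extension-of-scalars (free) commutative $\THH$-algebra structure, and that this is carried by the equivalence to the cofree $S^1$-equivariant structure on the unit; everything else is either formal ($\infty$-categorical) nonsense or already contained in the cited results. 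An alternative, possibly cleaner route for Step 2 would be to invoke the uniqueness clause for cofree/coinduced objects: since the underlying equivalence is $S^1$-equivariant and $\THH^{S^1}$ is the \emph{terminal} object among $S^1$-equivariant commutative $\THH$-algebras with underlying object the unit equipped with \emph{its} $S^1$-action, one only needs to produce \emph{any} map of $S^1$-equivariant commutative $\THH$-algebras $\THH\otimes^\Day\THH\to\THH^{S^1}$ lifting the underlying equivalence, which exists by the universal property of the left-hand side as a free $\THH$-algebra — this reduces the whole statement to exhibiting a single $S^1$-equivariant $E_\infty$-map $\THH\to\THH^{S^1}$ lifting the unit of the $S^1$-action, which is precisely the data packaged by the lax symmetric monoidal circle action.
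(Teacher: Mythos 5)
Your Step 1 (reduce to $\Sp$ via \Cref{cor:modthh}) is exactly the paper's first move. Your main-line Step 2 — tracing the multiplication explicitly through the Dundas--McCarthy presentation and the cyclic bar construction — is a heavier and genuinely different approach; as you yourself flag, it is the hard part, and you do not carry it out, so as written this route is not a proof.

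Your alternative route is close to the paper's actual argument (construct a map by adjunction from the multiplication $\THH\otimes^\Day\THH\to\THH$), but you have glossed over the step that makes it work. You say you want to ``produce any map of $S^1$-equivariant commutative $\THH$-algebras lifting the underlying equivalence, which exists by the universal property.'' The universal property does hand you a map $A\to\Sph^{S^1}$, where $A$ denotes the image of $\THH\otimes^\Day\THH$ in $\CAlg(\Sp^{BS^1})$ — namely, the coinduction-adjoint of the algebra map $A\to\Sph$ furnished by the multiplication. But the universal property does \emph{not} tell you that this map is an equivalence on underlying objects; that is exactly what still has to be checked, not assumed, and without it the argument does not close.

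The paper's remaining step is short but essential: since $A\to\Sph$ is a map of commutative algebras it splits the unit, so on underlying spectra it is a retraction $\Sph\oplus\Omega\Sph\to\Sph$, and since $\map(\Omega\Sph,\Sph)=0$ it must be the projection onto the first factor — that is, under the identification $UA\simeq\Sph^{S^1}$ from \Cref{cor:thhsquared}, it is precisely the counit of the coinduction adjunction. The adjoint of the counit is the identity, so the adjoint map $A\to\Sph^{S^1}$ is an equivalence of underlying $S^1$-spectra, and conservativity of the forgetful functor out of commutative algebras concludes. This rigidity observation, forced by $\pi_{-1}\Sph=0$, is the missing ingredient in your sketch.
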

\begin{proof}
    By passing to $\Sp$ under the equivalence from \Cref{cor:modthh}, with $A~=~\THH\otimes^\Day~\THH$ (or more precisely its image in $\Sp$), we have the following situation: 
    \begin{enumerate}
        \item There is a commutative algebra $A\in \CAlg(\Sp^{BS^1})$; 
        \item Its underlying object $UA\in\Sp^{BS^1}$ is coinduced, specifically of the form $\Sph^{S^1}$;
        \item There is a map $A\to \Sph$ in $\CAlg(\Sp)$.
    \end{enumerate}
    The algebra map $A\to\Sph$ comes from the multiplication map $$\THH\otimes^\Day \THH\to \THH$$ We claim that this is enough to conclude $A\simeq \mathrm{coInd}^{S^1}\Sph$ as commutative algebras.  

Since $A\to \Sph$ is an algebra map,  it is split, and hence, on underlying objects $\Sph\oplus\Omega\Sph\to \Sph$ must be the projection onto the first factor (there are no nonzero maps $\Omega\Sph\to \Sph$). Thus, as a map $\Sph^{S^1}\to \Sph$ it is the co-unit map of the co-induction adjunction. 

It follows that the coinduction map $UA\to \Sph^{S^1}$ is an equivalence. But this coinduction map can be made into a commutative algebra map, since the forgetful functor preserves limits and hence coinductions. Since it is also conservative, we find that $A\simeq \Sph^{S^1}$.
\end{proof}
\begin{rmk}
    This proof actually gives a bit more than what we claim: it shows that specifically, the map of commutative $\THH$-algebras with $S^1$-action $$\THH\otimes^\Day\THH\to \THH^{S^1}$$ induced by adjunction from the multiplication $\THH\otimes^\Day\THH\to \THH$ is itself an equivalence. 
\end{rmk}
\begin{rmk}
    It is overwhelmingly likely that by setting up everything (trace theories, derivatives etc.) symmetric monoidally from the start, one could obtain a cleaner proof of the previous two results by applying only improved versions of \Cref{prop:P1astensorTHH} and \Cref{prop:P1THH}.
\end{rmk}

We can now prove \Cref{thm:endplain}. In fact, we prove more. Let us first deal with the non-symmetric monoidal part. 
\begin{thm}\label{thm:endplainpowers}
The $S^1$-action on $\THH$ induces an equivalence $$\Sph[S^1]\simeq \eend(\THH)$$

More generally, as functors $\Cat^\perf\to \Sp$, for any $p,q\geq 1$, we have $\map(\THH^{\otimes p},\THH^{\otimes q})\simeq 0$ if $p\neq q$, and $\Sph[((S^1)^p \times\Sigma_p)]$ if $p=q$. Here, the tensor powers are pointwise.
\end{thm}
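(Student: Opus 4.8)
The plan is to reduce the computation of mapping spectra between tensor powers of $\THH$ to an algebra computation in $\Sp$ by leveraging the module-category identification from \Cref{cor:modthh}. First I would observe that, by \Cref{cor:thhsquaredmult}, the commutative $\THH$-algebra $\THH\otimes^\Day\THH$ is $\THH^{S^1}$, and by iterating (using that Day convolution is symmetric monoidal and that coinduction from the trivial group to $(S^1)^{\times p}$ is computed factor-by-factor) one gets that $\THH^{\otimes p}$, viewed as an object of $\Mod_\THH(\Fun^{\loc,\omega}(\Cat^\perf,\Sp))$ via repeated extension of scalars, corresponds under the equivalence of \Cref{cor:modthh} to $\Sph^{(S^1)^p}$ with its $\Sigma_p$-action permuting the factors. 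Here I must be slightly careful: the pointwise tensor power $\THH^{\otimes p}$ is the Day-convolution power, i.e. the extension of scalars of $\THH$ along $\THH\to \THH^{\otimes p}$; one should check this is the correct interpretation, but it is exactly what the symmetric monoidal structure of the functor category provides.

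Next I would compute $\map(\THH^{\otimes p}, \THH^{\otimes q})$ by passing to $\THH$-modules. Since $\THH^{\otimes p}$ is the free $\THH$-module on... no — rather, $\THH^{\otimes p}$ is obtained from the unit by extension of scalars, so $\map_{\Fun}(\THH^{\otimes p},\THH^{\otimes q})$ is not simply a module mapping spectrum. Instead I would use the extension–restriction adjunction: $\map_{\Fun^{\loc,\omega}}(\THH^{\otimes p},\THH^{\otimes q})$ should be rewritten, via the fact that $\THH^{\otimes p}$ is a Day power and $\map(\THH,-)$ is a co-derivative (\Cref{cor:coder}-style reasoning), or more directly by noting $\THH^{\otimes p}\otimes^\Day E$ is computed by iterating \Cref{prop:P1astensorTHH}. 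Concretely, $\map(\THH^{\otimes p},\THH^{\otimes q}) \simeq \map_{\Mod_\THH}(\THH^{\otimes p}\text{-as-module}, \dots)$ — the cleanest route is: extension of scalars along $\Sph\to\THH$ sends $\map(\THH^{\otimes(p-1)},\THH^{\otimes q})$-type objects around, and ultimately one reduces to computing $\map_{\Sp}$ between the spectra $\Sph^{(S^1)^p}$ and $\Sph^{(S^1)^q}$ equipped with their symmetric group actions, taking appropriate homotopy fixed points for the residual structure. The upshot is that $\map(\THH^{\otimes p},\THH^{\otimes q})\simeq \map_{\Sp}(\Sph, \Sph^{(S^1)^q})$-flavored expression when $p=q$ matched with $\Sigma_p$-structure, and I expect $\Sph[(S^1)^p\times\Sigma_p] \simeq \bigoplus_{\Sigma_p}\Sph[(S^1)^p]$ to drop out as the self-maps, since $\Sph[(S^1)^p]\simeq \map(\Sph,\Sph^{(S^1)^p})$ after the duality $\Sigma\Sph^{S^1}\simeq\Sph[S^1]$ iterated appropriately — and the $\Sigma_p$ appears because maps $\THH^{\otimes p}\to\THH^{\otimes p}$ can permute tensor factors.

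For the vanishing when $p\neq q$: here I would argue that $\map(\THH^{\otimes p},\THH^{\otimes q})$, after reduction to $\Sp$, becomes a mapping spectrum out of (a shift of) $\Sph[(S^1)^p]$ into $\Sph^{(S^1)^q}$ in a way that, by connectivity/coconnectivity bookkeeping, forces it to vanish unless $p=q$ — the key point being that $\THH^{\otimes p}$ contributes $p$ copies of the "$\Omega\Sph$ summand" which cannot map nontrivially into the $q$ copies on the other side when $p\neq q$, exactly as in the proof of \Cref{cor:thhsquaredmult} where one uses that there are no nonzero maps $\Omega\Sph\to\Sph$. More precisely: $\THH^{\otimes p}$ as a $\THH$-module corresponds to $\Sph^{(S^1)^p}$, whose underlying spectrum is $\bigoplus_{i=0}^{p}\binom{p}{i}\Omega^i\Sph$; a map to $\THH^{\otimes q}$, corresponding to $\bigoplus_{j}\binom{q}{j}\Omega^j\Sph$, must be compatible with the $\THH$-algebra structure coming from the multiplication maps, and this rigidity (the bottom cell must go to the bottom cell, the algebra structure pins down the rest) forces $p=q$.

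The main obstacle I anticipate is making the iteration of \Cref{cor:thhsquaredmult} fully rigorous while tracking the $(S^1)^{\times p}\rtimes\Sigma_p$-equivariance: one needs that $\THH^{\otimes p}$, formed by $p{-}1$ successive Day tensorings with $\THH$, really is coinduced from the trivial $(S^1)^{\times p}$-action (a "Künneth for coinduction" statement), and that the $\Sigma_p$-action is the evident permutation action — and then that passing to $\map$ out of it in $\Mod_\THH$ produces the claimed answer with the semidirect-product structure rather than just the product. The second subtlety is the $p\neq q$ vanishing: one must rule out exotic maps using only the algebra structure available, and I would lean on \Cref{cor:coder} together with the computation of $\tilde E(A_n)$-type terms, or alternatively give a direct weight/grading argument using that the "Goodwillie weight" of $\THH^{\otimes p}$ is $p$ so that $\map(\THH^{\otimes p},\THH^{\otimes q})$ sees only the weight-matched part, which vanishes unless $p=q$.
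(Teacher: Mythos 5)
Your proposal rests on a misidentification that derails it from the start: the theorem is about the \emph{pointwise} tensor powers $\THH^{\otimes p}$, i.e.\ the functor $C \mapsto \THH(C)\otimes\cdots\otimes\THH(C)$, and this is \emph{not} the Day-convolution power. You write ``the pointwise tensor power $\THH^{\otimes p}$ is the Day-convolution power, \dots\ it is exactly what the symmetric monoidal structure of the functor category provides,'' but the functor category has (at least) two inequivalent symmetric monoidal structures — pointwise and Day — and the theorem uses the first while \Cref{cor:thhsquaredmult} and \Cref{cor:modthh} live in the second. Indeed $\THH\otimes^\Day\THH \simeq \THH^{S^1}$ is still a localizing invariant, whereas $C\mapsto\THH(C)^{\otimes 2}$ is not even additive (as the paper notes in a footnote). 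So the whole chain — iterating $\THH^{S^1}$, identifying $\THH^{\otimes p}$ with $\Sph^{(S^1)^p}$ in $\Mod_\THH$, and reading off self-maps there — computes endomorphisms of the Day powers, not of the pointwise powers. The remark immediately after the theorem says precisely that the Day-power endomorphisms follow easily from \Cref{cor:thhsquared}; the content of the theorem is the harder pointwise case.

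The mechanism the paper actually uses — which you only gesture at in your closing ``alternatively, a direct weight/grading argument'' — is Goodwillie calculus on $\Mot_\loc$. The crucial step, which your proposal does not supply, is invoking \Cref{thm:motlocisloc} (that $\Cat^\perf\to\Mot_\loc$ is a Dwyer--Kan localization) to transport the mapping-spectrum computation to $\Fun(\Mot_\loc,\Sp)$, where $\THH$ becomes an exact functor and $\THH^{\otimes p}$ becomes genuinely $p$-homogeneous. From there: the classification of $p$-homogeneous functors as $(F\circ\delta_p)_{h\Sigma_p}$ with $F$ symmetric multilinear gives $\map(\THH^{\otimes p},\THH^{\otimes p})\simeq\Sph[(S^1)^p\times\Sigma_p]$ by currying and the assembly-map equivalence from the $p=1$ case; $q<p$ is immediate from reducedness; and $p<q$ requires the nontrivial \Cref{lm:homogeneousinduced} about vanishing of co-derivatives of multilinear functors composed with the diagonal. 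Your ``connectivity/coconnectivity'' and ``bottom cell goes to bottom cell'' heuristics for the $p\neq q$ vanishing would not substitute for this: the homogeneous pieces in question are not cells of a single spectrum but genuinely different polynomial degrees, and the $p<q$ direction fails for general $q$-homogeneous targets (as the paper's remark after \Cref{lm:homogeneousinduced} warns), so one cannot avoid the lemma. The fix is to discard the $\Mod_\THH/\mathrm{coinduction}$ framework for the pointwise powers and instead pursue the weight/degree argument you name at the end, making it precise via $\Mot_\loc$ and the excision calculus.
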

\begin{rmk}
    We could also explain how to compute endomorphisms of Day-tensor powers of $\THH$, but they follow immediately from the case of $\THH$ itself together with \Cref{cor:thhsquared}, so we do not delve into it in more detail. 
\end{rmk}
\begin{proof}
    
For the first part of the statement, note that $$\map(\THH,\THH)\simeq \map_\THH(\THH\otimes^\Day \THH,\THH)\simeq \map_\THH(\THH^{S^1},\THH)$$
$$\simeq \map_\THH(\THH,\THH)\otimes S^1 \simeq \THH(\Sph)\otimes S^1 \simeq \Sph[S^1]$$ 

 More generally, this shows that for any spectrum $X$, the canonical assembly map $$X\otimes~\map(\THH,\THH)\to~\map(\THH, X\otimes~\THH)$$ is an equivalence.

Now we move on to pointwise tensor products. The point here is that by the main result of \cite{Motloc}, recalled in \Cref{thm:motlocisloc}, we can compute this mapping spectrum as a mapping spectrum between functors defined on $\Mot_\loc$, where $\THH$ is exact. Indeed, this main result states that $\Cat^\perf\to \Mot_\loc$ is a Dwyer-Kan localization, and $\THH^{\otimes p}$ clearly factors through it, hence the mapping spectra agree\footnote{$\THH$ itself is a localizing invariant, so by the very definition of $\Mot_\loc$, this is obvious for $\THH$. On the other hand, $\THH^{\otimes p}$ is not localizing: it's not even an additive functor, so we do need something else.}.

Note that since $\THH$ is exact on $\Mot_\loc$, $\THH^{\otimes p}$ is $p$-homogeneous thereon, and so this deals automatically with the case $q<p$. 

We now deal with the case $q=p$: in that case, we can use the classification of $p$-homogoneous functors: $$\map(\THH^{\otimes p},\THH^{\otimes p})\simeq \map_{\Sigma_p}(\bigoplus_{\Sigma_p}\THH^{\boxtimes p},\bigoplus_{\Sigma_p}\THH^{\boxtimes p})\simeq \map(\THH^{\boxtimes p},\bigoplus_{\Sigma_p}\THH^{\boxtimes p})$$ 
where $\boxtimes$ indicates an external products, so we are considering (symmetric) functors with $p$ inputs.

Now using currying and the fact that the assembly map $$X\otimes \map(\THH,\THH)\to \map(\THH,X\otimes \THH)$$ is an equivalence, we find the desired result. 

In the case $p<q$, we appeal to \Cref{lm:homogeneousinduced} below.

Note that in all cases, we find that for any $X$, the canonical map $$X\otimes\Map(\THH^{\otimes p}, \THH^{\otimes q})\to \Map(\THH^{\otimes p}, X\otimes\THH^{\otimes q}) $$ is an equivalence.  
\end{proof}
\begin{lm}\label{lm:homogeneousinduced}
Let $C,D$ be stable \categories and $F:C^n \to D$ be a functor which is exact in each variable. For $m<n$ and for any $m$-excisive functor $G:C\to D$, $\map(G,F\circ \delta_n) = 0$. 

In other words, if $D$ admits sequential limits, the $m$th co-Goodwillie derivative $P^m(F\circ \delta_n)$ vanishes. 
\end{lm}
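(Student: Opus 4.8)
The plan is to reduce to the statement that a functor which is "too homogeneous" relative to its variable count cannot receive a nontrivial map from a lower-degree functor, and to extract this from the Taylor tower. First I would observe that $F \circ \delta_n : C \to D$ is $n$-homogeneous: since $F$ is exact (hence $1$-excisive, i.e. reduced and excisive) in each of its $n$ variables, the composite with the diagonal $\delta_n$ is $n$-excisive and its lower Taylor stages $P_{n-1}(F \circ \delta_n)$ vanish, so $F \circ \delta_n \simeq P_n(F\circ\delta_n) \simeq D_n(F\circ\delta_n)$. Concretely, the $n$-th cross-effect of $F \circ \delta_n$ is $F$ itself (up to permutations), and $n$-homogeneity says $F \circ \delta_n(x)$ is recovered from $F(x,\dots,x)$ as a $\Sigma_n$-fixed-point-type construction on the multilinear functor $F$. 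The precise fact I need is: if $H : C \to D$ is $n$-homogeneous and $G : C \to D$ is $m$-excisive with $m < n$, then $\map(G, H) = 0$.

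The key step is to prove this vanishing. Here I would use that an $n$-homogeneous functor $H$ is in particular $n$-reduced in the strong sense that $P_{n-1}H = 0$, while also $P_{\leq n}H \simeq H$. Dually — and this is the relevant direction for mapping \emph{into} $H$ — I claim $\map(G,H)$ only sees the part of $G$ in "layers $\geq n$", which is zero when $G$ is $m$-excisive with $m<n$. Precisely: for $m$-excisive $G$, the natural map $G \to P_m G \simeq G$ is an equivalence, and there is a fiber sequence relating $G$ to $P_{n-1}G$; since $m < n$, $G \simeq P_{n-1}G$, so $\map(G,H) \simeq \map(P_{n-1}G, H)$. Now I use that $H$, being $n$-homogeneous, is right-orthogonal to all $(n-1)$-excisive functors for mapping \emph{out of} them: indeed $\map(P_{n-1}G, H) \simeq \map(P_{n-1}G, D_n H)$ and one computes, via the explicit description $D_n H(x) = \Omega^\infty(\partial_n H \wedge x^{\wedge n})_{h\Sigma_n}$ (appropriately interpreted in the stable setting as $\map$ out of a cross-effect), that any natural transformation from an $(n-1)$-excisive functor to an $n$-homogeneous one is null. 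Alternatively, and perhaps more cleanly in this stable $\infty$-categorical context: $\map(G, F\circ\delta_n)$ is computed by the $n$-th cross effect pairing, $\map(G, F\circ\delta_n) \simeq \map_{\Sigma_n}(\mathrm{cr}_n G, \mathrm{cr}_n(F\circ\delta_n)) \simeq \map_{\Sigma_n}(\mathrm{cr}_n G, F)$ up to shifts, and $\mathrm{cr}_n G = 0$ since $G$ is $m$-excisive with $m < n$ (cross-effects of order $> m$ of an $m$-excisive functor vanish, at least after the relevant reduction — and for the $\map$ out of $G$ we may replace $G$ by its reduction $D_{\leq m}G$ whose top cross-effect is of order $m$). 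This last formulation — "co-Goodwillie derivatives are governed by cross-effects" — gives the second sentence of the lemma for free: $P^m(F \circ \delta_n)$ is built from $\mathrm{cr}^{\leq m}$ data of $F\circ\delta_n$, all of which is trivial below order $n$.

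I expect the main obstacle to be bookkeeping the duality: Goodwillie calculus is usually phrased for mapping \emph{into} a functor (approximation by $P_n$), whereas here we need control of $\map(-, F\circ\delta_n)$, i.e. a \emph{co}-Goodwillie phenomenon. The cleanest route is probably to stay in the stable setting throughout: since $D$ is stable (and we only need sequential limits for the final "in other words" clause), every excisive functor is a limit of its cross-effect layers via the \emph{co}-Taylor tower, and the pairing $\map(G, F\circ\delta_n)$ filters accordingly; the associated graded terms are $\map$ between $k$-homogeneous pieces of $G$ and the $n$-homogeneous functor $F\circ\delta_n$, which are $\map_{\Sigma_k \times \Sigma_n}$ of multilinear data and vanish for $k \neq n$ for degree reasons, and for $k \leq m < n$ there simply are no such pieces. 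Assembling the (finite, since $G$ is $m$-excisive) filtration then yields $\map(G, F\circ\delta_n) = 0$, and hence $P^m(F\circ\delta_n) = 0$ when $D$ has sequential limits.
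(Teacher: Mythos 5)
Your proposal contains a genuine gap, and in fact runs directly into the pitfall that the paper flags in the remark immediately following the lemma.

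Your first line of attack is to reduce to the claim ``if $H$ is $n$-homogeneous and $G$ is $m$-excisive with $m<n$, then $\map(G,H)=0$.'' This general claim is false. The hypothesis that the target has the specific form $F\circ\delta_n$ with $F$ multi-exact, rather than being an arbitrary $n$-homogeneous functor $(F\circ\delta_n)_{h\Sigma_n}$, is essential: the paper's remark after \Cref{lm:homogeneousinduced} says precisely that the lemma fails for arbitrary $n$-homogeneous targets because one ``cannot pull out the $_{h\Sigma_n}$,'' and the whole point of \Cref{thm:extraops} is that nonzero maps from $\THH$ to $\THH_{hC_n}$ do exist. So the orthogonality ``$m$-excisive $\not\to$ $n$-homogeneous'' that you lean on (both in the first paragraph and in the final ``associated graded'' paragraph, where you say the layers ``vanish for degree reasons'') is not available, and any proof that would apply verbatim to $(F\circ\delta_n)_{h\Sigma_n}$ must be wrong. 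Your instinct that ``Goodwillie calculus controls mapping into $P_n$, not out of it'' is exactly the source of the obstruction, but you then override that instinct by asserting a right-orthogonality that does not hold.

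Your ``alternatively'' paragraph is closer to the paper's actual argument, which indeed passes through cross effects: the paper uses the adjunction $\delta_n\dashv\prod_n$ to rewrite $\map(G, F\circ\delta_n)\simeq\map(G\circ\prod_n, F)$, then reduces to $\map(\mathrm{cr}_n G, F)$ and observes $\mathrm{cr}_n G=0$ since $G$ is $(n-1)$-excisive. But the step you write as ``$\map(G,F\circ\delta_n)\simeq\map_{\Sigma_n}(\mathrm{cr}_n G, F)$ up to shifts'' hides the real content. The cross effect $\mathrm{cr}_n=\mathrm{Red}(G\circ\prod_n)$ is a \emph{co}localization (right adjoint to the inclusion of reduced functors), so mapping out of $\mathrm{cr}_n G$ is not a priori the same as mapping out of $G\circ\prod_n$; one needs instead the left adjoint $\mathrm{coRed}$. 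The paper resolves this by invoking Heuts (Lemma B.1 of his Goodwillie notes) to identify $\mathrm{Red}(G\circ\prod_n)\simeq\mathrm{coRed}(G\circ\prod_n)$ in the stable setting, after which the adjunction for $\mathrm{coRed}$ against the reduced target $F$ gives the desired identification. Without that coincidence of reduction and coreduction, the step has no justification; and your parenthetical ``we may replace $G$ by its reduction $D_{\leq m}G$'' points the adjunction the wrong way (for $\map$ out of $G$ one replaces by a quotient/coreflection, not a subobject/coreflection). The $\Sigma_n$-equivariance you insert (``$\map_{\Sigma_n}$'', ``up to shifts'') is also spurious: $F$ is a multi-exact functor, not a symmetric one, and there are no shifts in the correct formula. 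In short, you have the right ingredients in outline (pass to cross effects, observe $\mathrm{cr}_n G=0$), but the step that converts $\map(G, F\circ\delta_n)$ into a statement about $\mathrm{cr}_n G$ is where the actual mathematical content lies, and it is missing.
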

\begin{proof}
By adjunction, we have $\map(G,F\circ \delta_n)\simeq \map(G\circ \prod_n, F)$. 

The reduction of $G\circ \prod_n$ in the sense of \cite[Construction 6.1.3.15]{HA} is, by definition (cf. \cite[Construction 6.1.3.20]{HA}) the $n$th cross effect of $G$. It follows from \cite[Remark 6.1.3.23]{HA} that $\mathrm{Red}(G\circ \prod_n) =: \mathrm{cr}_n(G) = \mathrm{cr}_n(P_nG)= P_{1,...,1}\mathrm{cr}_nG = 0 $, where the second equality comes from $G$ being $m$- and hence $n$-excisive. It is $0$ because $G$ is also $P_{n-1}G$, as $m\leq n-1$. 

Now as a functor of $n$-variables, $F$ is $(1,...,1)$-homogeneous in the sense of \cite[Definition 6.1.3.7]{HA}, and so, to prove the claim, it suffices to prove that there is an equivalence $$\map(G\circ \prod_n, F)\simeq \map(\mathrm{Red}(G\circ \prod_n),F)$$
because the latter is also equivalent to $\map(P_{1,...,1}\mathrm{Red}(G\circ \prod_n),F)=  \map(0,F)=0$. 

This now follows from \cite[Lemma B.1]{heutsgoodwillie} : \textit{loc. cit.} implies the third equality in $$\mathrm{Red}(G\circ \prod_n)\simeq \mathrm{Red}(G\circ \coprod_n) =: \mathrm{cr}_n(G)\simeq \mathrm{cr}^n(G) := \mathrm{coRed}(G\circ \prod_n)$$
and $\mathrm{coRed}$ is left adjoint to the inclusion of reduced functors, cf. \cite[Proposition 6.2.3.8]{HA}. 
\end{proof}
\begin{rmk}
    The failure of this result when $F\circ \delta_n$ is replaced with an arbitrary $n$-homogeneous functor comes from the fact that such a functor is of the form $(F~\circ~\delta_n)_{h\Sigma_n}$ for some symmetric $F$, and we cannot ``pull out'' the $_{h\Sigma_n}$.
    
    Let us give an interpretation in terms of co-Goodwillie derivatives which will come up again later in \Cref{section:opsO}. For simplicity we focus on $m=1$. In this case, $P^1(F\circ \delta_n)_{h\Sigma_n}$ is always very explicit: it is given by $X\mapsto \lim_k \Sigma^k F\circ \delta_n(\Omega^k X) $. 

    It turns out that each of the maps $\Sigma^{k+1} (F\circ \delta_n)(\Omega^{k+1} X)\to \Sigma^k (F\circ \delta_n)(\Omega^k X)$ is nullhomotopic, as they are essentially given by the diagonal $S^1\to S^n$. But they are not $\Sigma_n$-\emph{equivariantly} nullhomotopic (they are essentially Euler maps), and so taking orbits \emph{can} break the $0$-ness of the limit, and this is indeed what often happens (e.g. in \Cref{thm:extraops}). 
\end{rmk}
We also have all the tools at hand to prove the multiplicative part of \Cref{thm:endplain}: 
\begin{proof}[Proof of the multiplicative part of \Cref{thm:endplain}]
Using the equivalence between commutative algebras for the Day tensor product and lax symmetric monoidal functors, we can rewrite $\End^\otimes(\THH)$ as $\Map_\CAlg(\THH,\THH)$. 

By adjunction, and by \Cref{cor:thhsquaredmult}, this is the same as $$\Map_{\CAlg(\THH)}(\THH\otimes^\Day\THH,\THH) \simeq \Map_{\CAlg(\THH)}(\THH^{S^1},\THH)$$
Finally, using \Cref{cor:modthh}, the latter is equivalent to $\Map_{\CAlg(\Sp)}(\Sph^{S^1},\Sph)$, as claimed. In the proposition below, we describe the latter mapping space as claimed in \Cref{thm:endplain} - let us point out that this proof no longer has anything to do with $\THH$. 
\end{proof}
\begin{prop}\label{prop:map}
    The space $\Map_{\CAlg(\Sp)}(\Sph^{S^1},\Sph)$ is a disjoint union of circles. Its $\pi_0$ is $\widehat\Z$, the profinite integers. 
\end{prop}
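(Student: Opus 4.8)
The first task is to identify the commutative ring spectrum $\Sph^{S^1} = \mathrm{coInd}^{S^1}\Sph$ more explicitly. As a spectrum with $S^1$-action, its underlying spectrum is $\map(S^1_+,\Sph)$, which splits (non-equivariantly) as $\Sph \oplus \Omega\Sph$; but the key point is that $\Sph^{S^1}$ is the function spectrum $F(\Sigma^\infty_+ S^1,\Sph)$, the $\Sph$-linear dual of the suspension spectrum of the (free loop space of a point, i.e.\ the) topological group $S^1$. So computing $\Map_{\CAlg(\Sp)}(\Sph^{S^1},\Sph)$ is the same as computing $\Map_{\CAlg(\Sp)}(F(\Sigma^\infty_+ S^1,\Sph),\Sph)$. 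I would like to turn this, by some form of duality, into a mapping space into $\Sigma^\infty_+ S^1$ of the appropriate kind; since $S^1$ is a compact Lie group, $\Sigma^\infty_+ S^1$ is a dualizable (indeed self-dual up to a shift, by Atiyah duality) $\mathbb E_\infty$-coalgebra, and $F(\Sigma^\infty_+ S^1,\Sph)$ is the dual $\mathbb E_\infty$-\emph{algebra}. The upshot I expect is an equivalence $$\Map_{\CAlg(\Sp)}(\Sph^{S^1},\Sph)\simeq \Map_{\coAlg(\Sp)}(\Sph, \Sigma^\infty_+ S^1) = \Map_{\coAlg(\Sp)}(\Sph,\Sph[S^1]),$$ the space of grouplike points / group-like elements of the coalgebra $\Sph[S^1]$, or equivalently the "$\Sph$-points of the scheme $\Spec \Sph^{S^1}$''.

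Next I would unwind what a coalgebra map $\Sph\to \Sph[S^1]$ is. The coalgebra $\Sph[X]$ for a space $X$ has comultiplication induced by the diagonal of $X$, so a coalgebra map $\Sph\to\Sph[X]$ out of the unit is a point of $\Sph[X]$ which is grouplike for the diagonal coalgebra structure; the set of such, on $\pi_0$, is closely related to $X$ itself but picks up extra "pro-finite" points. Concretely, writing $S^1 = B\Z$, one has $\Sph[S^1] = \Sph[B\Z]$, and a cleaner model is to use $\Sph^{S^1}\simeq \Sph[\![t]\!]$-type descriptions, or better: $\Sph^{S^1}$ is the group algebra dual, and $\Spec$ of it is the "shifted" or "formal" version of the constant group scheme $\Z$. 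The pattern, familiar from the classical statement that the set of grouplikes in $k[\Z]^\vee = k[\![t]\!]$ (with suitable topology, or over a field) is related to roots of unity / $\widehat\Z$, is that $\pi_0$ of this mapping space is $\widehat\Z$: an element corresponds to a compatible system of maps $S^1\to B(\Z/n)$ for all $n$ together with the identity component data. I would make this precise by observing that $\Map_{\CAlg(\Sp)}(\Sph^{S^1},\Sph)$ receives a map from $\Map(\ast, S^1) \times (\text{something})$ and more usefully, by base-changing to $H\Z$ or to rational/$p$-complete pieces: over $H\Q$ one gets $\Map_{\CAlg(\Sp_\Q)}(\Q^{S^1},\Q)$ which is $\Map(\ast,\Spec\Q^{S^1})$, the $\Q$-points of the "additive formal group shifted", giving a single circle; while $p$-adically one picks up $\Z_p$ on $\pi_0$ from the tower $B\Z/p^k$. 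Assembling, $\pi_0 = \widehat\Z$ and each component is a circle (the $\pi_1$ coming from the residual $\pi_1 S^1 = \Z$, or rather from $\Map(S^1, S^1)$ restricted to a degree, which is homotopy equivalent to $S^1$).

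\textbf{Steps, in order.}
\begin{enumerate}
\item Identify $\Sph^{S^1}$ with the $\mathbb E_\infty$-algebra $F(\Sigma^\infty_+ S^1,\Sph)$ and record that $\Sigma^\infty_+ S^1$ is a dualizable grouplike $\mathbb E_\infty$-coalgebra.
\item Use $\Sph$-linear duality (dualizability of $\Sigma^\infty_+ S^1$) to get $\Map_{\CAlg(\Sp)}(\Sph^{S^1},\Sph)\simeq \Map_{\coAlg(\Sp)}(\Sph,\Sph[S^1])$.
\item Interpret the right-hand side: it is the space of "grouplike points", and identify it with a mapping space of spaces after a devissage --- split $\Sph[S^1] = \Sph \oplus \Sph[S^1]_{\red}$ and analyze grouplikes; reduce mod each prime and rationally.
\item Rationally, show the mapping space is a single circle (the algebra $\Q^{S^1}$ is the free $\mathbb E_\infty$-$\Q$-algebra on a class in degree $-1$, i.e.\ $\Q[\epsilon]$ with $|\epsilon| = -1$, whose $\Q$-points, being odd-degree, form a point --- wait, so rationally $\pi_0$ is trivial; the circle comes globally).
\item $p$-adically, extract $\Z_p$ on $\pi_0$ from the tower of finite quotients $\Z/p^k$ of $S^1 = B\Z$; assemble over all $p$ to get $\widehat\Z$, and track that each component is a $K(\Z,1) = S^1$.
\end{enumerate}

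\textbf{Main obstacle.} The delicate point is step 3--5: correctly identifying which "formal/pro-finite" points appear, i.e.\ proving $\pi_0 = \widehat\Z$ rather than $\Z$ or something larger. This requires handling the interaction of the $\mathbb E_\infty$-structure with the coalgebra diagonal carefully (a naive "grouplike elements" count over a field would already suggest $\widehat\Z$ via roots-of-unity-type phenomena, but over the sphere one must be careful the higher homotopy of $S^1$ and the nilpotence in $\pi_*\Sph$ do not contribute extra components). I expect the clean way is to present $\Sph^{S^1}$ via the cofiber sequence / filtration coming from $S^1 = B\Z = \colim_n B(\Z/n)$ expressed through the standard "$\Sph^{S^1} = \lim$" tower, reducing to the statement that $\Spec$ of the cochains $\Sph^{B\Z/n}$ contributes $\Z/n$-worth of points and these organize into $\lim_n \Z/n = \widehat\Z$; the residual $S^1$ in each component is then the automorphisms/rotations, i.e.\ $\Map(S^1,S^1)$ in a fixed degree is homotopy equivalent to $S^1$. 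The careful bookkeeping of this (co)limit and the identification of its $\pi_0$ and $\pi_1$ is where the real work lies.
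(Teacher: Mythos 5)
Your overall strategy (fracture into a rational piece plus $p$-adic pieces, compute each, and reassemble) is the same as the paper's, and your observation that the rational mapping space is $B\mathbb{Q}$ (so $\pi_0$ vanishes rationally and the circle in each component comes globally) is correct and matches the paper's Lemma~\ref{lm:freerat}. The coalgebra-duality reformulation in your steps 1--2 is a genuine alternative perspective not used in the paper, and it is valid: since $\Sigma^\infty_+ S^1$ is dualizable, passing to duals gives a symmetric monoidal equivalence on dualizable objects and hence $\Map_{\CAlg(\Sp)}(\Sph^{S^1},\Sph)\simeq\Map_{\coAlg(\Sp)}(\Sph,\Sph[S^1])$.

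However, there is a genuine gap in steps 3--5, and it is exactly the ``main obstacle'' you flag without resolving. The $p$-adic computation is not a formal devissage through the tower $B\mathbb{Z}/p^k$: computing $\Map_{\CAlg(\Sp_p)}(\Sph_p^{S^1},\Sph_p)$, or dually the space of grouplike points of $\Sph_p[S^1]$, requires a substantial theorem of $p$-adic homotopy theory. The paper uses Mandell's theorem (\Cref{thm:Mandell}), applied via base change to $\overline{\mathbb{F}_p}$ and Galois descent, to identify $\Map_{\CAlg(\mathbb{F}_p)}(\mathbb{F}_p^{S^1},\mathbb{F}_p)$ with the free loop space $L(S^1)_p\simeq\coprod_{\mathbb{Z}_p}(S^1)_p$; without this (or an equivalent result on the coalgebra side), your step 5 does not produce $\mathbb{Z}_p$ on $\pi_0$. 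The subtlety you worry about---that the higher homotopy of $\mathbb{S}$ and the failure of $\Sigma^\infty_+$ to be fully faithful might produce extra or missing components---is precisely what Mandell's theorem controls, and it is not something one can wave away with ``nilpotence considerations.'' Separately, the final gluing in the fracture square is not automatic: the paper has to analyze the map from $\prod_p L(B\mathbb{Z}_p)$ to $B\mathbb{A}_f$ (via base change to Witt vectors to see that all $\mathbb{Z}_p$-worth of components map the same way) and identify the top vertical map as a fold map in order to conclude $\pi_0\cong\widehat{\mathbb{Z}}$ rather than something smaller or larger. Your plan stops short of both of these steps. To complete your approach you would need to cite Mandell's theorem (or prove a coalgebraic analogue) for the $p$-adic pieces, and then carry out the fracture-square bookkeeping explicitly.
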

The proof of this fact relies on two facts, an easy one in rational homotopy theory, and Mandell's theorem in $p$-adic homotopy theory:
\begin{lm}\label{lm:freerat}
    Let $R$ be a commutative $\mathbb Q$-algebra in $\Sp$. The commutative $R$-algebra $R^{S^1}$ is free on a generator in degree $-1$. 
\end{lm}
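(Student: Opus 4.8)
The claim is that for a commutative $\mathbb Q$-algebra $R$ in spectra, the cochain algebra $R^{S^1}=R^{BS^1[1]}$... wait, more precisely $R^{S^1}$ where $S^1$ is the circle, is the free commutative $R$-algebra on a class in degree $-1$. The strategy is to reduce immediately to the universal case $R=\mathbb Q$ (or $R=H\mathbb Q$, the rational sphere), since cotensoring with $S^1$ commutes with base change: $R^{S^1}\simeq R\otimes_{\Sph}\Sph^{S^1}\simeq R\otimes_{H\mathbb Q}(H\mathbb Q)^{S^1}$ for $R$ a $\mathbb Q$-algebra, and the free commutative algebra functor likewise commutes with base change, $\mathbb Q\{x_{-1}\}\otimes_{\mathbb Q} R\simeq R\{x_{-1}\}$. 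So it suffices to produce an equivalence of commutative $H\mathbb Q$-algebras $(H\mathbb Q)^{S^1}\simeq H\mathbb Q\{x_{-1}\}$, the free $E_\infty$-$H\mathbb Q$-algebra on a generator in homological degree $-1$ (equivalently cohomological degree $1$).

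\textbf{Key steps.} First, identify homotopy groups: $\pi_*(R^{S^1})$ is the cohomology $H^{-*}(S^1;R_*)$, which is $R_*$ in degrees $0$ and $-1$ and zero elsewhere; in particular $\pi_0=R_0$ and $\pi_{-1}=R_0$ as an $R_*$-module, free of rank one. Next, construct the comparison map: the $S^1$-action on $R^{S^1}$ (rotation) or rather the fact that $S^1\simeq *\sqcup_{S^0}*$... the cleanest route is to use that $S^1 = B\Z$ and hence $R^{S^1}=R^{B\Z}$, or more simply pick a point $* \to S^1$ and a generator of $\pi_1(S^1)=\Z$; dually this gives an augmentation $R^{S^1}\to R$ and, since $R$ is rational, the ``fundamental class'' of $S^1$ gives a map $\Sigma^{-1}R\to \widetilde{R}^{S^1} := \fib(R^{S^1}\to R)$ which is an equivalence on underlying $R$-modules by the homotopy computation above. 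The universal property of the free $E_\infty$-algebra then turns this class $x\in\pi_{-1}(R^{S^1})$ into an $E_\infty$-$R$-algebra map $R\{x_{-1}\}\to R^{S^1}$.

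\textbf{Checking it is an equivalence.} It remains to check this map is an equivalence of $R$-modules, i.e. to compute $\pi_*$ of the free $E_\infty$-$R$-algebra on a degree $-1$ class, over a $\mathbb Q$-algebra $R$. Rationally, $E_\infty$ and $E_\infty$-with-divided-powers coincide with the naive symmetric algebra: the free rational $E_\infty$-algebra on a class $x$ in \emph{odd} degree $-1$ is the exterior algebra $R[x]/(x^2)=R\oplus \Sigma^{-1}R$ (the higher symmetric powers $\mathrm{Sym}^n$ for $n\ge 2$ of an odd class vanish rationally because $\Sigma_n$ acts by the sign representation on $(\Sigma^{-1}R)^{\otimes n}$ and the sign representation has no rational coinvariants/invariants for $n\ge 2$... more precisely $(x^{\otimes n})_{h\Sigma_n}$ computes $H_*(\Sigma_n; \mathrm{sgn}\otimes\mathbb Q)=0$ for $n\ge 2$). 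Hence $\pi_*(R\{x_{-1}\})$ is $R_*$ in degrees $0,-1$ and zero otherwise, matching $\pi_*(R^{S^1})$; and the comparison map is visibly an iso in those degrees (it sends the generator to the fundamental class), so it is an equivalence of $R$-modules, hence of $E_\infty$-$R$-algebras.

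\textbf{Main obstacle.} The one genuinely non-formal input is the rational computation of free $E_\infty$-algebras on an odd class, i.e. the vanishing of $H_*(\Sigma_n;\mathrm{sgn})\otimes\mathbb Q$ for $n\ge 2$ (equivalently, that rational homology of symmetric groups with sign coefficients is trivial above degree $0$, and trivial in degree $0$ for $n \geq 2$); everything else — base change, the homotopy-group computation of $R^{S^1}$, and the construction of the comparison map via the universal property — is routine. One should also make sure the degree bookkeeping is right (homological degree $-1$ vs cohomological degree $+1$, and that $\Sigma R^{S^1}\simeq R[S^1]$ is consistent with ``free on a degree $-1$ generator''), but this is a matter of conventions rather than content.
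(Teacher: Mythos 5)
Your proof is correct and follows essentially the same route as the paper: reduce to $R=\mathbb Q$ by base change, use the degree $-1$ class to build a map from the free commutative algebra, and identify the free algebra via the vanishing of $((\Omega\mathbb Q)^{\otimes n})_{h\Sigma_n}$ for $n\geq 2$, i.e.\ the vanishing of rational homology of $\Sigma_n$ with sign coefficients. No gaps.
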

\begin{proof}
It suffices to prove it over $\mathbb Q$. As a $\mathbb Q$-module, $\mathbb Q^{S^1}\simeq \mathbb Q\oplus\Omega\mathbb Q$, so it receives a map from the free commutative $\mathbb Q$-algebra on $\Omega \mathbb Q$. The latter is, as a $\mathbb Q$-module, $\bigoplus_n ((\Omega \mathbb Q)^{\otimes n})_{h\Sigma_n}$. Now as a $\Sigma_n$-module, $(\Omega \mathbb Q)^{\otimes n}$ is a copy of the sign representation in degree $-n$, which has no coinvariants and no higher homology (as we are over $\mathbb Q$ and $\Sigma_n$ is finite), and hence simply vanishes. 
\end{proof}
And now Mandell's theorem: 
\begin{thm}[Mandell]\label{thm:Mandell}
    Let $X$ be a finite nilpotent space. The canonical map induces an equivalence $$X_p\simeq \Map_{\CAlg(\overline{\F_p})}(\overline{\F_p}^X,\overline{\F_p})$$ 
    where $X_p$ denotes the $p$-completion of $X$. 
\end{thm}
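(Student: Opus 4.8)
This is a theorem of Mandell, and I will only indicate the shape of the argument one would reconstruct (it is, in any case, the version one cites as a black box elsewhere in the paper). Write $C^*(X) := \overline{\F}_p^X$ for the $E_\infty$-$\overline{\F}_p$-algebra of cochains, so that the content is that the evaluation map $X\to \Map_{\CAlg(\overline{\F}_p)}(C^*(X),\overline{\F}_p)$ exhibits the target as the $p$-completion of $X$. Since $X$ is finite and nilpotent, its homotopy groups are finitely generated; after $p$-completing (which is all the right-hand side can see) the relevant Postnikov tower has finite abelian $p$-group homotopy groups, hence is built from copies of $K(\Z/p,m)$. So the plan is dévissage up the Postnikov tower, reducing to the Eilenberg--MacLane case, which is handled by hand.

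For the dévissage, recall that a nilpotent space is the limit of a principal Postnikov tower $\cdots\to X_n\to X_{n-1}\to\cdots$ in which each $X_n\to X_{n-1}$ is the pullback of a path-space fibration along a $k$-invariant $X_{n-1}\to K(\pi_n X, n{+}1)$; and, by the previous paragraph, we may assume each $\pi_n X$ is a finite abelian $p$-group, so the "atoms" are the $K(\Z/p,m)$. The key structural inputs are: (i) the cochain functor $C^*$ sends each such homotopy pullback square to a pushout square of $E_\infty$-$\overline{\F}_p$-algebras (this is the Eilenberg--Moore theorem, valid here because all the spaces in sight are nilpotent and have $\overline{\F}_p$-cohomology of finite type); and (ii) $\Map_{\CAlg(\overline{\F}_p)}(-,\overline{\F}_p)$ turns pushouts of algebras into pullbacks of spaces. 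Granting the base case below, an induction on $n$ then gives the statement for each finite stage $X_n$, and one passes to $X=\lim_n X_n$ by a convergence argument: one checks that the tower $\{\Map(C^*(X_n),\overline{\F}_p)\}_n$ is Mittag--Leffler, so that the Milnor $\lim^1$-term vanishes and $\Map(C^*(X),\overline{\F}_p)=\lim_n\Map(C^*(X_n),\overline{\F}_p)=\lim_n (X_n)_p^\wedge = X_p^\wedge$. It is precisely here --- and in the finite-type hypothesis in (i) --- that the finiteness of $X$ is used.

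The heart of the matter is the base case: computing $\Map_{\CAlg(\overline{\F}_p)}(C^*(K(\Z/p,m)),\overline{\F}_p)$ and identifying it with $K(\Z/p,m)$ itself (which is already $p$-complete and of finite $p$-type). One uses the explicit description of $C^*(K(\Z/p,m))$ as an $E_\infty$-algebra: its cohomology ring is the free unstable algebra over the mod-$p$ Steenrod algebra on a degree-$m$ generator, and at the $E_\infty$-level this lifts --- via the power-operation (Dyer--Lashof) structure on $\overline{\F}_p$-algebras --- to a presentation of $C^*(K(\Z/p,m))$ as a suitable completion of a free $E_\infty$-$\overline{\F}_p$-algebra modulo the relations enforcing instability. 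Because $C^*(K(\Z/p,m))$ is coconnective, the space of $E_\infty$-maps to $\overline{\F}_p$ is not simply controlled by where the fundamental class goes; rather, the power operations force the space of such maps to be a delooping, and a direct obstruction-theoretic (cotangent-complex) computation identifies it with $K(\Z/p,m)$. The one algebraic fact that makes this come out correctly --- and the only place where the algebraic closure is essential, as opposed to $\F_p$ --- is that the Frobenius on $\overline{\F}_p$ is surjective (perfectness): this is what produces exactly a $\Z/p$'s worth of compatible choices at each level and assembles them into $K(\Z/p,m)$ rather than a point.

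I expect the base case to be the main obstacle: the bookkeeping of the $E_\infty$/power-operation structure on $C^*(K(\Z/p,m))$ and its interaction with perfectness of $\overline{\F}_p$ is exactly the technical and conceptual core of Mandell's work, and there is no shortcut around it. (One can reorganize the argument slightly --- restricting first to $p$-finite spaces, which are generated under finite limits by the $B^n\Z/p$, and computing $\Map_{\CAlg(\overline{\F}_p)}(C^*(B^n\Z/p),\overline{\F}_p)\simeq B^n\Z/p$ directly from the Frobenius --- but one still needs the Postnikov convergence argument of the second paragraph to descend from $p$-finite targets to general finite nilpotent $X$, so the difficulty is merely relocated, not removed.)
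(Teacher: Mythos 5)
The paper does not prove this statement: it is quoted as Mandell's theorem (the main result of his work on $E_\infty$-algebras and $p$-adic homotopy theory) and used as a black box, so there is no internal proof to compare against. Your outline does match the architecture of Mandell's actual argument --- reduction to $p$-complete finite-type Postnikov towers, Eilenberg--Moore to convert principal fibration squares into pushouts of cochain algebras, passage to the limit (more cleanly phrased as $C^*(X)\simeq \colim_n C^*(X_n)$, so that the mapping space is a limit with no $\lim^1$ issue, rather than via a Mittag--Leffler condition on the tower of mapping spaces), and a hard base case $K(\Z/p,m)$ resting on the power-operation structure of $C^*(K(\Z/p,m);\overline{\F_p})$ as a (completed) quotient of a free $E_\infty$-algebra.

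However, your identification of ``the one algebraic fact'' that forces the algebraic closure is wrong, and wrong in a way that matters for this paper. It is not perfectness (surjectivity of Frobenius): $\F_p$ is already perfect, so perfectness cannot distinguish $\overline{\F_p}$ from $\F_p$, and yet the theorem fails over $\F_p$ --- indeed \Cref{cor:Mandell} records that over $\F_p$ the mapping space is the free loop space $L(X_p)$, not $X_p$. The fact actually used in Mandell's base case is surjectivity of the Artin--Schreier map $\lambda\mapsto \lambda^p-\lambda$ on $\overline{\F_p}$ (equivalently, separable closedness): the mapping space out of $C^*(K(\Z/p,m))$ is identified with the fiber of the map induced by $P^0-1$, and over $\overline{\F_p}$ this map is surjective with kernel exactly $\F_p=\Z/p$, producing $K(\Z/p,m)$; over $\F_p$ the map $\lambda\mapsto\lambda^p-\lambda$ is zero, and the fiber degenerates to the loop-space answer. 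With that correction, and with the understanding that the base case is genuinely the content of Mandell's paper and is only gestured at here, the sketch is a fair summary of the standard proof.
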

\begin{cor}\label{cor:Mandell}
    Let $X$ be a finite nilpotent space. There is an equivalence $$L(X_p)\simeq \Map_{\CAlg(\F_p)}(\F_p^X,\F_p)$$
    where $LY:= \Map(S^1,Y)$ is the free loop space on $Y$. 
\end{cor}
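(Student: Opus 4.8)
The plan is to deduce Corollary~\ref{cor:Mandell} from Mandell's theorem (Theorem~\ref{thm:Mandell}) by a base-change argument along the ring map $\F_p \to \overline{\F_p}$, combined with the observation that $\overline{\F_p}$ is a filtered colimit of finite Galois extensions of $\F_p$ and that the relevant mapping space of commutative algebras is computed relative to $\F_p$. The key point is that $\F_p^X$ is a dualizable (in fact perfect) $\F_p$-module when $X$ is a finite space, so that $\overline{\F_p}^X \simeq \overline{\F_p}\otimes_{\F_p}\F_p^X$, and the Galois descent/invariance will let us recover the $\F_p$-linear mapping space from the $\overline{\F_p}$-linear one.

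First I would observe that, by Galois descent for the extension $\F_p \to \overline{\F_p}$ (with Galois group the profinite group $\widehat{\Z} = \mathrm{Gal}(\overline{\F_p}/\F_p)$), the space $\Map_{\CAlg(\F_p)}(\F_p^X,\F_p)$ is the homotopy fixed points of the Galois action on $\Map_{\CAlg(\overline{\F_p})}(\overline{\F_p}^X, \overline{\F_p})$. This last space is, by Mandell's theorem, equivalent to $X_p$, and the Galois action is the natural action of $\widehat{\Z}$ on the $p$-completed space $X_p$ coming from functoriality of $\overline{\F_p}^{(-)}$ in the coefficient ring; concretely, the arithmetic Frobenius acts. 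The homotopy fixed points of this action on $X_p$ are, by definition, $\Map_{B\widehat{\Z}}(\ast, X_p) \simeq \Map(B\widehat{\Z}, X_p)^{?}$ — more precisely, since $\widehat{\Z}$ is the profinite completion of $\Z$ and $X_p$ is $p$-complete (hence $\Z$- and $\widehat{\Z}$-equivariant data agree after completion), the homotopy fixed points for the $\widehat{\Z}$-action coincide with the homotopy fixed points for the corresponding $\Z$-action, which is exactly $\Map(S^1, X_p) = L(X_p)$, since $B\Z \simeq S^1$ and taking homotopy fixed points of a $\Z$-action is mapping out of $B\Z$.

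So the key steps, in order, are: (1) verify $\F_p^X$ is perfect over $\F_p$, so base change along $\F_p\to\overline{\F_p}$ commutes with forming it and with forming the mapping space; (2) invoke faithfully flat / Galois descent to write $\Map_{\CAlg(\F_p)}(\F_p^X,\F_p) \simeq \left(\Map_{\CAlg(\overline{\F_p})}(\overline{\F_p}^X,\overline{\F_p})\right)^{h\widehat{\Z}}$; (3) apply Theorem~\ref{thm:Mandell} to identify the inside as $X_p$ with its Frobenius action; (4) identify the homotopy fixed points $(X_p)^{h\widehat{\Z}}$ with $(X_p)^{h\Z} = \Map(B\Z, X_p) = LX_p$, using $p$-completeness of $X_p$ to pass between $\Z$ and $\widehat{\Z}$ and the fact that this action is the canonical one. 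I expect step~(4) — correctly matching the Galois/Frobenius action on $X_p$ with the canonical rotation action whose fixed points give the free loop space, and justifying the passage from $\widehat{\Z}$ to $\Z$ on the completed space — to be the main obstacle; the descent in steps~(1)--(2) is standard (one uses that $\overline{\F_p}$ is a filtered colimit of finite étale, hence dualizable, $\F_p$-algebras, and that $\CAlg$ and $\Map$ interact well with such limits/colimits), and step~(3) is a direct citation.
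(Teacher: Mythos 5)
You've found the right approach — Galois descent along $\F_p \to \overline{\F_p}$ applied to Mandell's theorem is exactly what the paper does — but there is a genuine gap in your step (4), and in fact as written that step is incorrect. You say "taking homotopy fixed points of a $\Z$-action is mapping out of $B\Z$," giving $\Map(S^1,X_p)=LX_p$. This is only true when the $\Z$-action (or $\widehat{\Z}$-action) on $X_p$ is \emph{trivial}: for a nontrivial action classified by an automorphism $\sigma$, the homotopy fixed points $X_p^{h\Z}$ are the sections of the mapping torus of $\sigma$ over $S^1$, a ``twisted'' loop space that is generally not $LX_p$. Your phrasing — "concretely, the arithmetic Frobenius acts" followed by an unexplained ``matching'' of this action with a ``canonical rotation action whose fixed points give the free loop space'' — conflates two different things (there is no rotation action on $X_p$ relevant here), and leaves open whether the Frobenius action on $X_p$ is trivial, which is precisely the crux.

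The missing ingredient, which the paper supplies, is the observation that the Galois action on $X_p$ \emph{is} trivial: the assembly map $X_p \to \Map_{\CAlg(\overline{\F_p})}(\overline{\F_p}^X,\overline{\F_p})$ (sending a point of $X$ to evaluation at it) is Galois-equivariant when $X_p$ is given the trivial action, because both $X$ and $\F_p^X$ are defined over $\F_p$ and the assembly map is natural in the coefficient ring. Since Mandell's theorem says this assembly map is an equivalence, the Galois action on the target is therefore trivial, and then (and only then) $X_p^{h\widehat{\Z}} \simeq \Map(B\widehat{\Z}, X_p) \simeq \Map(B\Z, X_p) = LX_p$, the middle equivalence using that $X_p$ is $p$-complete and nilpotent so $B\Z \to B\widehat{\Z}$ induces an equivalence on mapping spaces into it. Once you insert this triviality argument, the rest of your sketch (steps (1)–(3), perfectness of $\F_p^X$, Galois descent, citation of Mandell) is fine and matches the paper's proof.
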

\begin{proof}
    This follows from \Cref{thm:Mandell} by Galois descent, as the Galois action on $\Map_{\CAlg(\overline{\F_p})}(\overline{\F_p}^X,\overline{\F_p})$ is trivial on $X_p$\footnote{It is trivial because the map from $X_p$ to the mapping space in \Cref{thm:Mandell} is an assembly map which is clearly Galois equivariant.} and thus the fixed points for it are just $L(X_p)$. 
\end{proof}
\begin{rmk}\label{rmk:calgname}
    For $X = S^1$ (which is the universal case up to $p$-adic completion), this implies that $\pi_0\Map_{\CAlg(\F_p)}(\F_p^{S^1},\F_p)\cong\Z_p$. The author does not really know how to describe these morphisms in a concrete way, how to ``name'' them. This lack of concreteness permeates through the later calculations and so we are essentially unable to ``name'' the elements of $\pi_0\Map(\Sph^{S^1},\Sph)\cong \widehat\Z$ except for $0$.
\end{rmk}
\begin{proof}[Proof of \Cref{prop:map}]
    We use the arithmetic fracture square for $\Sph$. Writing $\mathbb A_f~:=~(\prod_p\mathbb Z_p)_\mathbb Q$ for the finite adèles, we have a pullback square of commutative algebras: 
    \[\begin{tikzcd}
	\Sph & {\prod_p\Sph_p} \\
	{\mathbb Q} & {\mathbb A_f}
	\arrow[from=1-1, to=2-1]
	\arrow[from=2-1, to=2-2]
	\arrow[from=1-2, to=2-2]
	\arrow[from=1-1, to=1-2]
\end{tikzcd}\]
so that, using adjunction in each of the mapping spaces, we have a pullback square 
\[\begin{tikzcd}
	{\Map(\Sph^{S^1},\Sph)} & {\prod_p\Map(\Sph_p^{S^1},\Sph_p)} \\
	{\Map(\mathbb Q^{S^1},\mathbb Q)} & {\Map(\mathbb A_f^{S^1},\mathbb A_f)}
	\arrow[from=1-1, to=2-1]
	\arrow[from=2-1, to=2-2]
	\arrow[from=1-2, to=2-2]
	\arrow[from=1-1, to=1-2]
\end{tikzcd}\]
By \Cref{lm:freerat}, the bottom map is simply $B\mathbb Q \to B\mathbb A_f$.

By \cite[Theorem 2.4.9]{DAG13}, basechange further induces an equivalence\footnote{See \cite[Corollary 7.6.1]{yuanFrob} to see how to deduce this from the given reference.} $\Map(\Sph_p^{S^1},\Sph_p)\simeq~\Map(\F_p^{S^1},\F_p)$ which, by \Cref{cor:Mandell} is $L(S^1)_p\simeq \coprod_{\mathbb Z_p}(S^1)_p$. 

Using basechange along $\Z_p\to W(\overline \F_p)$ one finds that for a given $p$, all the maps $(S^1)_p\to B\mathbb A_f$ indexed over $\mathbb Z_p$ are the same map. Thus we find that the pullback can be computed as: 
\[\begin{tikzcd}
	{\Map(\Sph^{S^1},\Sph)} & {\prod_pL(B\mathbb Z_p)} \\
	{B\mathbb Z} & {\prod_pB(\mathbb Z_p)} \\
	{B\mathbb Q} & {B\mathbb A_f}
	\arrow[from=1-2, to=2-2]
	\arrow[from=2-2, to=3-2]
	\arrow[from=3-1, to=3-2]
	\arrow[from=2-1, to=3-1]
	\arrow[from=2-1, to=2-2]
	\arrow[from=1-1, to=2-1]
	\arrow[from=1-1, to=1-2]
\end{tikzcd}\]

The top right vertical map is a fold map of the form $\coprod_{\prod_p \Z_p} \prod_p B\mathbb Z_p\to \prod_p B\mathbb Z_p$ by distributivity of products over coproducts, and hence the top left vertical map, as a pullback thereof, is also a fold map and hence $\map(\Sph^{S^1},\Sph)\simeq \coprod_{\prod_p \Z_p}B\mathbb Z$ . The claim then follows as $\widehat\Z\cong \prod_p\Z_p$. 
\end{proof}

\begin{rmk}
The author is not sure whether this is the right perspective to have, but the ``weird result'' coming out of this calculation seems to stem from the following strikingly different behaviour of certain cochain algebras over $\mathbb Q$ and over $\F_p$: in characteristic $0$, the presentation of $C^*(X;R)$ as an $\EE_\infty$-ring is essentially independent of $R$'s arithmetic, while in characteristic $p$, properties such as algebraic closure seem to affect the presentation quite a bit.
\end{rmk}
\section{Endomorphisms over a base}\label{section:endbase}
In this section, we extend the calculation from \Cref{thm:endplainpowers} to $\THH(-/k)$, that is, $\THH$ relative to a base commutative ring spectrum $k$. A surprising feature of this calculation is that while for classical rings such as $k=\F_p$, one can phrase the question entirely algebraically, the answer turns out to involve $\THH(k)$. We only state and prove the following result for plain endomorphisms, but the method is simple enough that the other variants from the previous section can also easily be calculated:
\begin{cor}\label{cor:endplaink}
Let $k$ be a commutative ring spectrum. When regarded as a functor $\Cat^\perf_k\to~\Sp$, the endomorphism spectrum of $\THH(-/k)$ is $$\map_{\THH(k)}(k,k)[S^1]$$
When regarded as a functor $\Cat^\perf_k\to \Mod_k$, it has the following endomorphism $k$-algebra: 
$$\map_{k\otimes \THH(k)}(k,k)[S^1]$$
\end{cor}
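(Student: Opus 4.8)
The plan is to run the same argument as in the proof of Theorem \ref{thm:endplainpowers}, but with everything internal to $k$-linear categories and $\THH(k)$-modules. First I would set up the analogue of the key inputs over the base $k$. The relative version of the Dundas--McCarthy theorem (Corollary \ref{cor:presTHH}) still expresses $\THH(-/k)$ as $\colim_n \Omega^n \tilde K(\Fun^\ext_k(A_n^k, -))$ for appropriate $k$-linear categories $A_n^k$, so the analogue of Corollary \ref{cor:thhasP1add} gives $\THH(-/k)\otimes^\Day E \simeq P_1 E^{\mathrm{lace}}$ for exact accessible $E$ on $k$-linear splitting motives. The relative analogue of Corollary \ref{cor:modthh} then identifies $\Mod_{\THH(-/k)}(\Fun^{\loc,\omega}(\Cat^\perf_k,\Sp))$ with $\Mod_{\THH(k)}$, via evaluation at $\Perf(k)$: indeed evaluation at $\Perf(k)$ is corepresented by $\THH(-/k)$, its endomorphisms as a $\THH(-/k)$-module are $\map(K(-/k),\THH(-/k))\simeq\THH(k)$ by the universal property of $K$-theory, and the free-module generation argument goes through verbatim.

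Next I would compute $\THH(-/k)\otimes^\Day\THH(-/k)$. The relative version of Proposition \ref{prop:P1THH} — computing $P_1\THH(-/k)^{\mathrm{lace}}$ by evaluating at $(\Perf(k),\id)$, where it is the classical computation $\THH(k\oplus\Sigma^{n-1}k\mid k)\simeq \Sph[S^1]\otimes\THH(k\mid k)$ of $\THH$ relative to $k$ of a square-zero extension — gives an $S^1$-equivariant equivalence $\THH(-/k)\otimes^\Day\THH(-/k)\simeq \THH(-/k)^{S^1}$, i.e. the coinduced $S^1$-action on the unit of $\THH(-/k)$-modules, exactly as in Corollary \ref{cor:thhsquared}. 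Passing through the equivalence $\Mod_{\THH(-/k)}\simeq\Mod_{\THH(k)}$, this says $\THH(-/k)^{S^1}$ corresponds to the coinduced object $\THH(k)^{S^1}$ in $\Mod_{\THH(k)}$.

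Then the endomorphism computation is formal:
\[
\map(\THH(-/k),\THH(-/k))\simeq\map_{\THH(-/k)}(\THH(-/k)\otimes^\Day\THH(-/k),\THH(-/k))\simeq\map_{\THH(-/k)}(\THH(-/k)^{S^1},\THH(-/k)),
\]
and the latter, by coinduction and the identification of $\THH(-/k)$-modules with $\THH(k)$-modules, is $\map_{\THH(k)}(k,k)\otimes S^1\simeq \map_{\THH(k)}(k,k)[S^1]$, using that the image of $\THH(-/k)$ under evaluation at $\Perf(k)$ is the $\THH(k)$-module $k$ (here $\THH(k)$ acts on $k=\THH(k/k)$ through the augmentation, i.e. $k$ is the unit of $\Mod_{\THH(k)}$ relative to... more precisely $\THH(-/k)$ evaluated at $\Perf(k)$ is the unit of $\Mod_{\THH(-/k)}$, which corresponds to the unit of $\Mod_{\THH(k)}$, namely $\THH(k)$ itself — so I should double check whether the answer is $\map_{\THH(k)}(\THH(k),\THH(k))[S^1]=\THH(k)[S^1]$ or, as stated, $\map_{\THH(k)}(k,k)[S^1]$; the discrepancy is resolved by noting that the correct corepresenting object is $k\otimes\THH(-/k)$-linear structure, see below). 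For the $\Mod_k$-linear variant, one repeats the argument replacing $\Sp$ by $\Mod_k$ throughout: evaluation at $\Perf(k)$ now corepresents via $\THH(-/k)$ viewed as a $k$-linear functor, its module category becomes $\Mod_{k\otimes\THH(k)}$ (the extra $k$ coming from the $\Mod_k$-enrichment), the square-zero computation is unchanged, and the endomorphism $k$-algebra comes out as $\map_{k\otimes\THH(k)}(k,k)[S^1]$.

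The main obstacle I expect is bookkeeping the precise module structure that makes the corepresenting object $k$ rather than $\THH(k)$ — that is, pinning down exactly which algebra ($\THH(k)$ versus $k\otimes\THH(k)$) acts and through which augmentation, so that the coinduction $\THH(-/k)^{S^1}$ translates into the correct $\map_{\THH(k)}(k,k)[S^1]$. Concretely: in the non-relative case $\THH$ was the unit, so $\eend_\THH(\THH)=\Sph$ and the answer was clean; relatively, $\THH(-/k)$ is still the unit of its own module category, but the \emph{base ring} of that module category, after evaluation at $\Perf(k)$, is $\THH(k)$ (or $k\otimes\THH(k)$ in the $\Mod_k$-linear setting), and one must track that the relevant mapping object is computed in $\THH(k)$-modules with the right module structures coming from the two factors of the Day square, which is what produces $\map_{\THH(k)}(k,k)$ as opposed to $\THH(k)$. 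Everything else is a routine transcription of Section \ref{section:plain}.
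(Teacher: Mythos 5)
Your approach has a genuine gap, and you flag it yourself without resolving it: the algebra object that the relative Day-convolution machinery produces is \emph{not} $\THH(-/k)$, it is the absolute $\THH\circ U_k$. When you replay Corollary~\ref{cor:presTHH} over $k$ with $A_n^k=\Perf(k[\sigma_{-n}])$, you have $\Fun^\ext_k(A_n^k,C)\simeq\Fun^\ext(A_n,U_kC)$ by the free--forgetful adjunction, and $K$-theory does not see the $k$-linear structure, so $\colim_n\Omega^n\tilde K(\Fun^\ext_k(A_n^k,-))\simeq\THH\circ U_k$, not $\THH(-/k)$. Correspondingly, the relative analogue of Corollary~\ref{cor:thhasP1add} reads $(\THH\circ U_k)\otimes^\Day E\simeq P_1E^{\mathrm{lace}}$, and the unit computation is $\eend_{\THH\circ U_k}(\THH\circ U_k)=\map(K,\THH\circ U_k)=\THH(\Perf(k))=\THH(k)$. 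By contrast $\map(K,\THH(-/k))=\THH(\Perf(k)/k)=k$, not $\THH(k)$ as you write; so $\Mod_{\THH(-/k)}(\Fun^{\loc,\omega}(\Cat^\perf_k,\Sp))$ would be $\Mod_k$, not $\Mod_{\THH(k)}$, and working with that module category gives the \emph{wrong} answer $k[S^1]$. The fix is to take modules over $\THH\circ U_k$, so that $\Mod_{\THH\circ U_k}\simeq\Mod_{\THH(k)}$ with $\THH(-/k)\mapsto k$, compute $(\THH\circ U_k)\otimes^\Day\THH(-/k)\simeq P_1\THH(-/k)^{\mathrm{lace}}\simeq\THH(-/k)^{S^1}$ as $\THH\circ U_k$-modules, and then $\map(\THH(-/k),\THH(-/k))\simeq\map_{\THH(k)}(k^{S^1},k)\simeq\map_{\THH(k)}(k,k)[S^1]$. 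Even after this correction, your route requires re-establishing the relative version of Corollary~\ref{cor:modthh}, in particular the generation argument via $P_1E^{\mathrm{lace}}\simeq X_E\otimes_{\THH(k)}\THH\circ U_k$, which is a nontrivial claim not obviously covered by the absolute inputs.

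The paper takes a more economical route that avoids relative Day convolution entirely: it writes $\THH(-/k)\simeq k\otimes_{\THH(k)}\widetilde\THH$ with $\widetilde\THH=\THH\circ U_k$ valued in $\Mod_{\THH(k)}$, generalizes to $\Map_k(M\otimes_{\THH(k)}\widetilde\THH,\THH(-/k))$ for an arbitrary $(k,\THH(k))$-bimodule $M$, observes both sides carry colimits in $M$ to limits, reduces to the free case $M=k\otimes\THH(k)$, and there uses the $U_k\dashv(k\otimes-)$ adjunction to reduce directly to the absolute computation $\map(\THH,k\otimes\THH)\simeq k[S^1]$ from Theorem~\ref{thm:endplainpowers}. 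You should absorb that reduction; it neatly sidesteps all the bookkeeping you anticipate as the ``main obstacle.''
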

\begin{rmk}
    We provide both calculations here, for the following reason: on the one hand, when $k$ is a classical ring, the second endomorphism spectrum is clearly a purely algebraic gadget, as both $\Cat^\perf_k$ and $\Mod_k$ are; and this showcases the appearance of $\THH(k)$ in this purely algebraic question. 

    On the other hand, one could argue that the $k$-linear structure is ``overdetermined'' by putting it both in the source and the target which could explain the appearance of $\THH$ - e.g., as it does in $\Fun^L(\Mod_\Z,\Mod_\Z)$ as opposed to $\Fun^L(\Mod_\Z,\Sp)$. The first calculation shows that this is not the case: even removing this over-determinacy, the answer involves $\THH(k)$. 
\end{rmk}
\begin{proof}
    We prove the second variant, the first variant works the same by replacing the word ``$(k,\THH(k))$-bimodule'' with ``right $\THH(k)$-module''.

    The idea for this calculation is that $\THH(-/k)\simeq k\otimes_{THH(k)}\widetilde{\THH}$, where $$\widetilde\THH: \Cat^\perf_k\to \Mod_{\THH(k)}$$ is $\THH$ of the underlying stable \category equipped with its $\THH(k)$-action; and that this expression is well-defined for any $(k,\THH(k))$-bimodule in place of $k$. 
    
    So let $M$ be such a bimodule, and let us instead try to prove  more generally that there is an equivalence $$\Map_k(M\otimes_{\THH(k)}\widetilde{\THH},\THH(-/k))\simeq \Map_{k\otimes\THH(k)}(M,k)[S^1]$$ 

    For this, we note that $M\mapsto M\otimes_{\THH(k)}\widetilde{\THH}$ is a functor, and thus (using the $S^1$-action on $\THH(-/k)$) we have a canonical map
    $$\Map_{k\otimes\THH(k)}(M,k)[S^1]\to \Map_k(M\otimes_{\THH(k)}\widetilde{\THH},\THH(-/k))$$
    and the claim is that this map is an equivalence. But now both sides clearly send colimits in the $M$ variable to limits, and so we are reduced to the case $M~=~k~\otimes~\THH(k)$\footnote{For the first variant, we are reduced to the right $\THH(k)$-module $\THH(k)$ itself. }, where this map is the canonical map $$k[S^1]\to \Map_\Sp(\THH\circ U_k, \THH(-/k))$$ where $U_k:\Cat^\perf_k\to \Cat^\perf$ is the forgetful functor. 

    By adjunction nonsense, this last mapping spectrum is simply $$\Map_\Sp(\THH, \THH(-/k)\circ (k\otimes -))\simeq \Map_\Sp(\THH, k\otimes\THH)\simeq k[S^1]$$ by \Cref{thm:endplainpowers} (and the claim, in the proof, that $\Map(\THH,X~\otimes~\THH)~\simeq~X~\otimes~\Map(\THH,\THH)$).

    One checks, e.g. using $k$-linearity and compatibility with the $S^1$-actions, that the map $k[S^1]\to k[S^1]$ thus obtained is the identity, and thus we are done. 
\end{proof}
\begin{ex}
    Let $k$ be a (classical) perfect field of characteristic $p$. In this case, by Bökstedt periodicity (see e.g. \cite{KNBök}), we have a presentation of $k$ as a $\THH(k)$-module, or as a $k\otimes\THH(k)$-module, so we can actually compute these mapping spectra quite explicitly.
    
    Indeed, letting $\sigma \in\pi_2\THH(k)$ denote the Bökstedt element, we have $\pi_*\THH(k)\cong k[\sigma]$, a polynomial algebra. Thus $k\simeq \THH(k)/\sigma$ as a $\THH(k)$-module, and so $$\map_{\THH(k)}(k,k)[S^1]\simeq \fib(k\xrightarrow{0}\Omega^2k)[S^1] \simeq (k\oplus\Omega^3 k)[S^1]$$
    and we find, on top of the expected $k[S^1]$, a degree $-3$-generator (plus the $S^1$-action). 

    The calculation as bimodule is only a bit more involved: $$\map_{k\otimes \THH(k)}(k,k)\simeq \map_k(k\otimes_{k\otimes \THH(k)}k,k)$$
    Using the universal property of $\THH(k)$ as a commutative algebra, we find that $k\otimes_{k\otimes\THH(k)}k \simeq \THH(k)\otimes ( k\otimes_{\THH(k)}k)$, and using now Bökstedt's equivalence $\THH(k)\simeq k[\Omega S^3]$, we find $k\otimes_{\THH(k)} k\simeq k[S^3]$, so that, in total, 
    $$\map_{k\otimes \THH(k)}(k,k)\simeq \prod_n (\Omega^{2n} k\oplus \Omega^{2n+3}k)$$
    To get $k$-linear endomorphisms of $\HH_k$, one simply adjoins $[S^1]$ to this. 
\end{ex}
\begin{rmk}\label{rmk:hhkname}
    In the previous examples, it is not clear to the author what  the extra operations ``do''. Similarly to the situation in \Cref{rmk:calgname}, we do not know how to ``name'' them, and it would probably be worthwhile to spend time figuring out what these operations do, or are. 
\end{rmk}

\section{Operations on $\THH$ of $\Oo$-algebras}\label{section:opsO}
The goal of this section is to initiate the study of endomorphisms of $\THH$ viewed as a functor $\Alg_\Oo(\Cat^\perf)\to \Sp$, that is, to study the extra operations that arise on $\THH(C)$ when $C$ has a particular kind of multiplicative structure encoded by a one-colored \operad $\Oo$. Our main result in this section is \Cref{thm:opns}.

As is clear from the statement of \Cref{thm:opns}, the case of endomorphisms of $\THH$ viewed as a functor on $\Alg_\Oo(\Cat^\perf)$ is more subtle.

We first describe the general approach to this question, and then specialize to get the precise results that we claimed.

We let $U:\Alg_\Oo(\Cat^\perf)\to \Cat^\perf$ denote the forgetful functor, with a left adjoint $F$ such that $UF\simeq \bigoplus_n (\Oo(n)\otimes(-)^{\otimes n})_{h\Sigma_n}$ \cite[Proposition 3.1.3.13]{HA}. 

The idea now is that by general adjunction nonsense, $$\Map_{\Fun(\Alg_\Oo(\Cat^\perf),\Sp)}(\THH \circ U, \THH \circ U)\simeq \Map_{\Fun(\Cat^\perf,\Sp)}(\THH, \THH \circ UF)$$
and so we are ``left with'' understanding $\THH \circ UF$. 

For this, we compute each of the individual terms of $$\THH\circ UF = \bigoplus_n \THH((\Oo(n)\otimes (-)^{\otimes n})_{h\Sigma_n})$$ 
\begin{prop}\label{prop:thhcolimtext}
 Let $O$ be a space with a $\Sigma_n$-action. There is an equivalence, natural in $C\in\Cat^\perf$: $$\THH((O\otimes C^{\otimes n})_{h\Sigma_n})\simeq \bigoplus_{\sigma\in\Sigma_n/\mathrm{conj}}(O^\sigma \otimes \THH(C)^{\otimes n(\sigma)})_{hC(\sigma)}$$ where for $\sigma\in\Sigma_n$, $n(\sigma)$ is the number of cycles appearing in $\sigma$, $C(\sigma)$ is the centralizer of $\sigma$ in $\Sigma_n$ and $O^\sigma = L(O_{h\Sigma_n})\times_{LB\Sigma_n}\{\sigma\}$ with its residual $C(\sigma)$-action\footnote{Here, $L$ denotes the free loop space, i.e. $\Map(S^1,-)$.}.  
\end{prop}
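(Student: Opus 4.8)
The plan is to reduce the computation of $\THH\bigl((O\otimes C^{\otimes n})_{h\Sigma_n}\bigr)$ to the calculation of $\THH$ of groupoid-indexed colimits from \cite{CCRY}, which generalizes the Blumberg--Cohen--Schlichtkrull formula for $\THH$ of Thom spectra, and whose invocation is flagged as the key input in the introduction. First I would rewrite the colimit $(O\otimes C^{\otimes n})_{h\Sigma_n}$ as a colimit indexed by the action groupoid $O_{h\Sigma_n}$, viewed as a $\Cat^\perf$-valued local system: concretely, $(O\otimes C^{\otimes n})_{h\Sigma_n} = \colim_{O_{h\Sigma_n}} (\text{constant value } C^{\otimes n} \text{ with its } \Sigma_n\text{-twist})$, where the local system sends a point of $O$ to $C^{\otimes n}$ and a loop to the corresponding permutation autoequivalence. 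One must be a little careful since $C^{\otimes n}$ is a tensor product of $\Cat^\perf$'s and the relevant colimit is taken in $\Cat^\perf$ (equivalently $\PrL_{\st,\omega}$); this is exactly the setting handled in \cite{CCRY}.

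Next I would apply the main $\THH$-of-groupoid-colimits formula: $\THH$ of a colimit over a space $X$ (or groupoid) of a local system $\mathcal C_\bullet$ of stable categories is the colimit over the free loop space $LX$ of $\THH$ of the ``monodromy-twisted'' categories, i.e. $\THH(\colim_X \mathcal C) \simeq \colim_{\gamma\in LX}\THH(\mathcal C_\gamma^{\mathrm{tw}})$, where for a loop $\gamma$ at $x$, $\mathcal C_\gamma^{\mathrm{tw}}$ is $\mathcal C_x$ equipped with the monodromy autoequivalence and one takes its ``twisted $\THH$''. In our case $X = O_{h\Sigma_n}$, so $LX = L(O_{h\Sigma_n})$, which fibers over $LB\Sigma_n$; and $LB\Sigma_n \simeq \coprod_{\sigma\in\Sigma_n/\mathrm{conj}} B C(\sigma)$ is the standard description of the free loop space of $B\Sigma_n$. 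Pulling back along a component $BC(\sigma)\hookrightarrow LB\Sigma_n$ gives, by definition, $O^\sigma$ with its residual $C(\sigma)$-action, and the fiber of $L(O_{h\Sigma_n})$ over that component is $(O^\sigma)_{hC(\sigma)}$. So the formula becomes $\THH((O\otimes C^{\otimes n})_{h\Sigma_n}) \simeq \bigoplus_{\sigma} \bigl(\THH\text{ of the }\sigma\text{-twisted }C^{\otimes n}\bigr)_{hC(\sigma)}$, up to checking that the relevant colimit over the space $(O^\sigma)_{hC(\sigma)}$ splits as orbits of a twisted-$\THH$ local system — this is formal once one knows the local system on $O^\sigma$ is constant (which it is, since the monodromy only depends on $\sigma\in LB\Sigma_n$, not on the point of $O^\sigma$).

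The remaining point is to identify the $\sigma$-twisted $\THH$ of $C^{\otimes n}$. Here I would use the multiplicativity of $\THH$ together with the cyclic-invariance/``$\THH$ of a tensor category with a cyclic permutation'' computation: if $\sigma$ has cycle type with $n(\sigma)$ cycles of lengths $\ell_1,\dots,\ell_{n(\sigma)}$, then $C^{\otimes n}$ with the $\sigma$-permutation splits as a tensor product over the cycles, and the $\THH$ of $C^{\otimes \ell}$ equipped with a single $\ell$-cycle permutation is $\THH(C)$ — this is the categorical analogue of the fact that the cyclic bar construction of $A^{\otimes \ell}$ with the cyclic rotation recovers $\THH(A)$, and it should follow from \Cref{app:acttr} or directly from the cyclic-nerve description. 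Thus the $\sigma$-twisted $\THH(C^{\otimes n})$ is $\THH(C)^{\otimes n(\sigma)}$, and assembling over cycles and over $\sigma$ gives the claimed formula $\bigoplus_{\sigma\in\Sigma_n/\mathrm{conj}} (O^\sigma \otimes \THH(C)^{\otimes n(\sigma)})_{hC(\sigma)}$. The main obstacle I anticipate is bookkeeping: matching the residual $C(\sigma)$-action on $O^\sigma$ coming out of the $LB\Sigma_n$-decomposition with the $C(\sigma)$-action permuting the equal-length cycles (hence permuting the tensor factors of $\THH(C)^{\otimes n(\sigma)}$), and ensuring naturality in $C$ is preserved throughout — in other words, the content is not any single step but making the identifications $L(O_{h\Sigma_n})\to LB\Sigma_n$, $O^\sigma := L(O_{h\Sigma_n})\times_{LB\Sigma_n}\{\sigma\}$, and the cycle-wise splitting all compatible as $C(\sigma)$-equivariant, $\Cat^\perf$-natural data.
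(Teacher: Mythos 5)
Your proposal is correct and follows essentially the same route as the paper's proof: both hinge on the free-loop-space formula from \cite{CCRY}, the decomposition $LB\Sigma_n\simeq\coprod_{\sigma}BC(\sigma)$, and the cycle-wise identification $\THH(C^{\otimes n};\sigma)\simeq\THH(C)^{\otimes n(\sigma)}$ (\Cref{lm:trperm}, \Cref{cor:trpermnat}), with the action-matching worry you flag at the end handled by \Cref{prop:actontr} in the appendix. The only cosmetic difference is that you index the colimit by the total space $O_{h\Sigma_n}$ from the outset, whereas the paper applies \Cref{thm:CCRY} with $A=B\Sigma_n$ and $\zeta = O\otimes C^{\otimes n}$, splits $\THH(O\otimes C^{\otimes n};\sigma)\simeq\THH(O;\sigma)\otimes\THH(C^{\otimes n};\sigma)$, and identifies the first factor with $O^{\sigma}$ via \Cref{lm:trspace} --- but the proof of \Cref{lm:trspace} is exactly the base-change-along-$p\colon O_{h\Sigma_n}\to B\Sigma_n$ step you sketch, so the two arguments are really the same.
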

\begin{rmk}
    The action of $C(\sigma)$ on $\THH(C)^{\otimes n(\sigma)}$ is a bit subtle to describe. Part of the point here is that there is a functor $\Sp^{BS^1}\to \Sp^{BC(\sigma)}$ of the form $X\mapsto X^{\otimes n(\sigma)}$ such that the above is obtained by applying it to $\THH(C)$ with its $S^1$-action. 

    In extreme cases, the action is easier to describe: if $\sigma= 1$ is the trivial permutation, $n(\sigma)=~n, C(\sigma) =~\Sigma_n$, and the action is simply given by the permutation action. If $\sigma$ is the\footnote{There is only one up to conjugacy.} length $n$ cycle, $n(\sigma) =1$ and $C(\sigma)$ is generated by $\sigma$, i.e. a cyclic group of order $n$, and the action in this case is given by restricting the $S^1$-action. 
\end{rmk}
Given the above remark, we cannot be too precise about the $C(\sigma)$-action for general $\sigma\in\Sigma_n$, and the specifics we will need about this formula are the following:
\begin{prop}\label{prop:thhcolimweak}
    Let $O$ be a space with a $\Sigma_n$-action. There is an equivalence, natural in $C\in\Cat^\perf$: $$\THH((O\otimes C^{\otimes n})_{h\Sigma_n})\simeq (O^{C_n}\otimes\THH(C))_{hC_n}\oplus F(C)$$ 
    where the $C_n$-action on $O^{C_n}\otimes \THH(C)$ is diagonal, the one on $\THH(C)$ being restricted from $S^1$, and such that $C\mapsto F(C)$ factors through $\Mot_\loc$ and is a (finite) direct sum of $k$-homogeneous functors, $1<k\leq n$. 
\end{prop}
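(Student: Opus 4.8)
The plan is to deduce \Cref{prop:thhcolimweak} from the more refined decomposition in \Cref{prop:thhcolimtext} by isolating the one summand that is genuinely $1$-homogeneous in $C$ and absorbing everything else into an error term. Concretely, applying \Cref{prop:thhcolimtext} with $O$ a space carrying a $\Sigma_n$-action, we obtain
$$\THH((O\otimes C^{\otimes n})_{h\Sigma_n})\simeq \bigoplus_{\sigma\in\Sigma_n/\mathrm{conj}}(O^\sigma \otimes \THH(C)^{\otimes n(\sigma)})_{hC(\sigma)}.$$
In this sum, $n(\sigma)$ is the number of cycles of $\sigma$, and the summand for a class $\sigma$ contributes a functor of $C$ that is ``$n(\sigma)$-homogeneous'' in the appropriate sense (it is built from $\THH(C)^{\otimes n(\sigma)}$, which factors through $\Mot_\loc$ where $\THH$ is exact, and hence is $n(\sigma)$-homogeneous there). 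The unique conjugacy class with $n(\sigma)=1$ is that of the length-$n$ cycle $\gamma_n$; all other classes have $2\leq n(\sigma)\leq n$. So I would split off the $\gamma_n$ summand as the main term and collect the rest as $F(C)$.

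First I would record that for $\sigma=\gamma_n$ the length-$n$ cycle, the centralizer $C(\gamma_n)$ is the cyclic group $C_n$ generated by $\gamma_n$, and by the last sentence of the remark following \Cref{prop:thhcolimtext}, the $C_n$-action on the single tensor factor $\THH(C)$ is the restriction of the $S^1$-action. Moreover $O^{\gamma_n} = L(O_{h\Sigma_n})\times_{LB\Sigma_n}\{\gamma_n\}$ is precisely what the statement calls $O^{C_n}$ — here one uses that a loop in $O_{h\Sigma_n}$ sitting over the class $\gamma_n\in\pi_1 B\Sigma_n$ is the same datum as a point of $O$ fixed by $\gamma_n$, i.e. by the cyclic subgroup $C_n$, together with its residual $C_n = C(\gamma_n)$-action. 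Hence this summand is exactly $(O^{C_n}\otimes\THH(C))_{hC_n}$ with the diagonal action described in the statement, which is the main term.

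Second, for the remaining conjugacy classes I would argue that each summand $(O^\sigma\otimes\THH(C)^{\otimes n(\sigma)})_{hC(\sigma)}$, as a functor of $C$, factors through $\Mot_\loc$ and is $n(\sigma)$-homogeneous there: indeed $\THH$ is a localizing invariant so it factors through $\Mot_\loc$, on which it is exact (this is the input from \cite{Motloc} used already in the proof of \Cref{thm:endplainpowers}); therefore $C\mapsto\THH(C)^{\otimes n(\sigma)}$ is $n(\sigma)$-homogeneous, and tensoring with the fixed space $O^\sigma$ and taking homotopy orbits by the finite group $C(\sigma)$ preserves both the factorization through $\Mot_\loc$ and $n(\sigma)$-homogeneity (homotopy orbits are colimits, and $\Mot_\loc$ and the subcategory of $k$-homogeneous functors are closed under colimits). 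Setting $F(C) := \bigoplus_{\sigma\neq\gamma_n}(O^\sigma\otimes\THH(C)^{\otimes n(\sigma)})_{hC(\sigma)}$, which is a finite direct sum since $\Sigma_n$ has finitely many conjugacy classes, gives the desired error term, with all homogeneity degrees $k=n(\sigma)$ satisfying $1<k\leq n$. Combining the two pieces yields the claimed splitting.

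The main obstacle is bookkeeping rather than conceptual: one must make sure the identification of the $\gamma_n$-summand with $(O^{C_n}\otimes\THH(C))_{hC_n}$ is compatible with the $C_n$-action as stated — that the residual $C(\gamma_n)$-action on $O^{\gamma_n}$ really is the restriction of the $\Sigma_n$-action to the stabilizer $C_n$ of a chosen fixed point, and that it is the \emph{diagonal} action when combined with the restricted $S^1$-action on $\THH(C)$ — which is exactly the content of the remark after \Cref{prop:thhcolimtext} that the author flags as subtle. The homogeneity claims for the other summands, by contrast, are formal once one invokes exactness of $\THH$ on $\Mot_\loc$ and the stability of $k$-homogeneous functors under finite homotopy orbits.
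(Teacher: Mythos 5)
Your deduction is correct, and the content is essentially the same as the paper's. The one structural difference is that the paper gives a single joint proof of \Cref{prop:thhcolimtext} and \Cref{prop:thhcolimweak}: starting from \Cref{thm:CCRY} it writes $\THH((O\otimes C^{\otimes n})_{h\Sigma_n})$ as a colimit over $LB\Sigma_n$, splits off the $BC_n$-component (corresponding to the length-$n$ cycle) at the level of the indexing space, and treats the rest as a colimit of terms $X\otimes\THH(C)^{\otimes k}$, $1<k\leq n$ — rather than first establishing the full conjugacy-class decomposition and then isolating the $\gamma_n$-summand as you do. Your identification of the main term, including the $C_n$-action on $O^{\gamma_n}=O^{C_n}$ and on $\THH(C)$ via restriction from $S^1$, and your homogeneity argument for the error summands (exactness of $\THH$ on $\Mot_\loc$, then closure of $k$-homogeneity under tensoring with a fixed spectrum and under homotopy orbits by a finite group), line up with what the paper uses. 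You correctly flag the identification of the $C(\gamma_n)$-action as the delicate point; this is \Cref{prop:actontr} in the appendix, fed through \Cref{cor:trpermnat}.
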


The main tool for this proposition is \cite[Corollary 7.15]{CCRY}, which we recall for convenience of the reader. We specialize it to the relevant context. In the notation of that paper, this means setting $\mathscr{C} = \Sp$. 
\begin{rmk}
Note that in \cite{CCRY}, we use the trace functor $\Tr$ as in \cite{HSS}. This is defined on $\Cat^{\mathrm{lace}}$ in the notation of \cite{Victor1}, and is a trace-like exact functor, whose value at $(\Sp,\id_\Sp)$ is $\Sph$. Thus, by the universal property of $\THH$ proved in \cite{Victor1}, there is a canonical map $\THH\to \Tr$ inducing an equivalence on $(\Sp,\id_\Sp)$. Thus, it is an equivalence everywhere, see e.g. \cite[Proposition 4.2.8]{Thesis} (the proof adapts readily to the context of \cite{Victor1}).

See also \cite[Corollary 4.2.67]{Thesis} for a more structured comparison between $\Tr$ and $\THH$.  
\end{rmk}
\begin{defn}
    Let $A$ be a space and $\zeta: A\to \PrL_\st$ be a functor with values in dualizable objects\footnote{For our purposes, the reader can replace this with compactly generated \categories, or equivalently think of a functor $A\to\Cat^\perf$.}, and let $\chi_\zeta:LA\to \Sp$ denote the functor $$\gamma\mapsto \THH(\zeta(\gamma(0)); \zeta\circ \gamma)$$ 
\end{defn}
\begin{thm}[{\cite[Corollary 7.15]{CCRY}}]\label{thm:CCRY}
      Let $A$ be a space and $\zeta: A\to \PrL_\st$ be a functor with values in dualizable objects. The colimit $\colim_A \zeta$ is still dualizable, and there is a natural equivalence: $$\THH(\colim_A \zeta)\simeq\colim_{LA} (\chi_\zeta)$$
\end{thm}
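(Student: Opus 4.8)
The plan is to deduce the formula from the formalism of traces (Euler characteristics) in symmetric monoidal \categories. The key preliminary input is the identification of $\THH$ with a categorical dimension: for a dualizable object $\mathcal{D}$ of $\PrL_\st$ (with the Lurie tensor product) and a colimit-preserving endofunctor $F\colon\mathcal{D}\to\mathcal{D}$, the twisted topological Hochschild homology $\THH(\mathcal{D};F)$ is naturally the trace $\tr(F)$, which, via the identification of colimit-preserving endofunctors of $\Sp$ with $\Sp$ itself, is an object of $\Sp$; in particular $\THH(\mathcal{D})=\tr(\id_{\mathcal{D}})=\dim(\mathcal{D})$. This is the bridge between $\THH$ and pure higher-categorical trace theory, available through the machinery of \cite{HSS} used in \cite{CCRY}; in this language $\chi_\zeta$ is the functor $\gamma\mapsto\tr(\zeta(\gamma))$, where $\zeta(\gamma)$ denotes the monodromy equivalence of $\zeta$ around the loop $\gamma$ (note that $A$ is a space, so every morphism of $A$, hence every transition functor $\zeta(\gamma)$, is an equivalence). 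Granting this, the theorem says exactly that the dimension of $\colim_A\zeta$ is the colimit over $LA$ of the pointwise traces $\tr(\zeta(\gamma))$.

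First I would establish that $X:=\colim_A\zeta$ is dualizable, with dual computed pointwise as $X^\vee\simeq\colim_A\zeta^\vee$. This is the first place the groupoidness of $A$ is used: the evaluation and coevaluation maps for $X$ are assembled from those of the $\zeta(a)$ by ``integrating over $A$'', which is legitimate because $\PrL_\st$ is ambidextrous along spaces for dualizable-valued local systems (equivalently, for such a local system the canonical map $\colim_A\to\lim_A$ is an equivalence), so one can pass freely between $\colim_A$ — convenient for tensoring, as $\otimes$ preserves colimits in each variable — and $\lim_A$ — convenient for producing the structure maps of a duality. Verifying the triangle identities then reduces to the triangle identities for each $\zeta(a)$ together with coherence of the $A$-indexed data; I expect this step to be technical but essentially formal, and in any case it is carried out in \cite{CCRY}.

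The heart of the argument is the computation of $\tr(\id_X)$. Unwinding $\tr(\id_X)=\mathrm{ev}_X\circ\tau\circ\mathrm{coev}_X$ and substituting $X=\colim_A\zeta$, $X^\vee=\colim_A\zeta^\vee$, the object $X^\vee\otimes X$ becomes $\colim_{A\times A}(\zeta^\vee\boxtimes\zeta)$; the ambidextrous structure forces $\mathrm{coev}_X$ to factor through the ``diagonal'' subcolimit $\colim_{a\in A}(\zeta(a)^\vee\otimes\zeta(a))$, and dually $\mathrm{ev}_X\circ\tau$ factors through a wrong-way collapse along the diagonal $A\to A\times A$ (legitimate precisely because this diagonal is ``proper'', its fibers being the loop spaces of $A$). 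Composing these, the surviving index \category is the iterated fiber product $A\times_{A\times A}A$, which is exactly the free loop space $LA=\map(S^1,A)$, and the contribution at a point $\gamma\in LA$ is $\mathrm{ev}_{\zeta(\gamma(0))}\circ\zeta(\gamma)\circ\mathrm{coev}_{\zeta(\gamma(0))}=\tr(\zeta(\gamma))=\chi_\zeta(\gamma)$. Hence $\THH(X)=\tr(\id_X)\simeq\colim_{LA}\chi_\zeta$, naturally in $\zeta$ (and hence with respect to any further functoriality, e.g.\ in an ambient category, when the theorem is applied later), since every construction above is functorial. This last step — making precise that $\mathrm{ev}\circ\tau\circ\mathrm{coev}$, assembled from the ambidextrous data, is \emph{exactly} the colimit over $A\times_{A\times A}A$ of pointwise traces — is the main obstacle and carries the genuinely new content: it is a categorified Lefschetz/Atiyah--Bott fixed-point computation, while everything else is either recollection or soft category theory.

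As a sanity check, specializing to $A=BG$ gives $X=$ modules over a (possibly twisted) group algebra, and the formula becomes a Burghelea-type decomposition $\THH(X)\simeq\bigoplus_{[g]}\THH(\zeta(\ast);\zeta(g))_{hC_G(g)}$; taking $\zeta$ to factor through $\Pic(\Sph)$ recovers Blumberg--Cohen--Schlichtkrull's computation of $\THH$ of Thom spectra, which \cite{CCRY} generalizes.
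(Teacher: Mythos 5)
This is a recollection, not a result of the present paper: the theorem header itself carries the citation to \cite[Corollary 7.15]{CCRY}, and the paper offers no independent proof, so there is no ``paper's own proof'' to compare against. With that caveat, your sketch does capture the shape of the CCRY argument: identify $\THH$ of a dualizable object of $\PrL_\st$ with its categorical dimension, use that space-indexed colimits of dualizable objects in $\PrL_\st$ remain dualizable (which, as you say, rests on the equivalence $\colim_A \simeq \lim_A$ for local systems of presentable categories on a space $A$), and then compute $\tr(\id_{\colim_A\zeta})$ by unwinding $\mathrm{ev}\circ\tau\circ\mathrm{coev}$ into a colimit of pointwise traces indexed by $A\times_{A\times A}A\simeq LA$, all in the framework of higher traces from \cite{HSS}. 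Your sanity checks ($A=BG$ recovering Burghelea, factoring through $\Pic(\Sph)$ recovering Blumberg--Cohen--Schlichtkrull) are exactly the examples CCRY highlight.

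One phrase deserves a flag. You justify the wrong-way collapse along $\Delta\colon A\to A\times A$ by saying the diagonal is ``proper, its fibers being the loop spaces of $A$.'' That heuristic is misleading here: the theorem is for \emph{arbitrary} spaces $A$, and $\Omega A$ has no finiteness or compactness in general, so ``properness'' is not what licenses the transfer. What licenses it is the same ambidexterity you already invoked — for local systems of presentables on spaces, the norm $\colim\to\lim$ is an equivalence unconditionally, so one gets a canonical ``integration'' along any map of spaces, in particular along $\Delta$. This is also why the result has no hypotheses on $A$. Apart from that, your writeup is a faithful high-level account of the CCRY proof, but it explicitly defers the hard step (that $\mathrm{ev}\circ\tau\circ\mathrm{coev}$ \emph{is} the colimit over $LA$ of pointwise traces) to \cite{CCRY}; that is appropriate for a recollection, but it is not itself a proof.
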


We need two more trace-calculations. The first is the trace of a permutation: 
\begin{lm}\label{lm:trperm}
Fix $\sigma\in\Sigma_n$ for some $n\geq 0$, and let $\cC$ be a symmetric monoidal \category and $x\in\cC$ a dualizable object. There is an equivalence $$\tr(x^{\otimes n}; \sigma)\simeq \dim(x)^{\otimes n(\sigma)}$$ where $n(\sigma)$ is the number of cycles in $\sigma$, in $\End(\one_\cC)$. 

    Furthermore, if $\sigma$ is a length $n$ cycle, the $C(\sigma) = \langle \sigma\rangle$-action on the right, induced by functoriality in $LB\Sigma_n$, is mapped to the $C_n\subset S^1$-action on $\dim(x)$ (which is $\dim(x)^{\otimes n(\sigma)}$ since $n(\sigma)=1$). 
\end{lm}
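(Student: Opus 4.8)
The plan is to handle the two assertions in sequence, first establishing the unequivariant identity $\tr(x^{\otimes n};\sigma)\simeq \dim(x)^{\otimes n(\sigma)}$ and then tracking the $C(\sigma)$-action in the cyclic case. For the first part, I would argue by decomposing $\sigma$ into its disjoint cycles. If $\sigma = \sigma_1\cdots\sigma_{n(\sigma)}$ with $\sigma_j$ a cycle of length $\ell_j$, then the endomorphism $x^{\otimes n}\xrightarrow{\sigma} x^{\otimes n}$ of the dualizable object $x^{\otimes n}$ is, after reindexing the tensor factors, a tensor product of the permutation endomorphisms $x^{\otimes\ell_j}\xrightarrow{\sigma_j} x^{\otimes\ell_j}$. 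Since the trace is monoidal — $\tr(f\otimes g) = \tr(f)\otimes\tr(g)$ for endomorphisms of dualizable objects — this reduces the computation to the case of a single $\ell$-cycle, where I need to show $\tr(x^{\otimes\ell};c_\ell)\simeq\dim(x)$ for $c_\ell$ the cyclic permutation. This is the standard ``cyclic permutation has trace $=\dim$'' computation: writing out the trace as $\one\xrightarrow{\mathrm{coev}} x^{\otimes\ell}\otimes (x^{\otimes\ell})^\vee \xrightarrow{c_\ell\otimes\id} x^{\otimes\ell}\otimes(x^{\otimes\ell})^\vee \xrightarrow{\mathrm{ev}}\one$ and using the identification $(x^{\otimes\ell})^\vee\simeq (x^\vee)^{\otimes\ell}$ together with the symmetry, one slides the coevaluations/evaluations around so that the whole composite collapses to a single $\one\xrightarrow{\mathrm{coev}}x\otimes x^\vee\xrightarrow{\mathrm{ev}}\one$, i.e. $\dim(x)$. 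I would probably just cite this as well-known (it is, e.g., the multiplicativity of traces under the norm, or the $\THH$-of-a-tensor-computation), since grinding through the string diagram is routine.

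For the equivariant refinement, the point is that when $\sigma$ is a full $n$-cycle, $C(\sigma) = \langle\sigma\rangle\cong C_n$ acts on $\tr(x^{\otimes n};\sigma)$ through the functoriality of the trace in the target of the map $B\langle\sigma\rangle\to LB\Sigma_n$ (a conjugacy class of loops), and I want to match this with the restriction to $C_n\subset S^1$ of the canonical $S^1$-action on $\dim(x) = \tr(\id_x)$. The cleanest route is to recognize that this is exactly the content of one of the ``elementary calculations of certain cyclic group actions on specific traces'' promised in \Cref{app:acttr}; so I would set up the comparison as follows. The $n$-cycle $\sigma$ generates a $C_n$ acting on $\{1,\dots,n\}$ freely and transitively, so $x^{\otimes n}$ with its $\sigma$-action is the induced/norm object $\bigotimes_{C_n} x$, and its $C_n$-equivariant trace is computed by the same string-diagram manipulation as above but carried out $C_n$-equivariantly: the identification of $\tr(\bigotimes_{C_n}x;\text{shift})$ with $\dim(x)$ is realized by a rotation of a circle of $n$ copies of the ``$\mathrm{coev}$-then-$\mathrm{ev}$'' diagram, and the generator of $C_n$ acts by rotating this circle by one step, which after the collapse is precisely the $n$-th root of the full rotation generating the $S^1$-action on $\dim(x)$. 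Concretely I would phrase the $S^1$-action on $\dim(x)$ via the $\THH$-perspective — $\dim(x)\simeq\THH(\mathrm{End}(x))$-ish, or more precisely as the value of the trace functor on $(\cC, x)$ viewed inside a lace/cyclic structure — and then the $C_n$-action on $\tr(x^{\otimes n};\sigma)$ is the one coming from viewing this as $\THH$ of an $n$-fold ``edgewise'' or cyclic-bar construction, whose residual cyclic symmetry is by definition the restriction along $C_n\hookrightarrow S^1$.

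The main obstacle is the second, equivariant part: making the identification of the $C(\sigma)$-action precise rather than merely plausible. The issue is that ``the $C(\sigma)$-action induced by functoriality in $LB\Sigma_n$'' and ``the restriction of the $S^1$-action on $\dim(x)$'' live a priori in different models, and exhibiting a comparison equivalence that is simultaneously an equivalence of $C_n$-objects requires being careful about which circle is acting and how the collapse of the string diagram interacts with it. I expect the honest proof to either (a) defer entirely to a lemma in \Cref{app:acttr} set up for exactly this purpose, or (b) reduce to the universal case $\cC = \Sp$, $x = \Sph^{S^1}$-type generator, or better $\cC$ the free symmetric monoidal stable \category on one dualizable object, where both actions can be computed by hand and compared. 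I would take route (a), stating the lemma here and pushing the $C_n$-equivariant string-diagram bookkeeping into the appendix, noting only that unequivariantly the two actions obviously agree since both underlying objects are $\dim(x)$ and the comparison map is the evident one.
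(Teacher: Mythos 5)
Your proposal is correct and follows essentially the same overall skeleton as the paper: decompose $\sigma$ into disjoint cycles, use monoidality of traces to reduce to a single $\ell$-cycle, establish the unequivariant identity $\tr(x^{\otimes\ell};c_\ell)\simeq\dim(x)$, and defer the $C_n$-equivariance of the identification (in the long-cycle case) to \Cref{app:acttr}. The one genuine difference is how the single-cycle computation is handled: you propose a direct string-diagram ``slide the (co)evaluations around'' argument, citing it as standard, whereas the paper first invokes the cobordism hypothesis in dimension one (Harpaz, after Lurie's sketch) to reduce to $\Cob_1$, where the trace is a closed $1$-manifold whose number of components is counted combinatorially by following the strands. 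Your route is more elementary for the unequivariant part, but the paper's choice pays off later: the same reduction to $\Cob_1$ is precisely what drives \Cref{prop:actontr} (via the further passage to $\cospan(\Ss^\fin)$ and the Yoneda argument in the appendix), so a single universal-case reduction handles both halves uniformly; you would need to re-enter the universal case separately when expanding your ``route (a)'' deferral. You also correctly flag the real subtlety — which circle is acting and whether the comparison is equivariant — and the paper's footnote that the cycle-type decomposition is not natural in $\sigma\in LB\Sigma_n$ is exactly why the naturality upgrade (\Cref{cor:trpermnat}, \Cref{prop:actontr}) has to be argued independently of that decomposition, a point worth making explicit rather than leaving implicit in a deferral.
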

\begin{proof}
First, we note that the pair $(x^{\otimes n},\sigma)$ decomposes as a tensor product over the cycles in $\sigma$ of terms of the form $(x^{\otimes \ell}, C_\ell)$ for each cycle $C_\ell$ in $\sigma$\footnote{Though this decomposition is not natural in $\sigma \in LB\Sigma_n$, there we would have to worry about cycle types too.}. Therefore it suffices to identifty $\tr(x^{\otimes n};C_n)$ where $C_n$ is the length $n$ cycle.

By \cite{harpazcob}\footnote{Harpaz gave a full proof of the one dimension cobordism hypothesis, previously sketched in all dimensions by Lurie in \cite{luriecob}.}, the one-dimensional cobordism \category $\Cob_1$ is the universal symmetric monoidal \category on a dualizable object, thus the calculation of the trace and the $C(\sigma)$ -action can be performed entirely there. 

Let $1^+$ denote the universal dualizable object in $\Cob_1$, $1^-$ its dual, and $n^+ := (1^+)^{\otimes n}$ (similarly for $n^-$). Recall that $\End(\one_{\Cob_1})= \coprod_n (BS^1)^n_{h\Sigma_n}$ is given by the space of $1$-dimensional, closed oriented manifolds, i.e. disjoint unions of oriented circles. So, to identify the object $\tr(n^+,C_n)$ as a manifold it suffices to identify its number of connected components. 

Recall that the cobordism representing the coevaluation looks as follows (we read our cobordisms from left to right):

\includegraphics[scale=0.05]{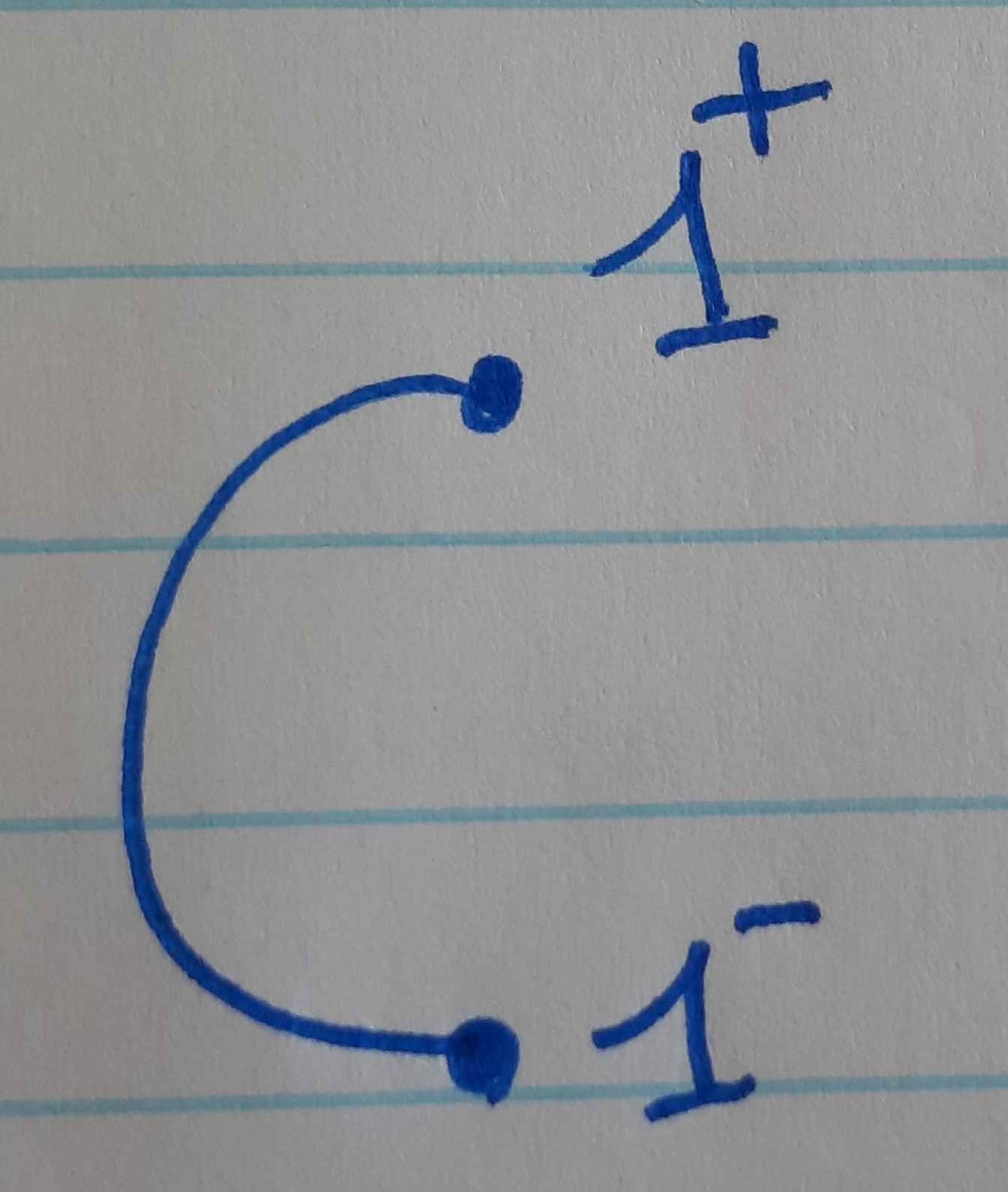}

and the cobordism representing the evaluation looks like:

\includegraphics[scale=0.04]{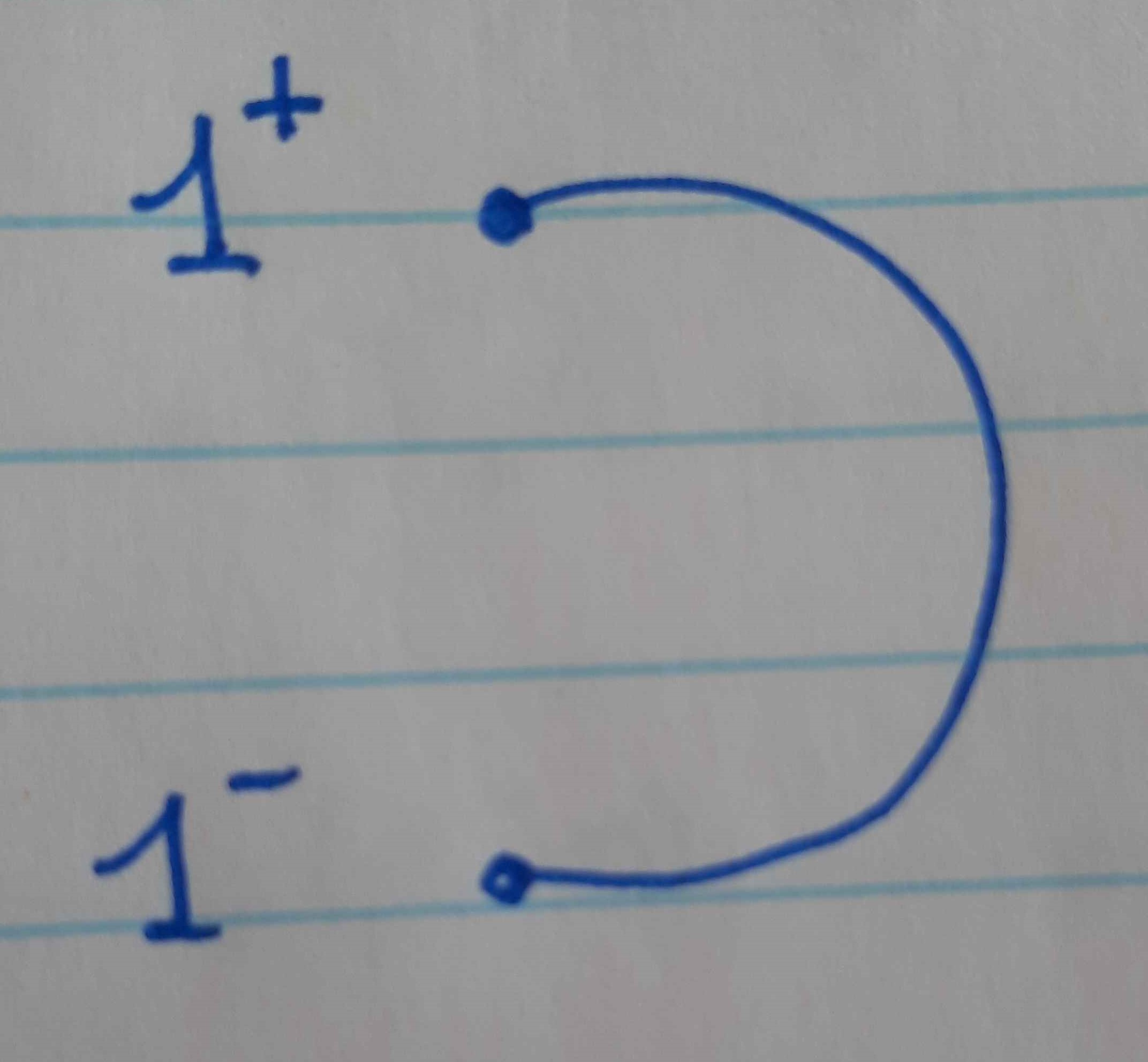}

The manifold representing $\tr(n^+,C_n)$ is obtained by taking the cobordism representing $C_n$ from $n^+$ to itself, the one representing $\id$ from $n^-$ to itself, and adding evaluations and coevaluations. 

We let $(i^{\pm },0), i\in\{0,n-1\}$ denote the endpoints in the source, $(i^{\pm },1)$ denote the endpoints in the target. By definition, $(i^+,0)$ is related by the cobordism to $(\sigma(i)^+,1)$; this is in turn related by the evaluation to $(\sigma(i)^-,1)$, which is related by the $\id$-cobordism to $(\sigma(i)^-, 0)$ which, finally, is related by the coevaluation to $(\sigma(i)^+,0)$. 

Since $\sigma= C_n$ is transitive (by definition of cycle!) it follows that all the points $(i^+,0)$ are connected. Continuing this argument shows that all points are connected, and thus $\tr(n^+,C_n)\in\End(\one_{\Cob_1})$ is a connected manifold, i.e. the circle, i.e. $\dim(1^+)$. 

Here is a pictorial example: 

\includegraphics[scale=0.1]{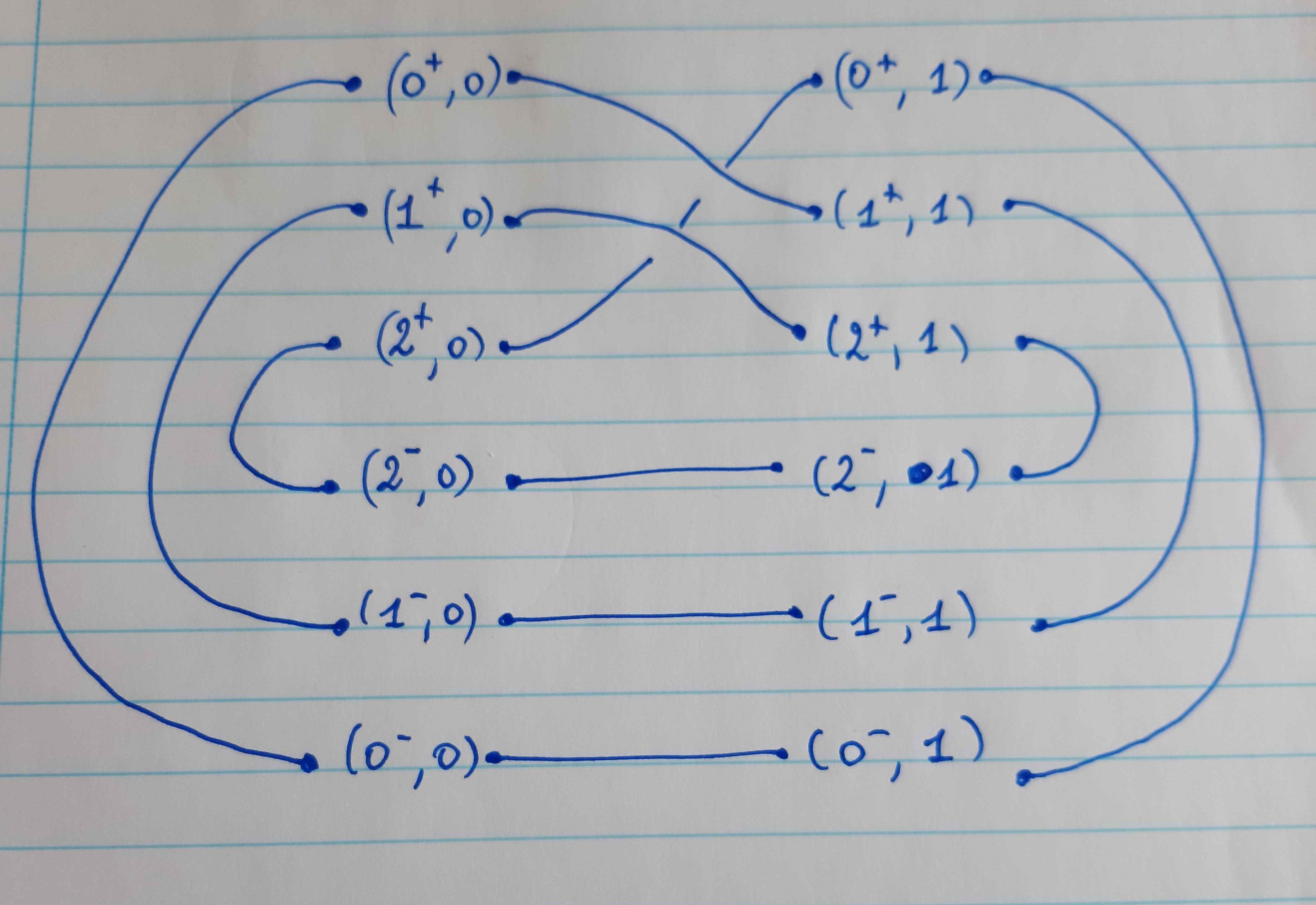}

We now need to identify the $C_n$-action on this circle. The proof that it is as we claim is not so difficult, but it takes up a bit of space. As it is mostly unrelated to the rest of the section, we defer it to \Cref{app:acttr}, specifically see \Cref{prop:actontr}. 
\end{proof}
\begin{cor}\label{cor:trpermnat}
    Let $B$ be a symmetric monoidal $(\infty,2)$-category. The functor $B^\dbl\to \End(\one_B)$ given by $b\mapsto \tr(b^{\otimes n};\sigma)$ is naturally equivalent to $$b\mapsto \dim(b)^{\otimes n(\sigma)}$$ 

    For $\sigma= C_n$ being the length $n$ cycle, the induced $C_n$-action is restricted from the $S^1$-action on $\dim(x)$. 
\end{cor}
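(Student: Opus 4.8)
\textbf{Proof plan for Corollary \ref{cor:trpermnat}.}

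The statement upgrades the pointwise computation of Lemma \ref{lm:trperm} (the trace of a permutation acting on a tensor power is the corresponding tensor power of the dimension) to a statement of naturality in the dualizable object $b$ ranging over a symmetric monoidal $(\infty,2)$-category $B$. The plan is to deduce it from the cobordism hypothesis exactly as in the proof of Lemma \ref{lm:trperm}, but now paying attention to the functoriality. First I would recall that, by Harpaz's theorem \cite{harpazcob}, $\Cob_1$ is the universal symmetric monoidal $(\infty,2)$-category on a single dualizable object; hence a dualizable object $b\in B$ is classified by a symmetric monoidal functor $F_b: \Cob_1\to B$, and this assignment is itself functorial, i.e. it assembles into a functor $B^\dbl\to \Fun^\otimes(\Cob_1,B)$ (this is the content of the $(\infty,2)$-categorical cobordism hypothesis: $B^\dbl \simeq \Fun^\otimes(\Cob_1, B)$ after restricting to the appropriate subcategory, or at least there is such a comparison functor which suffices for our naturality claim). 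The trace construction $b\mapsto \tr(b^{\otimes n};\sigma)$ is natural in the symmetric monoidal functor, since traces of dualizable objects are preserved by symmetric monoidal functors, and likewise $b\mapsto \dim(b)^{\otimes n(\sigma)}$; so both sides of the claimed equivalence are natural transformations obtained by whiskering fixed morphisms in $\Cob_1$ (namely the endomorphism $\tr((1^+)^{\otimes n};\sigma)$ of $\one_{\Cob_1}$, respectively $\dim(1^+)^{\otimes n(\sigma)}$) along the universal functor.

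Consequently, it suffices to produce the equivalence once and for all in $\Cob_1$, together with its $C_n$-equivariance in the cyclic case — but that is precisely what Lemma \ref{lm:trperm} already provides, since its proof was carried out entirely inside $\Cob_1$ and identified both the manifold (a single circle) and, via Appendix \ref{app:acttr} (Proposition \ref{prop:actontr}), the $C_n$-action on it. Thus the concrete geometric input is already done; what remains is the formal step of transporting an equivalence of endomorphisms of the unit along a symmetric monoidal functor, and observing that the $C_n$-action, being induced by functoriality in $LB\Sigma_n$ (equivalently, by the $C_n = C(\sigma)$-action on the data defining the trace), is also preserved by symmetric monoidal functors — the action lives on $\tr((1^+)^{\otimes n};\sigma)\in\End(\one_{\Cob_1})$ and is pushed forward to $\End(\one_B)$ by $F_b$, compatibly with the $C_n$-action on $\dim(b)$ coming from the $S^1$-action on $\dim(1^+)\in\End(\one_{\Cob_1})$.

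The one point requiring a little care — and the place I would expect the main (mild) obstacle — is the precise form of the cobordism hypothesis being invoked: Harpaz proves the $1$-dimensional cobordism hypothesis, and what we literally need is that the functor $B^\dbl \to \End(\one_B)$, $b\mapsto \tr(b^{\otimes n};\sigma)$, factors through evaluation at a fixed morphism of $\Cob_1$ under the universal property, \emph{together with all the coherence data} including the $C_n$-action. I would phrase this as: the two functors $B^\dbl\to \End(\one_B)$ in question are both of the form $(-)_* $ applied to specified elements of $\End(\one_{\Cob_1})$, where $(-)_*: \Fun^\otimes(\Cob_1,B)\to \Fun(\ast,\End(\one_B))$ is evaluation, and $B^\dbl\to\Fun^\otimes(\Cob_1,B)$ is the comparison functor from the cobordism hypothesis; since Lemma \ref{lm:trperm} gives the ($C_n$-equivariant) equivalence of these two elements of $\End(\one_{\Cob_1})$, applying the functor $(-)_*$ yields the desired natural ($C_n$-equivariant) equivalence. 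I would keep this argument short, since the geometric heart is in Lemma \ref{lm:trperm} and its appendix, and this corollary is purely the statement that ``everything in sight is natural in the classifying symmetric monoidal functor''.
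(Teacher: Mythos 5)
Your proposal is correct and takes essentially the same approach as the paper: reduce to the universal dualizable object by a universal-property argument, then whisker the $C_n$-equivariant equivalence of \Cref{lm:trperm} along the classifying functor. The paper phrases this slightly more tightly — it works directly with $\Fun^\otimes_\colax(\Frrig(\pt), B)$ (the universal property of the free rigid symmetric monoidal $(\infty,2)$-category, a formal construction, so the cobordism hypothesis is not re-invoked here) and pinpoints the naturality of the $\Omega$-construction in \cite[(2.7)]{HSS} as the precise input making ``evaluation at a fixed element of $\Omega\Frrig(\pt)$'' and its $C_n$-equivariance rigorous — which is exactly the subtlety you correctly flagged as the one point requiring care.
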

\begin{proof}
    Using naturality of the $\Omega$-construction in \cite[(2.7)]{HSS}, we find that this functor is given by $$\Fun^{\otimes}_\colax(\Frrig(\pt),B)\to \Fun^{\otimes}_\colax(\Omega \Frrig(\pt),\Omega B) \xrightarrow{\mathrm{ev}}\Omega B$$
    where $\mathrm{ev}$ is evaluation at $\tr(x^{\otimes n};\sigma)\in \Omega\Frrig(\pt)$, where $x$ is the universal dualizable object. But now this trace has been computed in \Cref{lm:trperm}, and is indeed $\dim(x)^{\otimes n(\sigma)}$. The claim follows. 
\end{proof}
And second, we need a calculation of traces on \categories of local systems in $\PrL$:
\begin{lm}\label{lm:trspace}
    Let $X: A\to\Ss$ be a local system of spaces, and consider the induced local system $\Ss^X: A\to \PrL$. This has dualizable values, and the corresponding $\chi_X$ is given by the local system $LA\to \Ss$ whose unstraightening is $L(\colim_A X)\to LA$, that is, $$\gamma\mapsto  X^\gamma $$ where $X^\gamma$ is the fixed points for the $\Z$-action on $X$ induced by $\gamma$, or equivalently the global sections $\Gamma(S^1, \colim_A X \times_A S^1)$. 
\end{lm}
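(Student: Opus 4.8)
The strategy is to reduce the computation of $\chi_{\Ss^X}$ to the known formula $\THH(\colim_A\zeta)\simeq\colim_{LA}\chi_\zeta$ from \Cref{thm:CCRY} together with the fact that $\THH$ of a space-indexed colimit of the unit is the suspension spectrum of the free loop space. Concretely, recall that for a space $Y$, viewed as the colimit $\colim_Y \one$ in $\PrL$ (equivalently $\Ss^Y = \Fun(Y,\Ss)$ regarded via the constant-diagram/colimit description), we have $\THH(\Ss^Y)\simeq \Sph[LY]$; and more generally, for a local system $\zeta$ of \categories of the form $\Ss^X$, the values are dualizable since each $\Ss^{X(a)}$ is dualizable in $\PrL$ (it is a retract of a presheaf category on a space, or directly: $X(a)$ is a colimit of points and $\Ss^{(-)}$ sends colimits of spaces to limits of \categories, which are dualizable as they are compactly generated by a set of corepresentables). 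So the first step is to record that $\Ss^X$ indeed takes dualizable values, which is immediate.

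The second and main step is to identify the functor $\chi_{\Ss^X}\colon LA\to\Ss$ (or $\to\Sp$ after stabilizing) pointwise. For a loop $\gamma\colon S^1\to A$, by definition $\chi_{\Ss^X}(\gamma) = \THH(\Ss^{X(\gamma(0))}; \Ss^{X}\circ\gamma)$, i.e. the trace of the automorphism of $\Ss^{X(\gamma(0))}$ induced by transporting around $\gamma$. Now $\gamma$ determines a $\Z$-action on the fiber $X(\gamma(0))$ — equivalently, the pullback $\gamma^*X$ is a local system on $S^1$, i.e. a space $E_\gamma$ with a map to $S^1$, which is the same datum as the space $X(\gamma(0))$ with its monodromy $\Z$-action. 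Correspondingly, $\Ss^X\circ\gamma$ is the induced automorphism of $\Ss^{X(\gamma(0))}$ by precomposition with the monodromy. The point is then that $\THH$ of an automorphism of $\Ss^{(-)}$ applied to a space $Z$ with a self-equivalence $\phi$ is computed by the "twisted free loop space" $Z^{h\phi} := Z\times_{Z\times_{B\Z}\,*}\,\ldots$, more precisely the $\Z$-fixed points $Z^{\phi}$, equivalently the space of sections of $E_\gamma \to S^1$, equivalently $\Gamma(S^1, (\gamma^*\colim_A X)) = L(\colim_A X)\times_{LA}\{\gamma\}$. This last identification is exactly the statement that the unstraightening of $\chi_{\Ss^X}$ is $L(\colim_A X)\to LA$. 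The cleanest way to see the pointwise formula $\THH(\Ss^Z;\phi)\simeq Z^\phi$ is to use that $\Ss^Z\simeq \Ss^{B\pi_{\leq 1}(\ldots)}$ — no, better: reduce to $Z$ a point repeatedly via $\Ss^{(-)}$ sending colimits to limits, or simply invoke \Cref{thm:CCRY} itself in the special case $A = S^1$, $\zeta$ the local system on $S^1$ classifying $(Z,\phi)$: then $\colim_{S^1}\zeta$ is $\Ss^{Z_{h\Z}}$... this is getting circular, so instead I would argue directly: $\THH$ of $\Ss^{(-)}$ as a symmetric monoidal functor $\Ss\to\PrL$ followed by $\THH$ agrees with $\Sph[L(-)]$ as a trace theory / cyclotomic spectrum (this is classical, e.g. the Bökstedt–Hsiang–Madsen computation of $\THH(\Sph[\Omega Y]) \simeq \Sph[LY]$, or the free-loop-space formula for $\THH$ of spherical group rings), and this identification is natural enough — it is itself a map of trace theories on the symmetric monoidal subcategory of $\PrL$ spanned by the $\Ss^X$'s — to conclude that $\chi_{\Ss^X}(\gamma)$, the trace of monodromy, is $L$ of the monodromy, i.e. $X^\gamma$.

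The third step is bookkeeping: assemble the pointwise identifications into an equivalence of functors $LA\to\Ss$, and check the "unstraightening" reformulation. By naturality, $\chi_{\Ss^X}$ is the composite $LA \xrightarrow{L(\colim_A X)} \ldots$; concretely, $\chi_{\Ss^X}$ straightens the left fibration $L(\colim_A X)\to LA$ obtained by applying $L = \Map(S^1,-)$ to the left fibration $\colim_A X \to A$ that unstraightens $X$ (here one uses that $L$ preserves left fibrations, or just that $\Map(S^1,-)$ preserves pullbacks and the relevant fibration conditions). Finally, the identification of the fiber over $\gamma$ with $\Gamma(S^1, \colim_A X\times_A S^1)$ is the standard adjunction $\Map_{/A}(S^1, \colim_A X) = $ (sections of the pulled-back fibration over $S^1$), and the $\Z = \pi_1(S^1)$-fixed-point description is the usual unwinding of sections of a fibration over $S^1$ in terms of the monodromy.

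\textbf{Main obstacle.} The only real subtlety is keeping the naturality in $\gamma\in LA$ honest: the pointwise statement "$\THH$ of monodromy $=$ $L$ of monodromy" is classical, but to splice the pointwise equivalences into an equivalence of functors on all of $LA$ I need the identification $\THH\circ\Ss^{(-)}\simeq \Sph[L(-)]$ to be sufficiently structured — ideally an equivalence of trace theories, or at least a natural transformation over the symmetric monoidal category of local systems of spaces — so that \Cref{thm:CCRY} applies verbatim with $\zeta = \Ss^X$. I would handle this by restricting \Cref{thm:CCRY} to the full symmetric monoidal subcategory of $\PrL$ on \categories of the form $\Ss^Y$, where $\THH$ is already known (via \cite{CCRY} applied to $Y = \colim_Y \one$ itself, or classically) to be computed by $\Sph[LY]$ compatibly with the trace structure, and then the desired formula for $\chi_{\Ss^X}$ is just \Cref{thm:CCRY} read off on that subcategory together with the elementary identification of $L$ of a left fibration.
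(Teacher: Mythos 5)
Your approach is genuinely different from the paper's, and you correctly diagnose where your own argument is thin: the assembly of the pointwise identification $\THH(\Ss^{X(\gamma(0))};\text{monodromy})\simeq X^\gamma$ into an equivalence of \emph{functors} $LA\to\Ss$. The paper sidesteps this problem entirely rather than confronting it. It sets $\int X := \colim_A X$ with projection $p:\int X\to A$, observes that $X = p_!\pt$ under un/straightening, and deduces that the functor $\PrL[A]\to\PrL$ classified by $\Ss^X$ is the composite $r_!\circ p^*$ (restriction along $p$ followed by the colimit functor $r_!$ for $r:\int X\to\pt$). Passing to $\PrL$-linear traces and invoking (a mild generalization of) \cite[Proposition 7.14]{CCRY} — which records compatibility of the trace construction with $f_!$ for an arbitrary map of spaces $f$, not just $A\to\pt$ — gives $\chi_X$ directly as the composite $\Ss[LA]\xrightarrow{(Lp)^*}\Ss[L(\int X)]\xrightarrow{Lr_!}\Ss$, which un/straightens to $L(\int X)\to LA$ in one stroke. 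No pointwise computation and no separate naturality step are needed.

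Your sketch reaches the same conclusion by first computing $\chi_{\Ss^X}(\gamma)$ for a fixed loop and then trying to make that natural in $\gamma$. The first half is fine — the identification of the trace of monodromy with homotopy fixed points is indeed classical, and can be reduced to $\THH(\Sph[\Omega Y])\simeq\Sph[LY]$ — but the second half, as you note yourself, is where your argument stalls: you gesture at ``restrict \Cref{thm:CCRY} to the subcategory of $\Ss^Y$'s and use that $\THH\circ\Ss^{(-)}\simeq\Sph[L(-)]$ as trace theories'', and even flag a circularity you encounter on one attempt. That is a genuine gap as written: the compatibility of the equivalence $\THH(\Ss^{(-)})\simeq\Sph[L(-)]$ with the full trace structure is exactly the nontrivial content you would need, and you do not supply it. The paper's reformulation via $r_!\circ p^*$ is the efficient way to get that structure for free: the trace is functorial under $\PrL$-linear left adjoints, so you compute the trace of the \emph{operations} $p^*$ and $r_!$ rather than the trace of each fiber. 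If you want to rescue your route, the fix is essentially to prove your ``sufficiently structured'' claim by showing that the passage $Y\mapsto\Ss^Y$ is a symmetric monoidal functor $\Span(\Ss^\fin)\to\Prdbl$ (or rather a map of the relevant $(\infty,2)$-categorical trace-theoretic gadgets), at which point you would have re-derived, in disguise, the base-change statement the paper quotes.
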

\begin{proof}
Let $\int X := \colim_A X$ for simplicity of notation, with its map $p:\int X\to A$. 

Note that by un/straightening, $X= p_! \pt$, where $\pt:\int X\to \Ss$ is the constant local system. Thus, in terms of functors $\PrL[A]\to \PrL$, letting $p^*$ denote the right adjoint to $p_!$, we find that the functor $\PrL[A]\to \PrL$ induced by $\Ss^X$ is equivalent to the composite $\PrL[A] \xrightarrow{p^*}~\PrL[\int X]\xrightarrow{r_!}~\PrL$,where $r:\int X\to \pt$ is the unique map.  

Thus, passing to $\PrL$-linear traces\footnote{The reader uncomfortable with set theory may pass to $\PrL_\kappa$ for $\kappa$ sufficiently large, this does not change anything to the argument.} and using (an obvious generalization of) \cite[Proposition 7.14]{CCRY}\footnote{The proof of this uses nothing specific to maps $A\to \pt$: it only uses \cite[Theorem 4.34]{CCRY} which is stated in full generality.} we obtain that $\chi_X$ is the composite: $$\Ss[LA]\xrightarrow{(Lp)^*}\Ss[L(\int X)]\xrightarrow{Lr}\Ss$$ 
This implies the desired claim under un/straightening. 
\end{proof}
With these two facts in mind, we can give: 
\begin{proof}[Proof of \Cref{prop:thhcolimweak}, \Cref{prop:thhcolimtext}]
    By \Cref{thm:CCRY}, we have 
    \begin{align*} 
   \THH((O\otimes C^{\otimes n})_{h\Sigma_n}\simeq \colim_{\sigma\in LB\Sigma_n}\THH(O\otimes C^{\otimes n};\sigma)\\
   \simeq \colim_{\sigma \in BC_n}\THH(O\otimes C^{\otimes n}; \sigma)\oplus \colim_{\sigma \in LB\Sigma_n \mid n(\sigma)>1} \THH(O\otimes C^{\otimes n},\sigma) \end{align*}

Now $\sigma$ acts diagonally on $O$ and $C^{\otimes n}$, so each of these terms splits as a colimit of  $\THH(O;\sigma)~\otimes~\THH(C^{\otimes n};\sigma)$. 

    The second summand is therefore a colimit of terms of the form $X\otimes \THH(C)^{\otimes k}$, $1<~k\leq~n$ by \Cref{cor:trpermnat} and is therefore indeed a sum of homogeneous functors of (varying) degrees $1<~k\leq~n$. 

    The first summand is, again by \Cref{cor:trpermnat}\footnote{Note that by \cite[Corollary 4.2.67]{Thesis}, the $S^1$-action on $\Tr(C)$, which is used in \Cref{cor:trpermnat}, agrees with that on $\THH(C)$.} and by \Cref{lm:trspace}, of the form $(O^\sigma~\otimes~\THH(C))_{hC_n}$, as was to be proved. 
\end{proof}
Putting things together, we obtain: 
\begin{cor}\label{cor:calculationTHH}
    Let $\Oo$ be a single-colored operad and $F: \Cat^\perf\rightleftharpoons \Alg_\Oo(\Cat^\perf):U$ the free-forgetful adjunction. There is a natural equivalence: $$\THH\circ UF\simeq \bigoplus_{n\geq 0}\bigoplus_{\sigma\in\Sigma_n/\mathrm{conj}}(\Oo(n)^\sigma \otimes \THH(C)^{\otimes n(\sigma)})_{hC(\sigma)}$$
\end{cor}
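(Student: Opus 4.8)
\textbf{Proof plan for \Cref{cor:calculationTHH}.}

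The plan is to simply assemble the pieces already in hand. First I would recall from \cite[Proposition 3.1.3.13]{HA} that the free $\Oo$-algebra functor satisfies $UF(C)\simeq \bigoplus_{n\geq 0}(\Oo(n)\otimes C^{\otimes n})_{h\Sigma_n}$, naturally in $C\in\Cat^\perf$. Next, I would invoke the fact that $\THH:\Cat^\perf\to\Sp$ preserves direct sums (it is a localizing invariant, hence in particular finitary and additive), so that $$\THH\circ UF(C)\simeq \bigoplus_{n\geq 0}\THH\bigl((\Oo(n)\otimes C^{\otimes n})_{h\Sigma_n}\bigr),$$ again naturally in $C$. This reduces the claim to computing each summand individually.

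For each fixed $n\geq 0$, I would apply \Cref{prop:thhcolimtext} with the space $O=\Oo(n)$ equipped with its $\Sigma_n$-action, which yields precisely $$\THH\bigl((\Oo(n)\otimes C^{\otimes n})_{h\Sigma_n}\bigr)\simeq \bigoplus_{\sigma\in\Sigma_n/\mathrm{conj}}\bigl(\Oo(n)^\sigma\otimes \THH(C)^{\otimes n(\sigma)}\bigr)_{hC(\sigma)},$$ with the various decorations ($n(\sigma)$ the number of cycles, $C(\sigma)$ the centralizer, and the $C(\sigma)$-action as described there). Summing over $n\geq 0$ and using naturality of the equivalence in \Cref{prop:thhcolimtext} gives the stated formula. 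The only bookkeeping point worth stating explicitly is that all the equivalences invoked — the free-forgetful formula, the preservation of direct sums by $\THH$, and the equivalence of \Cref{prop:thhcolimtext} — are natural in $C$, so the composite equivalence is as well.

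There is essentially no obstacle here: the corollary is a formal consequence of \Cref{prop:thhcolimtext} (the genuinely substantive input, proved via the Hochschild homology of groupoid-indexed colimits from \cite{CCRY} and the trace computations in \Cref{lm:trperm}, \Cref{cor:trpermnat}, \Cref{lm:trspace}) together with the standard description of free algebras over an \operad. If anything deserves a remark, it is only that one should be slightly careful about which $S^1$- and $C(\sigma)$-actions are being carried along through the equivalence, but since the statement of the corollary does not commit to more than \Cref{prop:thhcolimtext} already provides, nothing further is needed. I would therefore keep the proof to a couple of lines: expand $UF$, commute $\THH$ past the direct sum, and apply \Cref{prop:thhcolimtext} termwise.
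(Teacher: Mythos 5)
Your proof is correct and matches the paper's approach exactly: the paper in fact gives no explicit proof of \Cref{cor:calculationTHH}, introducing it with ``Putting things together'' immediately after establishing the displayed formula $\THH\circ UF = \bigoplus_n \THH((\Oo(n)\otimes (-)^{\otimes n})_{h\Sigma_n})$ and proving \Cref{prop:thhcolimtext}, which is precisely the two-step argument you lay out.
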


From this we can see how to approach the question of endomorphisms of $\THH\circ U$, and we can also see the relevant difficulties: we have already understood endomorphisms of $\THH$, and we see that we ``simply'' need to understand maps from $\THH$ to (say in the case of $\Oo=\mathrm{Comm}$) $\THH_{hC_n}$, $C_n\subset S^1$ a cyclic group, and maybe variants involving tensor powers. 

These variants are of higher polynomiality degree than $\THH$ itself, and so one might expect there to not be maps from $\THH$ to them, thus leading to a (potentially) simple expression. 

Three problems arise: 
\begin{enumerate}
    \item First, this is not quite right in general: there can a priori be maps from a homogeneous functor of degree $n$ to a homogeneous functor of degree $m>n$ - if the symmetric $m$-linear functor corresponding to the latter has an \emph{induced} symmetric structure, then this in fact does \emph{not} happen, but here the homotopy orbits by $C(\sigma)$ prevent that from being the case, except for very special operads $\Oo$. Another place where this does not happen is in chromatically localized contexts, as was first observed by Kuhn in \cite{kuhn};
    \item The second problem is that we have an infinite direct sum in the target of our mapping spectrum, and this can cause issues a priori both with the homogeneity phenomenon observed above, but also with computing the actual mapping spectrum once we've removed the ``high degree'' terms. This will turn out to be solved also in the chromatic context, because of a ``quick convergence'' phenomenon for Goodwillie derivatives in that context, a phenomenon recorded by Heuts in \cite{heuts} (though he attributes it to Mathew); 
    \item Finally, it turns out to be difficult to compute maps from $\THH$ to $\THH_{hC_n}$. Among other things, the reason for this is that $\THH_{hC_n}$ is simply not a module over $\THH$, and so the tools we have developped so far are not great for this purpose. The same ``quick convergence'' phenomenon mentioned above will turn out to save us here again in the chromatic context, though we end up also being able to compute this integrally. 
\end{enumerate}

These three problems, and their chromatically localized fixes are the reason our results in this section look the way they do, and are only partial. We point out that there \emph{are} in fact subtleties in the integral context, and that the answer we get in the chromatic world does not generalize verbatim to the integral context (thus, it is not only that our proof is not optimal, but that the answer is more complicated): 
\begin{obs}
    Let $p$ be a prime number. There is a nonzero natural transformation $\THH\to \Sigma \THH_{hC_p}$, given by the composite $$\THH\xrightarrow{\varphi_p}\THH^{tC_p}\to \Sigma \THH_{hC_p}$$ where the first map is the cyclotomic Frobenius, and the second is the attaching map for the fiber sequence defining the Tate construction. 

    This is nonzero e.g. because it is also the attaching map for the fiber sequence defining ($p$-typical) $\TR^2$, which in general does not split as $\THH\oplus\THH_{hC_p}$. 
\end{obs}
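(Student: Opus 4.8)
The plan is to produce the map $\THH \to \Sigma\,\THH_{hC_p}$ from standard cyclotomic structure and then verify nonvanishing by a concrete identification with a known non-split fiber sequence. First I would recall that $\THH$, as a functor on $\Cat^\perf$ (via its value on ring spectra, or directly as the trace theory from the earlier sections), is a cyclotomic spectrum, so it comes equipped with the cyclotomic Frobenius $\varphi_p \colon \THH \to \THH^{tC_p}$, a natural transformation of functors $\Cat^\perf \to \Sp$. Composing with the natural connecting map $\THH^{tC_p} \to \Sigma\,\THH_{hC_p}$ coming from the norm cofiber sequence $\THH_{hC_p} \to \THH^{hC_p} \to \THH^{tC_p}$ gives the desired transformation. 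The point to stress is naturality: both $\varphi_p$ and the Tate connecting map are natural in the stable $\infty$-category, so the composite is a genuine morphism in $\Fun(\Cat^\perf,\Sp)$, and its target is (a shift of) the homotopy orbits functor appearing in \Cref{cor:calculationTHH}.

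For nonvanishing, I would argue it suffices to exhibit a single $C \in \Cat^\perf$ on which the transformation is nonzero, and for this take $C = \Perf(\Sph)$ (or $\Perf(\F_p)$), where $\THH(C) = \Sph$ (resp.\ $\THH(\F_p)$). The key observation is that the composite $\THH \xrightarrow{\varphi_p} \THH^{tC_p} \to \Sigma\,\THH_{hC_p}$ is, up to rotation, exactly the attaching map in the fiber sequence defining $p$-typical $\TR^2$, namely $\TR^2 \to \THH \xrightarrow{\partial} \Sigma\,\THH_{hC_p}$ obtained from $\TR^2 = \fib(\varphi_p \colon \THH \to \THH^{tC_p})$ together with the identification $\THH^{hC_p}/\THH_{hC_p} = \THH^{tC_p}$: chasing the octahedral/fracture diagram, $\TR^2$ also sits in $\THH_{hC_p} \to \TR^2 \to \THH$ with connecting map $\THH \to \Sigma\,\THH_{hC_p}$ equal to our composite. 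So our transformation is zero on $C$ iff that extension splits, i.e.\ iff $\TR^2(C) \simeq \THH(C) \oplus \THH(C)_{hC_p}$. For $C = \Perf(\F_p)$ this fails: $\pi_* \TR^2(\F_p)$ is classically known (B\"okstedt–Hesselholt–Madsen) not to be the direct sum $\pi_*\THH(\F_p) \oplus \pi_*\THH(\F_p)_{hC_p}$ — one sees it already from the $\TR$-filtration / Witt-vector calculation, where $\TR^n_*(\F_p)$ is a truncated polynomial-type computation that is visibly not the split extension. Hence $\partial \neq 0$ on $\Perf(\F_p)$, so the natural transformation is nonzero.

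The main obstacle, and the only place where I expect real care is needed, is the bookkeeping identifying the composite $\varphi_p$ followed by the Tate connecting map with the $\TR^2$ attaching map, including getting the suspension/rotation conventions right; this is a diagram chase in the stable $\infty$-category of cyclotomic-type data, but one must be careful that the ``$\THH^{hC_p}\to\THH^{tC_p}$'' used to build $\TR^2$ is the same one whose fiber sequence with $\THH_{hC_p}$ produces the map $\THH^{tC_p}\to\Sigma\,\THH_{hC_p}$. A secondary, more expository point is choosing the cleanest witness for non-splitting: rather than invoking the full $\TR_*(\F_p)$ computation one could alternatively note that if $\partial = 0$ as a natural transformation then it is in particular zero on all $\Perf(R)$, contradicting the known structure of $\TC^-$ / $\TP$ of, say, $\F_p$ where the relevant boundary map is manifestly nonzero (the class $\sigma$ and its Tate counterpart). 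I would present the $\Perf(\F_p)$ argument as the main one and mention this alternative in a footnote. No genuinely hard input is required beyond these classical facts and the naturality of the cyclotomic structure, which is why this is stated as an \emph{observation} rather than a theorem.
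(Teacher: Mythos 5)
Your approach is the same as the paper's: identify the composite $\THH \xrightarrow{\varphi_p} \THH^{tC_p} \to \Sigma\THH_{hC_p}$ with the connecting map of the fiber sequence $\THH_{hC_p} \to \TR^2 \to \THH$, and conclude nonvanishing from the classical failure of that sequence to split for $\Perf(\F_p)$. One small slip worth correcting: $\TR^2$ is \emph{not} $\fib(\varphi_p\colon\THH\to\THH^{tC_p})$ but rather the pullback $\THH\times_{\THH^{tC_p}}\THH^{hC_p}$ (equivalently, the fiber of the full composite $\THH\to\Sigma\THH_{hC_p}$); you implicitly arrive at the right thing when you write down the fiber sequence $\THH_{hC_p}\to\TR^2\to\THH$ with your composite as connecting map, so the argument goes through.
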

In fact, the partial splitting result from \Cref{thm:opns} will show that this nonzero transformation participates nonzero-ly to the endomorphism spectrum $$\eend_{\Fun(\CAlg(\Cat^\perf),\Sp)}(\THH\circ U,\THH\circ U)$$ in the case $\Oo=\mathrm{Comm}$, thus leading to genuinely new operations, not predicted e.g. by the chromatic case, or by the $\Z$-linear case explored in \cite{klamt}. In fact, we can fully compute each mapping spectrum $\map(\THH,\THH_{hC_n})$ and see that the above operations essentially ``generate'' it: 
\begin{thm}\label{thm:extraops}
    Let $n\geq 2$ be a prime number. The mapping spectrum $\map(\THH,\THH_{hC_n})$ is equivalent to $$\Sph[S^1/C_n]\oplus \bigoplus_{p\mid n, \textnormal{ prime }} \Omega\Sph_p[S^1/C_p]$$
    where the first factor is mapped into isomorphically by $\map(\THH,\THH)_{hC_n}$ along the assembly map, and where the degree $-1$ generators correspond to the Frobenius morphisms described above. 
\end{thm}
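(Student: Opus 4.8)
The plan is to compute $\map(\THH, \THH_{hC_n})$ by reducing, via the arithmetic fracture square for the sphere, to three separate computations: a rational one, a $p$-complete one for each $p \mid n$, and a $p$-complete one for $p \nmid n$; and then reassembling. The key structural observation is that $\THH_{hC_n}$, while not a $\THH$-module, \emph{does} fit into a square-zero-type decomposition once we split off the homotopy-orbits spectral sequence by isotropy. More precisely, I would first recall that $S^1/C_n \simeq S^1$ and that $BC_n \to BS^1$ has fiber $S^1/C_n$, so $\THH_{hC_n}$ sits in a fiber sequence with $\THH_{hS^1}$-type terms; but the cleaner route is to use \Cref{thm:endplainpowers}, which already tells us $\map(\THH, X \otimes \THH) \simeq X \otimes \map(\THH,\THH) \simeq X[S^1]$ for any spectrum $X$, together with the (non-equivariant) splitting that is available after localization. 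So the first step is: rationally, $\THH_{hC_n} \simeq (\THH)_{hC_n}$ splits as $\THH^{C_n} \oplus (\text{acyclic})$ since $|C_n|$ is invertible, whence $\map(\THH, \THH_{hC_n})_\Q \simeq \Q[S^1/C_n]$ with no extra summands — the Frobenius terms are $p$-complete and invisible here. This identifies the first summand $\Sph[S^1/C_n]$ as the ``assembly image'' of $\map(\THH,\THH)_{hC_n}$, as claimed.

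Second, I would handle the prime $p$ with $p \nmid n$: here $C_n$ still has order prime to $p$ after $p$-completion, so again $\map(\THH,\THH_{hC_n})_p \simeq \Sph_p[S^1/C_n]$ with no correction term, matching the rational answer. The real content is the case $p \mid n$ (and since $n$ is prime this means $p = n$). Here the homotopy orbits $BC_p$ no longer split, and the Tate construction intervenes. The plan is to use the fiber sequence $\THH_{hC_p} \to \THH^{hC_p} \to \THH^{tC_p}$, or rather its Postnikov/attaching-map avatar, combined with the cyclotomic Frobenius $\varphi_p : \THH \to \THH^{tC_p}$. Concretely: maps $\THH \to \Sigma\THH_{hC_p}$ can be detected by composing with $\THH^{tC_p} \to \Sigma\THH_{hC_p}$ after lifting along $\varphi_p$, and the $\TR^2$ fiber sequence $\THH_{hC_p} \to \TR^2 \to \THH$ shows this composite is nonzero. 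I would then argue that $\map(\THH, \Sigma\THH_{hC_p})$ is, $p$-completely, exactly $\Sph_p[S^1/C_p]$ — i.e. the Frobenius map generates it as a module over $\Sph_p[S^1]$, using the $S^1$-equivariance of $\varphi_p$ and the computation of $\map(\THH,\THH^{tC_p})$ via $\THH^{tC_p} \simeq \prod \Sigma^? \THH$-type behavior governed by the Tate orbit lemma, or more directly by the Segal-conjecture-type identification $\THH \simeq \THH^{tC_p}$ after suitable completion (Nikolaus–Scholze style). Shifting back by $\Omega$ gives the $\Omega\Sph_p[S^1/C_p]$ summand in degree $-1$.

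Third and last, I would assemble: the fracture square $\Sph \to \prod_p \Sph_p \to \mathbb{A}_f$ (exactly as in the proof of \Cref{prop:map}) turns $\map(\THH, \THH_{hC_n})$ into a pullback, in which the rational and $p \nmid n$ corners contribute only the $\Sph[S^1/C_n]$-part, the $p = n$ corner contributes an extra $\Omega\Sph_p[S^1/C_p]$, and these extra classes die rationally; so the pullback is the direct sum $\Sph[S^1/C_n] \oplus \bigoplus_{p \mid n} \Omega\Sph_p[S^1/C_p]$ as claimed (with $n$ prime this is $\Sph[S^1/C_n] \oplus \Omega\Sph_n[S^1/C_n]$, but the statement is phrased to anticipate composite $n$ elsewhere).

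The main obstacle I expect is the $p$-complete computation of $\map(\THH, \THH^{tC_p})$ and the verification that $\varphi_p$ (together with the $S^1$-action) \emph{generates} it rather than merely hitting a nonzero class — this requires controlling the Goodwillie/chromatic convergence of the relevant tower and invoking the Nikolaus–Scholze identification precisely enough to pin down the full homotopy type, not just $\pi_0$. A secondary subtlety is keeping track of the $S^1/C_p$ (as opposed to bare $S^1$) indexing, i.e. that the residual action is through the quotient circle; this is where \Cref{cor:trpermnat} and the fixed-point description $O^\sigma$ from \Cref{lm:trspace} must be used carefully to get the correct group acting.
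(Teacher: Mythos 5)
Your high-level ingredients (arithmetic fracture, the cyclotomic Frobenius producing the extra class, the Tate orbit lemma, the Segal conjecture, and the observation that the prime-to-$n$ part is easy) all genuinely appear in the paper's proof. However, as you yourself flag, the entire mathematical content of the theorem — the $p$-complete computation at $p=n$ — is deferred as ``the main obstacle I expect,'' so what you have written is an outline of ingredients, not a proof.

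More concretely, two things block your proposed route as stated. First, you propose to access $\map(\THH,\THH_{hC_p})$ through the fiber sequence involving $\map(\THH,\THH^{tC_p})$, but $(-)^{tC_p}$ does not commute with filtered colimits, so $\THH^{tC_p}$ is \emph{not} a finitary localizing invariant and the universal-property-of-$K$-theory machinery that makes $\map(\THH,-)$ computable (\Cref{cor:coder}, ultimately \Cref{cor:mapinmot}) does not apply to it. The paper avoids this precisely by working with $\THH_{hC_n}$ directly, which \emph{is} finitary, and deducing the corollary about $\map(\THH,\THH^{tC_p})$ from \Cref{thm:extraops}, not the other way around. Your plan inverts this logical order without supplying an independent computation of $\map(\THH,\THH^{tC_p})$. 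Second, and relatedly, you do not invoke the key technical tool the paper uses to turn the problem into something calculable: the ``coderivative'' formula $\map(\THH,E)\simeq \lim_k \Sigma^k \widetilde E(A_k)$ (\Cref{cor:coder}) together with the explicit description of $\widetilde\THH$ of a free associative algebra as $\bigoplus_{d\geq 1}\Ind_{C_d}^{S^1}(-)^{\otimes d}$ (\Cref{prop:thhsqz}). It is this that reduces the computation to analyzing equivariant Euler maps in an inverse system, at which point the Segal conjecture ($\Sph^{tC_p}\simeq\Sph_p$, not the somewhat different statement ``$\THH\simeq\THH^{tC_p}$'' you write) and the Tate orbit lemma enter in a precise way. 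Without this reduction your fracture-square reassembly has a hole exactly where the new $\Omega\Sph_p[S^1/C_p]$ summand is supposed to come from.

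What your outline buys, if it could be completed, is a somewhat more conceptual packaging (fracture plus the $\THH_{hC_p}\to\THH^{hC_p}\to\THH^{tC_p}$ sequence) in place of the explicit $\lim_k$ analysis; this is aesthetically appealing and is in fact close to how the paper later \emph{reinterprets} the answer (the degree $-1$ generator is the Frobenius). But as a proof strategy it needs the finitarity issue addressed and the Tate-side mapping spectrum computed by some means you have not supplied. I would suggest re-reading \Cref{cor:coder} and \Cref{prop:thhsqz} and redoing the $p=n$ case through the inverse-limit formula; once you have that, your fracture-square assembly of the final answer is fine, and your identification of the $\Sph[S^1/C_n]$ summand via the norm map is exactly right.
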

I wish to thank Achim Krause for an enlightening discussion that allowed me to prove this result. 

Before we get into the proof, we need a computation of $\THH(A_k)$ which is suitably natural. We let $T:\Sp\to \Alg(\Sp)$ denote the left adjoint to the forgetful functor, i.e. the free (associative) algebra functor\footnote{$T$ stands for ``tensor algebra''. }. This is a canonically augmented algebra, since $T(0)=\Sph$, and for a functor $F: \Alg(\Sp)\to \E$ with values in some stable \category, we let $\tilde F(T(M))$ denote the complementary summand to $F(\Sph) = F(T(0))$. 
\begin{prop}\label{prop:thhsqz}
    There is a natural equivalence $$\widetilde \THH(T(M))\simeq \bigoplus_{d\geq 1} \Ind^{S^1}_{C_d} M^{\otimes d}$$
    where: $M^{\otimes d}$ has a $C_d$-action by permutation, and $\Ind_{C_d}^{S^1}$ is left adjoint to the restriction functor $\Sp^{BS^1}\to \Sp^{BC_d}$. 
\end{prop}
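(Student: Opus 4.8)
The plan is to compute $\THH(T(M))$ from its cyclic bar construction and then split the answer by \emph{weight}. On the image of $\Alg(\Sp)$ in $\Cat^\perf$ the functor $\THH$ agrees with the classical one, so $\THH(T(M))$ is the geometric realization of the cyclic spectrum $B^\cyc_\bullet(T(M))$, $[k]\mapsto T(M)^{\otimes(k+1)}$, with $S^1$-action from the Connes cyclic structure. Since $T(M)=\bigoplus_{n\ge0}M^{\otimes n}$ and $\otimes$ distributes over $\oplus$, in simplicial degree $k$ this is $\bigoplus_{(n_0,\dots,n_k)\in\NN^{k+1}}M^{\otimes(n_0+\cdots+n_k)}$, and every structure map preserves the weight $d:=n_0+\cdots+n_k$ (inner faces concatenate adjacent factors, the last face does the cyclic concatenation, degeneracies insert a unit factor $M^{\otimes0}=\Sph$, and the cyclic operator permutes the factors). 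Hence $B^\cyc_\bullet(T(M))\simeq\bigoplus_{d\ge0}B^{(d)}_\bullet(M)$ as cyclic spectra, naturally in $M$, so $\THH(T(M))\simeq\bigoplus_{d\ge0}\lvert B^{(d)}_\bullet(M)\rvert$ $S^1$-equivariantly. The $d=0$ term is the constant cyclic spectrum on $\Sph$ (only the tuple $(0,\dots,0)$ contributes, with all structure maps identities), realizing to $\Sph$ with trivial action; this is exactly the summand $\THH(\Sph)=\THH(T(0))$ cut off when passing to $\widetilde\THH$, so it remains to identify $\lvert B^{(d)}_\bullet(M)\rvert$ for $d\ge1$.

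The key step is recognizing $B^{(d)}_\bullet(M)$ as an induced cyclic object. In degree $k$ its summands are indexed by compositions of $d$ into $k+1$ nonnegative parts, each a copy of $M^{\otimes d}$; a count of $\mathrm{Hom}$-sets in Connes' category $\Lambda$ shows that, as a cyclic spectrum, $B^{(d)}_\bullet(M)\simeq\Lambda[d-1]_+\wedge_{C_d}M^{\otimes d}$, where $\Lambda[d-1]$ is the cyclic set represented by the object $[d-1]$ (whose automorphism group is $C_d$), with $C_d$ acting freely on $\Lambda[d-1]$ and by cyclic permutation of the $d$ tensor factors on $M^{\otimes d}$ — equivalently $B^{(d)}_\bullet(M)$ is the left Kan extension along $BC_d=\langle[d-1]\rangle\hookrightarrow\Lambda$ of the $C_d$-spectrum $M^{\otimes d}$. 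Using the classical fact that $\lvert\Lambda[d-1]\rvert$ is a circle with the residual $S^1$ acting by translation and $C_d$ acting through the free $d$-fold covering action, geometric realization gives $\lvert B^{(d)}_\bullet(M)\rvert\simeq S^1_+\wedge_{C_d}M^{\otimes d}=\Ind^{S^1}_{C_d}(M^{\otimes d})$, the last identity being the standard description of induction from the closed subgroup $C_d\subset S^1$ (on underlying spectra it recovers $\mathrm{cofib}(M^{\otimes d}\xrightarrow{1-c_d}M^{\otimes d})$, since $\Sigma^\infty_+S^1\simeq\mathrm{cofib}(\Sph[C_d]\xrightarrow{1-\sigma}\Sph[C_d])$ as Borel $C_d$-spectra). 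Summing over $d\ge1$ and using naturality of the Kan extension in $M$ then yields the proposition.

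Two alternatives are worth recording. Since $T(M)$ is smooth/quasi-free, the diagonal $T(M)$-bimodule has the length-one resolution $T(M)\otimes M\otimes T(M)\to T(M)\otimes T(M)\to T(M)$, and applying $T(M)\otimes_{T(M)\otimes T(M)^\op}(-)$ identifies $\THH(T(M))$ with $\mathrm{cofib}(T(M)\otimes M\xrightarrow{\,w\otimes m\,\mapsto\,wm-mw\,}T(M))\simeq\bigoplus_{d}\mathrm{cofib}(M^{\otimes d}\xrightarrow{1-c_d}M^{\otimes d})$; this is quick but does not see the $S^1$-action. Alternatively, $\THH(A)\simeq\int_{S^1}A$ for $A\in\Alg(\Sp)$ ($S^1$-equivariantly, for the rotation action), $\int_{S^1}(-)$ commutes with the free $\EE_1$-algebra functor, and factorization homology of a free $\EE_1$-algebra gives $\bigoplus_d\mathrm{Conf}_d(S^1)\otimes_{\Sigma_d}M^{\otimes d}$; since $\Sigma_d$ acts freely on $\mathrm{Conf}_d(S^1)$ with orbits of components that are free $C_d$-circles, this again lands on $\bigoplus_{d\ge1}\Ind^{S^1}_{C_d}(M^{\otimes d})$ (plus the $d=0$ unit). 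I expect the main obstacle, common to all three approaches, to be pinning down the $C_d$- and residual $S^1$-actions on the relevant circle — i.e. verifying that the cyclic combinatorics genuinely produces the \emph{free} $d$-fold-cover $C_d$-action compatibly with the translation $S^1$-action, so that the weight-$d$ piece really is $\Ind^{S^1}_{C_d}(M^{\otimes d})$ rather than merely $\mathrm{cofib}(1-c_d)$ with some a priori unknown $S^1$-structure. This is the $\THH$-analogue of Loday's computation of the Hochschild homology of tensor algebras, so it is classical, but carrying it out $\infty$-categorically and $S^1$-equivariantly is where the actual work lies.
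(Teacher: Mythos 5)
Your proof is correct in outline but takes a genuinely different route from the paper's. The paper's proof is a citation: it combines Raskin's identification of the Hochschild homology of a tensor algebra (\cite[Prop.\ 3.2.2, Prop.\ 4.5.1]{raskin}), which gives the answer in terms of the \emph{right} adjoint to restriction, with the motivic pullback square of Land--Tamme (\cite[Theorem D]{LT2}) to convert the coinduction into an induction (they differ by a suspension, since $S^1/C_d$ is one-dimensional), together with the naturality of $\THH(\Sph,N)\simeq N$. Your argument instead redoes the classical Loday computation directly at the level of the cyclic bar construction: split $B^\cyc_\bullet(T(M))$ by weight, identify the weight-$d$ piece as the left Kan extension of the $C_d$-spectrum $M^{\otimes d}$ along $BC_d=\langle[d-1]\rangle\hookrightarrow\Lambda$, and use $\lvert\Lambda[d-1]\rvert\simeq S^1$ with its free $C_d$-rotation to realize this as $S^1_+\wedge_{C_d}M^{\otimes d}\simeq\Ind^{S^1}_{C_d}M^{\otimes d}$. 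Your route is more self-contained and makes the weight decomposition explicit, which the paper leaves implicit inside the citations; it also makes transparent why the $d=0$ summand is $\THH(\Sph)$. What it costs you, and what you correctly flag as the genuine remaining work, is pinning down the $C_d$- and residual $S^1$-actions at the cyclic-object level $\infty$-categorically — precisely the content the paper offloads to Raskin and Land--Tamme. Your factorization-homology alternative ($\int_{S^1}$ commutes with free $\EE_1$-algebras, then use $\mathrm{Conf}_d(S^1)/\Sigma_d$) is also a legitimate $S^1$-equivariant route; the bimodule-resolution argument, as you note, only recovers the underlying spectrum and so cannot prove the proposition as stated.
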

\begin{proof}
    This follows from combining \cite[Theorem D]{LT2} and \cite[Proposition 3.2.2, Proposition 4.5.1]{raskin}\footnote{Note that Raskin uses $\Ind$ to denote the \emph{right} adjoint to the forgetful functor. However, they differ by a shift $\Sigma$, and this is accounted for by the motivic pullback square from \cite{LT2}} and noting that $\THH(\Sph,N)\simeq N$ naturally in $N$.
\end{proof}

\begin{proof}[Proof of \Cref{thm:extraops}]
    We first note that, as a spectrum with $S^1$-action coming from the target, $\map(\THH,\THH)\simeq\Sph[S^1]$ has an induced $S^1$-action, so the norm map $\map(\THH,\THH)_{hC_n}\to \map(\THH,\THH)^{hC_n}$ is an equivalence. 
    
    Since it factors through (via assembly and coassembly) the norm map $\map(\THH,\THH_{hC_n})\to \map(\THH,\THH^{hC_n})\simeq \map(\THH,\THH)^{hC_n}$, it follows that $\map(\THH,\THH_{hC_n})$ admits $\map(\THH,\THH)_{hC_n}$ as a summand.

    We now move on to the explicit calculation of the extra summand. By \Cref{cor:coder}, there is an equivalence $$\map(\THH,\THH_{hC_n}) \simeq \lim_k\Sigma^k(\THH(\tilde A_k)_{hC_n})$$ and recall from \Cref{prop:thhsqz} that $$\THH(\tilde A_k)\simeq \bigoplus_{d\geq 1} \Ind^{S^1}_{C_d}(\Omega^k \Sph)^{\otimes d})$$

  \begin{rmk}\label{rmk:withoutCn} We make a digression to mention that without $_{hC_n}$, this inverse limit is easy to deal with: the map $\Sigma^{k+1} (\Omega^{k+1} \Sph)^{\otimes d}\to \Sigma^k (\Omega^k \Sph)^{\otimes d}$ is null whenever $d\geq 2$, and thus the whole inverse system is $\Z$-equivariantly nilpotent of order $2$, where $\Z$ acts via picking a generator $\Z\to C_d$. Since the underlying object of $\Ind^{S^1}_{C_d}X$ is $X_{h\Z}$ for $X\in\Sp^{BC_d}$, it follows from \Cref{lm:sumnilp} that this inverse limit is equivalent to the inverse limit of the $d=1$ summand, which is a constant diagram. This is another way of proving that $\map(\THH,\THH)\simeq \Sph[S^1]$, but will also be relevant in a second. 

    The difficulty now is that for $\THH_{hC_n}$, we have to take into account the $C_n$-equivariance, and the map $\Sigma^{k+1} (\Omega^{k+1} \Sph)^{\otimes d}\to \Sigma^k (\Omega^k \Sph)^{\otimes d}$ is not $C_d$-equivariantly null in general. 
\end{rmk}
    Now, using a basechange formula, we obtain that for $d,k\geq 1$, $$(\Ind^{S^1}_{C_d} X)_{hC_n} \simeq X_{h(C_{d\wedge n} \times \Z)}$$ where the $\Z\times C_{d\wedge n}$-action on $X$ is induced by the map $\Z\times C_{d\wedge n}\to C_d$ which is the inclusion on the right factor, and picks out a generator on the left factor - we use $a\wedge b$ to denote the gcd of $a$ and $b$. 

    Let us point out that $\Z$-orbits are given by a colimit over $S^1$, i.e. a finite colimit, and therefore commute with the limit term. 

    So all in all we are considering the limit of $(\bigoplus_{d\geq 1}\Sigma^k (\Sph^{\otimes dk})_{hC_{d\wedge n}} )_{h\Z}$. It is not hard to check that the maps $\Sigma^{k+1}((\Sph^{\otimes d(k+1)})_{hC_{d\wedge n}})\to \Sigma^{k}((\Sph^{\otimes dk})_{hC_{d\wedge n}})$ are $C_{d\wedge n}$-orbits of Euler maps for the standard representation $\mathbb R^d$ of $C_{d\wedge n}$. 

    If $d\wedge n < d$, then $\mathbb R^d$ admits a more than $1$-dimensional fixed point space for $C_{d\wedge n}$, and thus this Euler map is $C_{d\wedge n}$-equivariantly nullhomotopic. It follows that the limit is the same as the limit taken over the $d$'s for which $d\wedge n = d$, i.e. $d\mid n$.

Thus we are dealing with finite sums and can stop worrying about commuting limits and finite sums and focus on each $$\lim_k \Sigma^k ((\Omega^k\Sph)^{\otimes d})_{hC_d}, d\mid n$$ (again, the $\Z$-orbits commute with the inverse limit, so we will deal with them later).

We claim that these are $0$ whenever $d$ is not $1$ or prime, and $\Omega \Sph_p$ whenever $d=p$ is prime. The term corresponding to $d=1$ comes from $\map(\THH,\THH)_{hC_n}$, by \Cref{rmk:withoutCn}, and therefore we can also focus on the case where $d$ is not $1$. Once this is proved, we will be done: the only nonzero terms correspond to the primes dividing $n$, they are given by $(\Omega\Sph_p)_{h\Z} \simeq \Omega\Sph_p[S^1/C_p]$\footnote{The $/C_p$ does not affect the homotopy type, but is here to remember the specific $S^1$-action. The fact that it is given exactly as that one can be recovered by remembering where this $(-)_{h\Z}$ came from. Since we will not specifically need this $S^1$-action, we do not give more details.}, as claimed.

Let us prove the second fact, i.e. the case of $d=p$ being prime: in this case, the map $$\Sigma^k ((\Omega^k\Sph)^{\otimes p})_{hC_p}\to \Sigma^k ((\Omega^k\Sph)^{\otimes p})^{hC_p}$$ has $\Sigma^k((\Omega^k \Sph)^{\otimes p})^{tC_p}$ as a cofiber. Since $X\mapsto (X^{\otimes p})^{tC_p}$ is an exact functor by \cite[Proposition III.1.1]{NS}, we find that the cofiber, as a $\NN\op$-indexed diagram, is equivalent to the constant diagram at $\Sph^{tC_p}$, which is equivalent to $\Sph_p$ by the Segal conjecture, a theorem of Gunawardena \cite{gunawardenasegal} (at odd primes) and Lin \cite{linsegal} (at $p=2$).

Since $\lim_k \Sigma^k ((\Omega^k \Sph)^{\otimes p})^{hC_p} \simeq (\lim_k (\Omega^k \Sph)^{\otimes p})^{hC_p} = 0$, the result follows. The latter equality comes from the fact that the maps in the system are non-equivariantly trivial, and hence the limit is trivial before taking homotopy fixed points, therefore it is also trivial after taking homotopy fixed points. 

Our proof of the first fact, namely the vanishing of the inverse limit for $d$ composite proceeds differently depending on whether $d$ is a prime power or has different prime factors.

The key input will be the following easy observation: for any proper subgroup $C_a \subset C_d$, $\mathbb R^d$, viewed as a $C_a$-representation, has a fixed point space of dimension $>1$ and thus the associated Euler map is $C_a$-equivariantly nullhomotopic. 

\textbf{The case of several prime factors:} We consider more generally the Euler map for a spectrum $M$: $$\Sigma (\Omega M)^{\otimes d}\to M^{\otimes d}$$

So let $a,b$ be coprime integers $>1$ such that $ab=d$. Using the associated fracture square, we may assume that either $a$ or $b$ is invertible. If $b$ is invertible (say, the argument is symmetric), then we use the above fact to obtain that the induced map $$(\Sigma (\Omega M)^{\otimes d})_{hC_a}\to (M^{\otimes d})_{hC_a}$$ is null, and $b$ acts invertibly on both terms. Thus, it is also $C_b$-equivariantly null, and so in total, the map $$(\Sigma (\Omega M)^{\otimes d})_{hC_d}\to (M^{\otimes d})_{hC_d}$$ is null, which implies that the inverse limit is $0$.

\textbf{The case of a prime power:} 
Say $d= p^s, s>1$. In this case, we use the Tate orbit lemma \cite[Lemma I.2.1]{NS}. An immediate corollary of this is that for $X$ bounded below with a $C_{p^s}$-action, we have $(X_{hC_{p^{s-1}}})^{tC_{p^s}/C_{p^{s-1}}}=0$. A way to rephrase this is that the norm map $$X_{hC_{p^s}}=(X_{hC_{p^{s-1}}})_{hC_{p^s}/C_{p^{s-1}}} \to (X_{hC_{p^{s-1}}})^{hC_{p^s}/C_{p^{s-1}}}$$ is an equivalence. 

In particular, we obtain $$(\lim_k \Sigma^k (\Omega^k\Sph)^{\otimes p^s})_{hC_{p^s}} = \lim_k \Sigma^k (((\Omega^k\Sph)^{\otimes p^s})_{hC_{p^{s-1}}})^{hC_{p^s}/C_{p^{s-1}}} = (\lim_k \Sigma^k ((\Omega^k\Sph)^{\otimes p^s})_{hC_{p^{s-1}}})^{hC_{p^s}/C_{p^{s-1}}} $$
and it suffices to prove that the inner term is $0$. But now $C_{p^{s-1}}$ is a proper subgroup of $C_{p^s}$ so the Euler maps are again $C_{p^{s-1}}$-equivariantly null, and thus the inner term is indeed $0$.
\end{proof}
\begin{rmk}
    Let us point out that in the last part of this proof, the composite case and the prime power case have qualitatively different proofs: in the composite case, we essentially obtain that the relevant inverse system is nilpotent, because nilpotent inverse systems are closed under pullback; whereas in the prime power case we only get that the inverse limit is $0$. It seems likely that the inverse system is actually not pro-zero.
\end{rmk}

We can also use this result to study the independently interesting question of the uniqueness of cyclotomic Frobenii on $\THH$. Indeed, maps from $\THH$ to $\THH^{hC_p}$ are easily computable. We obtain: 
\begin{cor}
    The spectrum $\map(\THH,\THH^{tC_p})$ is equivalent to $\Sph_p[S^1/C_p]$, and in particular the spectrum of $S^1$-equivariant maps $\THH\to\THH^{tC_p}$ is equivalent to $\Sph_p$. 
\end{cor}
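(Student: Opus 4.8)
The plan is to play the Tate fiber sequence of functors
$$\THH_{hC_p}\longrightarrow \THH^{hC_p}\longrightarrow \THH^{tC_p}$$
(for the $C_p\subset S^1$-action on $\THH$; this is a fiber/cofiber sequence in $\Fun(\Cat^\perf,\Sp)^{B(S^1/C_p)}$, computed pointwise) against the computation of $\map(\THH,\THH_{hC_p})$ from \Cref{thm:extraops}. Applying the exact functor $\map(\THH,-)$ gives a fiber sequence $\map(\THH,\THH_{hC_p})\to\map(\THH,\THH^{hC_p})\to\map(\THH,\THH^{tC_p})$, so it suffices to understand the first two terms and the map between them. The term $\map(\THH,\THH^{hC_p})$ is the ``easily computable'' one: since $\map(\THH,-)$ preserves limits and $\THH^{hC_p}$ is the pointwise limit $\lim_{BC_p}\THH$, we get $\map(\THH,\THH^{hC_p})\simeq\map(\THH,\THH)^{hC_p}$, where $C_p$ acts through the $S^1$-action on the target. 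As recalled in the proof of \Cref{thm:extraops}, that action on $\map(\THH,\THH)\simeq\Sph[S^1]$ is the induced (free) translation action, so $\map(\THH,\THH^{hC_p})\simeq\Sph[S^1]^{hC_p}\simeq\Sph[S^1/C_p]$ (the Tate term of a free action vanishes).

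By \Cref{thm:extraops} applied with $n=p$, we have $\map(\THH,\THH_{hC_p})\simeq\Sph[S^1/C_p]\oplus\Omega\Sph_p[S^1/C_p]$, with the first summand the image of the assembly map from $\map(\THH,\THH)_{hC_p}$. The key point is to identify the map $\map(\THH,\THH_{hC_p})\to\map(\THH,\THH^{hC_p})$ induced by the norm $\THH_{hC_p}\to\THH^{hC_p}$: by compatibility of the norm map with (co)assembly, the composite of the assembly $\map(\THH,\THH)_{hC_p}\to\map(\THH,\THH_{hC_p})$ with this map is, under the coassembly equivalence $\map(\THH,\THH^{hC_p})\simeq\map(\THH,\THH)^{hC_p}$, the norm map for the free $C_p$-action on $\map(\THH,\THH)\simeq\Sph[S^1]$, which is an equivalence. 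Hence $\map(\THH,\THH_{hC_p})\to\map(\THH,\THH^{hC_p})$ is a split epimorphism whose section lands in the first summand, and its fiber is the complementary summand $\Omega\Sph_p[S^1/C_p]$. Consequently $\map(\THH,\THH^{tC_p})\simeq\mathrm{cofib}(\map(\THH,\THH_{hC_p})\to\map(\THH,\THH^{hC_p}))\simeq\Sigma\,\Omega\Sph_p[S^1/C_p]\simeq\Sph_p[S^1/C_p]$.

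For the $S^1$-equivariant assertion, I would rerun this argument after applying $(-)^{hS^1}$. All objects in the Tate fiber sequence carry the residual $S^1/C_p\cong S^1$-action, namely the one that makes the cyclotomic Frobenius equivariant; under the equivalences above it acts freely (by translation) on each $S^1/C_p$-factor, so $\Sph[S^1/C_p]^{hS^1}\simeq\Sph$ and $\Omega\Sph_p[S^1/C_p]^{hS^1}\simeq\Omega\Sph_p$. The split-epimorphism argument is preserved by $(-)^{hS^1}$, giving $\map^{S^1}(\THH,\THH^{tC_p})\simeq\Sigma\Omega\Sph_p\simeq\Sph_p$; equivalently, this is read off directly from $\map(\THH,\THH^{tC_p})\simeq\Sph_p[S^1/C_p]$ with its free residual $S^1$-action.

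The main obstacle is the identification in the second paragraph: that the natural map $\map(\THH,\THH_{hC_p})\to\map(\THH,\THH^{hC_p})$ restricts to an equivalence on the summand $\map(\THH,\THH)_{hC_p}$ and is therefore split epi with kernel exactly the ``extra'' summand $\Omega\Sph_p[S^1/C_p]$. This hinges on the splitting of $\map(\THH,\THH_{hC_p})$ produced in \Cref{thm:extraops} being the one compatible with the norm map — which it is, since that summand is obtained there precisely as a retract of the (free, hence invertible) norm map $\map(\THH,\THH)_{hC_p}\to\map(\THH,\THH)^{hC_p}$ — together with the bookkeeping of the residual $S^1$-actions in the final step.
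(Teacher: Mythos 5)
Your proof follows the same route as the paper: play the Tate cofiber sequence $\THH_{hC_p}\to\THH^{hC_p}\to\THH^{tC_p}$ against the computation of $\map(\THH,\THH_{hC_p})$ from \Cref{thm:extraops}, using $\map(\THH,\THH^{hC_p})\simeq\map(\THH,\THH)^{hC_p}$ and the split-epimorphism observation to read off the cofiber. The paper's proof is exactly this (stated much more tersely), and your non-equivariant bookkeeping is correct. One thing to double-check in your last paragraph: when taking $S^1$-fixed points you want the conjugation action on the mapping spectrum, and your justification that ``it acts freely (by translation), so $\Sph[S^1/C_p]^{hS^1}\simeq\Sph$'' is not quite compatible with the Greenlees--May/Adams isomorphism, which for a compact Lie group introduces a shift by the adjoint representation (so a genuinely free translation action on $\Sph[S^1]$ would give $\Sph[S^1]^{hS^1}\simeq\Sigma\Sph$, not $\Sph$). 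The paper is also terse here and does not spell out the identification of this residual action, so this is a shared gap rather than a deviation; but as written your justification for the final ``$\simeq\Sph_p$'' doesn't close it.
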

\begin{proof}
    This follows from \Cref{thm:extraops} and the fact that $$\map(\THH,\THH^{hC_p})\simeq \map(\THH,\THH)^{hC_p}$$ 
\end{proof}
Similarly, and using the same methods as for the proof of the multiplicative part of \Cref{thm:endplain}, we obtain: 
\begin{cor}
    The space of lax symmetric monoidal transformations $\Map^\otimes(\THH,\THH^{tC_p})$ is equivalent to $\Map_{\CAlg(\Sp)}(\Sph^{S^1},\Sph_p)$, which is in turn equivalent to $\Map(S^1,S^1_p)$, where $S^1_p$ is the $p$-complete circle. 
\end{cor}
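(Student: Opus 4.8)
The plan is to transport the proof of the multiplicative half of \Cref{thm:endplain} along the cyclotomic Frobenius. As there, lax symmetric monoidal transformations correspond to maps of commutative algebras for Day convolution, so $\Map^\otimes(\THH,\THH^{tC_p})\simeq\Map_{\CAlg}(\THH,\THH^{tC_p})$. The cyclotomic Frobenius $\varphi_p\colon\THH\to\THH^{tC_p}$ is such a map of commutative algebras, hence exhibits $\THH^{tC_p}$ as a commutative $\THH$-algebra, and by the base-change adjunction together with \Cref{cor:thhsquaredmult} (which gives $\THH\otimes^\Day\THH\simeq\THH^{S^1}$ as commutative $\THH$-algebras), this rewrites as $\Map_{\CAlg(\THH)}(\THH^{S^1},\THH^{tC_p})$. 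So far this is pure formal nonsense, using nothing about $\THH^{tC_p}$ beyond it being a lax symmetric monoidal recipient of $\varphi_p$.

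Next I would pass to $\Sp$ by evaluating at $\Sp^\omega$, as in \Cref{cor:modthh}. Here the point is that $\THH^{S^1}$ is, as a $\THH$-module, perfect — concretely $\THH^{S^1}\simeq\Sph^{S^1}\otimes\THH$ with $\Sph^{S^1}\simeq\Sph\oplus\Sigma^{-1}\Sph$ — so that mapping out of it is controlled by the underlying functor $\THH^{tC_p}$ through the mapping spectrum $\map_{\Fun(\Cat^\perf,\Sp)}(\THH,\THH^{tC_p})$, which the preceding corollary identifies with $\Sph_p[S^1/C_p]$, together with its commutative-algebra refinement. Combining this with the equivalence $\CAlg(\Mod_\THH(\Fun^{\loc,\omega}(\Cat^\perf,\Sp)))\simeq\CAlg(\Sp)$ given by evaluation at $\Sp^\omega$ (\Cref{cor:modthh}), under which $\THH^{S^1}\mapsto\Sph^{S^1}$, one obtains $\Map_{\CAlg(\THH)}(\THH^{S^1},\THH^{tC_p})\simeq\Map_{\CAlg(\Sp)}(\Sph^{S^1},\THH^{tC_p}(\Sp^\omega))$. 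Finally $\THH^{tC_p}(\Sp^\omega)=\THH(\Sp^\omega)^{tC_p}\simeq\Sph^{tC_p}$, which is $\Sph_p$ by the Segal conjecture (Lin \cite{linsegal}, Gunawardena \cite{gunawardenasegal}), giving $\Map_{\CAlg(\Sp)}(\Sph^{S^1},\Sph_p)$.

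It then remains to identify $\Map_{\CAlg(\Sp)}(\Sph^{S^1},\Sph_p)$ with $\Map(S^1,S^1_p)$, which is exactly the $p$-adic constituent appearing in the proof of \Cref{prop:map}: base change along $\Sph\to\Sph_p$ turns the source into $\Sph_p^{S^1}$ (using that $S^1$ is a finite complex, so $\Sph^{S^1}\otimes\Sph_p\simeq\Sph_p^{S^1}$), then \cite[Theorem 2.4.9]{DAG13} identifies $\Map_{\CAlg(\Sph_p)}(\Sph_p^{S^1},\Sph_p)$ with $\Map_{\CAlg(\F_p)}(\F_p^{S^1},\F_p)$, and \Cref{cor:Mandell} identifies the latter with $L(S^1_p)=\Map(S^1,S^1_p)$.

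The main obstacle is the middle step: \Cref{cor:modthh} is stated only for localizing invariants, whereas $\THH^{tC_p}$ is not even additive, so the clean ``pass through $\Sp$ via evaluation at $\Sp^\omega$'' move is not literally available for this target and must be re-derived. The way I would handle it is to exploit that $\THH^{S^1}\simeq\Sph^{S^1}\otimes\THH$ is dualizable over $\THH$, so that $\Map_{\CAlg(\THH)}(\THH^{S^1},-)$ depends on its argument only through finitely many evaluations; the computation of $\map(\THH,\THH^{tC_p})$ in the preceding corollary (obtained by the same $\lim_k\Sigma^k\tilde E(A_k)$ method) together with the multiplicative structure then pins down the answer. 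Alternatively one can re-run the whole argument with ordinary rather than exactified Day convolution, using that the comparison map $\THH\otimes^\Day_{\mathrm{ord}}\THH\to\THH^{S^1}$ becomes an equivalence after evaluation at $\Sp^\omega$. Everything else is formal or a verbatim citation of the proof of \Cref{prop:map}.
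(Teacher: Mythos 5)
Your approach is exactly what the paper has in mind, which it compresses to ``using the same methods as for the proof of the multiplicative part of \Cref{thm:endplain}.'' You correctly and usefully flag the one real subtlety that the paper's terse pointer elides: $\THH^{tC_p}$ is not finitary, hence not a localizing invariant, so neither \Cref{cor:modthh} nor the identification $\THH\otimes^\Day\THH\simeq\THH^{S^1}$ (which is ``as localizing invariants'') can be cited literally. Your first proposed fix is nearly right but misplaces the emphasis: perfectness of $\THH^{S^1}$ over $\THH$ is not what is needed, and your second fix (ordinary Day convolution) is the harder route, since there the unit is $\Sph[\Map(\Sp^\omega,-)]$ rather than $K$, so evaluation at $\Sp^\omega$ is no longer the right adjoint to $-\otimes\THH$. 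The cleaner repair is to enlarge $\Fun^{\loc,\omega}$ to $\Fun^{\ext,\mathrm{acc}}$ with the exactified Day convolution of \Cref{cons:exactDay}: $\THH^{tC_p}$ does lie there (it is exact and accessible, being the lax symmetric monoidal $(-)^{tC_p}$ post-composed with the localizing invariant $\THH$); the unit is still $K$, so evaluation at $\Sp^\omega$ is still right adjoint to $-\otimes\THH$, giving $\Map_{\CAlg(\THH)}(\Sph^{S^1}\otimes\THH,B)\simeq\Map_{\CAlg(\Sp)}(\Sph^{S^1},B(\Sp^\omega))$ without needing full faithfulness as in \Cref{cor:modthh}; and the identification of $\THH\otimes^\Day\THH$ with $\THH^{S^1}$ as a commutative $\THH$-algebra persists, since \Cref{cor:presTHHsplit}, \Cref{cor:stableDay} and \Cref{prop:P1THH} are all statements at the level of exact accessible functors. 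With $B(\Sp^\omega)=\Sph^{tC_p}\simeq\Sph_p$ via the Segal conjecture, the rest of your argument, including the final reduction to the $p$-adic constituent of \Cref{prop:map}, is correct and matches the paper.
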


Before moving on to proofs of the general results about $\Oo$-algebras, we point out one final thing: our methods, both for the partial result in the integral case, and the full result in the chromatically localized context, extend to ``computing'' (with the same caveats) mapping spectra between (pointwise) tensor powers of $\THH$. However, the expression of our answer there is more complicated, and still only partial, thus we refrain from stating and proving it, as the methods are exactly the same. 

We now move on to the proofs. We start with the integral version, which we can prove right away with no further preparation:
\begin{proof}[Proof of (i) in \Cref{thm:opns}]
   The map we construct is the following : for each $n$, we pick the inclusion of the length $n$-cycle $\gamma_n$ in $\Sigma_n$ and produce the following composite: \begin{align*}
       \Sph[\Oo(n)^{\gamma_n}\times S^1]_{hC_n} \simeq \map(\THH, \Oo(n)^{\gamma_n}\otimes \THH)_{hC_n}\\
       \to \map(\THH, (\Oo(n)^{\gamma_n}\otimes\THH)_{hC_n}) \to \map(\THH,\THH\circ UF)\\
       \simeq \map(\THH\circ U,\THH\circ U) \end{align*}
       where the map $(\Oo(n)^{\gamma_n}\otimes\THH)_{hC_n}\to \THH\circ UF$ is the summand inclusion from \Cref{prop:thhcolimtext}. 

   To prove that each finite direct sum of these inclusion splits, we give the argument for each summand for simplicity of writing, but it clearly extends to the general case, e.g. by using norm maps for non-connected groupoids. 
   
   The point is that the last maps involved in this composite split because of the computation of $\THH\circ UF$ from \Cref{prop:thhcolimtext}, while the assembly map from the $C_n$-orbits outside to the $C_n$-orbits inside is split by the norm map (it is the same argument as in the beginning of the proof of \Cref{thm:extraops}: if $F$ is a functor that preserves limits, and $F(x)_{hC_n}\to F(x)^{hC_n}$ is an equivalence, then the assembly map $F(x)_{hC_n}\to F(x_{hC_n})$ splits). 
\end{proof}

We now move on to the chromatically localized picture. The key piece of intuition here is the following result, which is a corollary of (a suitably generalized version of) Kuhn's \cite[Theorem 1.1]{kuhn}:
\begin{cor}
    Let $\E$ be a cocomplete $T(n)$-local stable \category, where $n\geq 0$, and $C$ be stable \category. For any $m_0<m_1$, any $m_i$-homogeneous functors $F_i: C\to\E$, $\Map(F_0,F_1)=0$. 
\end{cor}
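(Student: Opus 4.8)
The statement is: in a cocomplete $T(n)$-local stable \category $\E$ (with $n \geq 0$, where $T(0)$-local means rational), for $m_0 < m_1$ and $m_i$-homogeneous functors $F_i\colon C \to \E$, we have $\Map(F_0, F_1) = 0$. The plan is to reduce to Kuhn's theorem on the splitting of the Goodwillie tower in chromatically localized settings. First I would recall the classification of homogeneous functors: an $m_i$-homogeneous functor $F_i$ is determined by a symmetric $m_i$-linear functor $L_i\colon C^{m_i} \to \E$, and $F_i \simeq (L_i \circ \delta_{m_i})_{h\Sigma_{m_i}}$. The key point, due to Kuhn (in \cite{kuhn}, suitably generalized from spectra to a general cocomplete $T(n)$-local stable target — this generalization is routine since the proof only uses that the Tate construction vanishes $T(n)$-locally, i.e. the Tate orbit/Tate vanishing results of \cite{kuhn}), is that in the $T(n)$-local world the norm map $(L_i \circ \delta_{m_i})_{h\Sigma_{m_i}} \to (L_i \circ \delta_{m_i})^{h\Sigma_{m_i}}$ is an equivalence, so that homogeneous functors are also \emph{co}homogeneous: $F_i \simeq (L_i \circ \delta_{m_i})^{h\Sigma_{m_i}}$.

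Granting this, the computation is formal. Since $F_1 \simeq (L_1 \circ \delta_{m_1})^{h\Sigma_{m_1}}$ is a limit (homotopy fixed points) of the $m_1$-excisive-and-reduced functor $L_1 \circ \delta_{m_1}$ built out of an $m_1$-linear functor, I would write
\[
\Map(F_0, F_1) \simeq \Map\bigl(F_0, (L_1 \circ \delta_{m_1})^{h\Sigma_{m_1}}\bigr) \simeq \Map(F_0, L_1 \circ \delta_{m_1})^{h\Sigma_{m_1}},
\]
so it suffices to show $\Map(F_0, L_1 \circ \delta_{m_1}) = 0$. Now $F_0$ is $m_0$-excisive with $m_0 < m_1$, and $L_1 \circ \delta_{m_1}$ is the composite of the diagonal with an $m_1$-linear functor — this is exactly the situation of \Cref{lm:homogeneousinduced} (with $C$ for $C$, $\E$ for $D$, $n = m_1$, $m = m_0$, $G = F_0$, and $F = L_1$): \emph{loc. cit.} gives $\Map(F_0, L_1 \circ \delta_{m_1}) = 0$ whenever $m_0 < m_1$. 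Feeding this back through the homotopy fixed points gives $\Map(F_0, F_1) = 0$, as desired.

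The main obstacle is justifying the ``suitably generalized version'' of Kuhn's theorem: Kuhn's \cite[Theorem 1.1]{kuhn} is stated for functors to $T(n)$-local spectra, whereas here the target is an arbitrary cocomplete $T(n)$-local stable \category. However, this generalization is standard — the content of the cohomogeneity statement is that for a bounded-below (or, in the local setting, any) spectrum-valued input, the relevant Tate constructions $(-)^{t\Sigma_{m}}$ vanish after $T(n)$-localization, and this vanishing passes to any $T(n)$-local stable \category because such a category is tensored over $T(n)$-local spectra and the Tate construction is computed levelwise via a finite-group-indexed (co)limit that commutes with this tensoring. Alternatively, one can phrase the whole argument inside $\Fun^{\ext}(C, \E)$ using that $\E$ being $T(n)$-local forces the symmetric powers / norm cofibers appearing in the analysis of the Goodwillie tower to be annihilated, exactly as in \cite{kuhn}. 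Once the cohomogeneity of $F_1$ is in hand, the rest is the purely formal manipulation above together with the already-proved \Cref{lm:homogeneousinduced}, so there is no further real difficulty.
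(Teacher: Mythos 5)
Your proof is correct, but it takes a genuinely different route from the paper's. The paper's main argument goes through \Cref{prop:heuts} (the Heuts--Mathew nilpotency statement): for $V$ a finite type $n$ spectrum, $V\otimes P^{m_0}F_1 = 0$, so $P^{m_0}F_1 = 0$ in the $T(n)$-local category $\E$, and then $\Map(F_0,F_1)\simeq\Map(F_0,P^{m_0}F_1)=0$ by $m_0$-excisiveness of $F_0$. The paper chooses this route deliberately because it already needs \Cref{prop:heuts} (specifically the \emph{uniform} nilpotency exponent) for the later argument where infinite direct sums have to be commuted past the inverse limits defining co-Goodwillie derivatives; your approach would not supply that quantitative refinement. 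Your route instead invokes the $T(n)$-local Tate vanishing directly to upgrade the $m_1$-homogeneous $F_1\simeq (L_1\circ\delta_{m_1})_{h\Sigma_{m_1}}$ to a cohomogeneous presentation $(L_1\circ\delta_{m_1})^{h\Sigma_{m_1}}$, pulls the fixed points out of the mapping spectrum, and then applies the already-established \Cref{lm:homogeneousinduced} to kill $\Map(F_0,L_1\circ\delta_{m_1})$. This is clean and makes good use of \Cref{lm:homogeneousinduced}; it is also distinct from the alternative sketched in the paper's footnote (taking a map $F_0\to F_1$, forming its cofiber, and reading off a splitting via Kuhn's tower-splitting theorem). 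One small caveat: Kuhn's \cite[Theorem 1.1]{kuhn} is the Goodwillie-tower splitting statement; the norm-equivalence you use is the underlying $T(n)$-local Tate vanishing (Kuhn/Greenlees--Sadofsky/Clausen--Mathew--Naumann--Noel), and you should cite that rather than the splitting theorem itself, though as you correctly note the transfer to a general $\Sp_{T(n)}$-linear $\E$ is routine since the Tate construction in $\E$ is a module over $\Sph_{T(n)}^{t\Sigma_{m_1}}\simeq 0$.
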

\begin{rmk}
    In the other direction, note that the definition of $m_1$-homogeneous includes $(m_1-1)$- and thus $m_0$-reduced, so that $\Map(F_1,F_0) =0$ is clear. We note that this direction is simply not true without $T(n)$-localization, as the existence of $k$-excisive functors that do not split as sums of homogeneous ones shows (e.g. norm functors in equivariant homotopy theory, or more classically derived symmetric/exterior powers functors). 
\end{rmk}
Kuhn's proof is more direct\footnote{The way to deduce this from Kuhn's result is to take a map $F_0\to F_1$ and consider its cofiber. Kuhn's splitting result implies that it splits off $\Sigma F_0$ and thus the map $F_0\to F_1$ must have been $0$.}, but we give a proof in terms of \Cref{prop:heuts} which we will need later anyways. 
\begin{proof}
By \Cref{prop:heuts}, for any finite type $n$ spectrum $V$, $V\otimes P^{m_0}F_1=0$, where $P^{m_0}$ denote the $m_0$th co-Goodwillie derivative. 

Since $\E$ is $T(n)$-local, it follows that $P^{m_0}F_1=0$, and thus $$\Map(F_0,F_1) = \Map(F_0,P^{m_0}F_1)= 0$$
\end{proof}
As we mentioned before, this corollary will in fact not quite cut it because of the presence of infinite direct sums in our considerations. 

The point is that (as ``our'' proof shows) we can reinterpret this statement as a vanishing statement for the $m_0$-th \emph{co}-Goodwillie derivative\footnote{Or Goodwillie coderivative ?} of $F_1$, something which involves infinite inverse limits (as Goodwillie derivatives involve infinite colimits), which thus do not commute \emph{a priori} with the relevant colimits. The idea, and the point where $T(n)$-localization will again be relevant, is that in fact the vanishing of this co-Goodwillie derivative happens \emph{quickly}: the relevant inverse limits, rather than simply being $0$, are \emph{pro}-zero, which allows us to get the corresponding result for infinite colimits. 

Let us briefly explain the idea that will allow us to deal with these infinite colimits, in the (much) simpler special case of $m_0 = 1$, so that $F_0$ is an exact functor, and $n=0$ so that $T(n)$-local means rational.

In that case, the first co-Goodwillie derivative of $F_1$ is given by the inverse limit $\lim_n \Sigma^n F_1(\Omega^n -)$. Now, $F_1$ being $m_1$-homogeneous is of the form $(f_1\circ \delta_n)_{h\Sigma_n}$ for some symmetric $n$-linear functor $f_1$ \cite[Proposition 6.1.4.14]{HA}, and we note that each of the transition maps $\Sigma f_1(\Omega X_1,...,\Omega X_{m_1}) \to f_1(X_1,...,X_{m_1})$ is nullhomotopic, because they are (dual to) the diagonal map $S^1\to (S^1)^{\wedge n} = S^n$. However, they are \emph{not} $\Sigma_n$-equivariantly null in general. Rationally, though, they are because null implies equivariantly null for a finite group over a rational base. 

Thus the relevant inverse system is even more than pro-zero: all of the transition maps are $0$. 

Moving to a higher $m_0$ and possibly a higher height, this is no longer true, but the pro-zero-ness persists, although the argument is more involved. This is explicitly recorded in \cite[Appendix B]{heuts}, where Heuts attributes it to Akhil Mathew. For this statement to make sense, recall from \cite[Construction 6.1.1.27]{HA} that $n$-excisive approximations are given as certain explicit sequential colimits of finite limits - thus, dually, the $n$-excisive coapproximation $P^nF$ of a functor $F$ is given as a sequential limit of a certain inverse system. In the following statement, it is this inverse system we call ``the inverse system computing $P^nF$'': 
\begin{prop}[{\cite[Appendix B]{heuts}}]\label{prop:heuts}
    Let $n\geq 0$, $V$ a finite type $n$ spectrum and let $k\geq 0$ be an integer. There is a constant $C$ such that for any $m>k$ and any $m$-homogeneous functor $F: D\to \E$ from a stable \category $D$ to a cocomplete stable \category $\E$, the inverse system computing $V\otimes P^k F$, the $k$-th co-Goodwillie derivative is nilpotent of exponent $C$.
\end{prop}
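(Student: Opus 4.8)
The statement is essentially a pro-categorical strengthening of the fact that an $m$-homogeneous functor has vanishing $k$-th co-Goodwillie derivative when $m > k$, after $\otimes V$; the key point is to make this vanishing happen ``uniformly'', i.e.\ with a bound $C$ independent of $F$. The plan is to reduce to a statement purely about the sphere (or about finite type $n$ spectra) that does not see $F$ at all, and then invoke the chromatic nilpotence technology of Kuhn/Mathew.

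\textbf{Step 1: Unwind the inverse system.} Recall from \cite[Construction 6.1.1.27]{HA} that $P^k F$ is a sequential limit of a tower whose terms are built from finite limits of $F$ evaluated on finite diagrams of the form $X \mapsto \Omega^{j} F(\Sigma^{j} X)$-type expressions (more precisely, total cofibers of cubes). Dualizing the $n$-excisive \emph{approximation} tower of \cite[Construction 6.1.1.27]{HA}, one gets an explicit inverse system $\{T_j F\}_j$ with $\lim_j T_j F = P^k F$, and each transition map $T_{j+1}F \to T_j F$ is built naturally out of $F$ and the structural diagonal/fold maps of spheres. Since $F$ is $m$-homogeneous with $m > k$, we may write $F \simeq (f \circ \delta_m)_{h\Sigma_m}$ for a symmetric $m$-linear $f$ by \cite[Proposition 6.1.4.14]{HA}. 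The first move is to note that $(-)_{h\Sigma_m}$ is a colimit, hence commutes with the formation of the tower $\{T_j(-)\}$ term-by-term (these $T_j$ are finite limits of evaluations, but the relevant functoriality is in $F$, and $(-)_{h\Sigma_m}$ on functor categories is computed pointwise, so it commutes); thus the tower computing $V \otimes P^k F$ is the $\Sigma_m$-orbits of the tower computing $V \otimes P^k(f \circ \delta_m)$.

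\textbf{Step 2: Reduce to an $F$-independent statement about spheres.} The tower computing $P^k (f\circ \delta_m)$ has transition maps which are, up to naturally splicing in copies of $f$, given by smashing with the transition maps in the tower computing $P^k$ of the identity functor on spectra evaluated on sphere inputs --- concretely, the $m$-fold diagonal $S^{j} \to (S^{j})^{\wedge m} = S^{jm}$ and its iterates. The content we need is therefore: \emph{the inverse system of $\Sigma_m$-spectra $\{V \otimes (\text{the relevant sphere-level tower})\}$ is pro-zero with a uniform exponent, after applying $(-)_{h\Sigma_m}$.} At this point $F$ has disappeared entirely; the bound $C$ will depend only on $n$ (through $V$) and on $k$ (and $m$, but one bounds over all $m > k$). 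This is exactly the kind of statement proved in \cite[Appendix B]{heuts}: the transition maps in the sphere-level tower, being (duals of) diagonals $S^j \to S^{jm}$, are null non-equivariantly, and the telescopic/chromatic input is that after $\otimes V$ the obstruction to $\Sigma_m$-equivariant nilpotence is itself nilpotent with bounded exponent --- this is a form of the nilpotence of the Tate construction / the bounded-ness in Kuhn's blueshift arguments.

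\textbf{Step 3: Assemble and conclude.} Having identified, in Step~1, the tower for $V \otimes P^k F$ as $\Sigma_m$-orbits of the tower in Step~2, and having in Step~2 a uniform nilpotence exponent $C_0 = C_0(n,k)$ for the pre-orbits tower, we conclude that the tower computing $V\otimes P^k F$ is nilpotent of exponent at most $C_0$ (orbits are a colimit, hence preserve nilpotent-of-exponent-$C_0$ inverse systems --- composites of $C_0$ consecutive transition maps are null before orbits, hence null after). Setting $C = C_0(n,k)$, which depends only on $V$ (equivalently $n$) and $k$ as required, finishes the proof.

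\textbf{Expected main obstacle.} The genuinely hard input is Step~2: proving the \emph{uniform} (in $m$) pro-nilpotence with a bound depending only on $n$ and $k$, rather than merely pro-zero-ness for each fixed $m$. This is where one must quote \cite[Appendix B]{heuts} essentially as a black box --- the underlying mechanism is Kuhn's/Mathew's observation that in $T(n)$-local (or telescopically $V$-local) contexts the failure of ``null implies $\Sigma_m$-equivariantly null'' is controlled by a Tate-type obstruction that is itself nilpotent with an exponent one can bound independently of the group $\Sigma_m$ and of the functor. Everything else (Steps~1 and~3) is formal manipulation of Goodwillie towers and commuting colimits past finite limits, and should not present serious difficulty; one only has to be careful that the tower of \cite[Construction 6.1.1.27]{HA} really is natural enough in $F$ for the orbit-commutation in Step~1 to be legitimate.
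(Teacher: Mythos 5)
The paper does not actually prove \Cref{prop:heuts}: it is stated as a citation to \cite[Appendix B]{heuts}, where Heuts attributes the argument to Mathew, and the surrounding text in \Cref{section:opsO} only motivates and explains how the statement is to be read (what ``the inverse system computing $P^k F$'' means). So there is no in-paper proof for your sketch to be compared against.

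Taken on its own terms, your sketch correctly locates where the content lies, but it is not a proof: Step~2 is precisely the statement of \cite[Appendix B]{heuts} (or the chromatic nilpotence/blueshift input underlying it), and you quote it as a black box, which is no more than what the paper itself does by citing it. Steps~1 and~3 are reasonable formal reductions, with two points worth tightening. First, in Step~1, the commutation of $(-)_{h\Sigma_m}$ with the co-excisive tower $\{T_j\}$ needs the observation that in the stable target $\E$ the cubical total fibers appearing in each $T_j$ can be rewritten as finite \emph{colimits} (finite limits and colimits over cube shapes agree up to shift in a stable category), which is what lets orbits pass through; ``computed pointwise'' alone is not enough, since $(-)_{h\Sigma_m}$ does not commute with arbitrary finite limits. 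Second, your Step~2 and Step~3 as written pull in two directions: the first sentence of Step~2 asserts pro-zeroness \emph{after} applying $(-)_{h\Sigma_m}$, which would already be the conclusion, whereas Step~3 assumes a uniform $\Sigma_m$-\emph{equivariant} nilpotence exponent for the \emph{pre}-orbits tower and then pushes it through orbits. The latter is the structure you actually want (equivariant nilpotence with a bound independent of $m$, then apply orbits); you should state it that way, since the ordinary, non-equivariant nullity of the transition maps is not preserved by $(-)_{h\Sigma_m}$ and is in fact exactly what fails integrally. Finally, the independence of the exponent from $m$, which you acknowledge parenthetically, is not something you can obtain by ``bounding over all $m>k$''; it is a genuine output of the chromatic argument (uniform nilpotence of the relevant Tate constructions), and should be flagged as part of what is being black-boxed rather than as a formal afterthought.
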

Here, we used: 
\begin{defn}
    Let $I$ be a cofiltered \category, and $A_\bullet: I\to \E$ be a diagram with values in a pointed \category. $A_\bullet$ is called nilpotent of exponent $N$ if for every sequence of $N$ composable non-identity morphisms $i_0\to ... \to i_N$, $A_{i_0}\to A_{i_N}$ is nullhomotopic. 
\end{defn}
If $I$ has an initial copy of $\mathbb N\op$, then any nilpotent $I$-shaped diagram has a trivial limit (in fact is pro-zero, but being nilpotent is stronger than being pro-zero). 

But nilpotency with a uniform exponent allows for more, as the following easy observation shows:
\begin{lm}\label{lm:sumnilp}
    Let $J$ be any set, $I$ a cofiltered \category and $X_j, j\in J$ a family of $I$-shaped diagrams that are nilpotent of a uniform exponent $C$. The diagram $\bigoplus_J X_j$ is also nilpotent of degree $C$. In particular, if $I$ has an initial copy of $\mathbb N\op$, then $\lim_I \bigoplus_J X_j  = 0$.
    \end{lm}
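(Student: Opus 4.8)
\textbf{Proof plan for \Cref{lm:sumnilp}.}

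The statement is essentially immediate once one unwinds the definition of nilpotency of exponent $C$, so the plan is to verify it directly. First I would recall what must be checked: given a sequence of $C$ composable non-identity morphisms $i_0 \to i_1 \to \cdots \to i_C$ in $I$, I must show that the induced map $\bigl(\bigoplus_J X_j\bigr)_{i_0} \to \bigl(\bigoplus_J X_j\bigr)_{i_C}$ is nullhomotopic. Since direct sums are computed pointwise in a functor \category with stable (hence additive) target, this map is simply $\bigoplus_J \bigl((X_j)_{i_0} \to (X_j)_{i_C}\bigr)$. Each summand map is nullhomotopic because $X_j$ is nilpotent of exponent $C$ and we have fed it a chain of $C$ composable non-identity morphisms; the uniformity of the exponent is exactly what lets us do this for all $j$ simultaneously. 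A direct sum (coproduct) of nullhomotopic maps is nullhomotopic — the zero map out of a coproduct is the coproduct of the zero maps, and nullhomotopies can be assembled over the index set $J$ using the universal property of the coproduct. This gives the first claim.

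For the second claim, I would invoke the remark made just before the lemma: if $I$ admits an initial copy of $\NN\op$, then any $I$-shaped diagram that is nilpotent of some finite exponent $C$ has vanishing limit. Indeed, restricting along the initial inclusion $\NN\op \hookrightarrow I$ does not change the limit, and along $\NN\op$ a nilpotent diagram of exponent $C$ is pro-zero (any map $A_{n+C} \to A_n$ is null), so its limit — computed as a sequential inverse limit — is contractible. Applying this to the diagram $\bigoplus_J X_j$, which we have just shown is nilpotent of exponent $C$, yields $\lim_I \bigoplus_J X_j = 0$.

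There is essentially no obstacle here; the only point requiring the tiniest bit of care is the interchange of the direct sum with the (pointwise) evaluation and with the formation of the transition maps, which is automatic since limits of diagrams of functors, and in particular the values of a coproduct of functors, are computed objectwise, and the target \category is stable so finite (and infinite) coproducts behave additively. The essential content of the lemma — and the reason it is stated separately rather than folded into a proof — is the \emph{uniformity} of the nilpotency exponent across the family $\{X_j\}_{j \in J}$: without it, the pointwise direct sum need only be pro-zero rather than nilpotent, and pro-zero diagrams are not closed under infinite direct sums, which is precisely the failure mode that \Cref{prop:heuts} is designed to circumvent.
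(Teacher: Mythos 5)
Your proof is correct and takes essentially the same route as the paper's, which simply says "direct sums of zero morphisms are $0$"; you have just spelled out that one-line observation in full, together with the easy deduction of the vanishing of the limit via restriction to the initial $\NN\op$.
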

    \begin{proof}
This is clear, as direct sums of zero morphisms are $0$. 
    \end{proof}
We are now ready to prove the second part of \Cref{thm:opns}: 
\begin{proof}[Proof of (ii) in \Cref{thm:opns}]
    For the duration of this proof, everything is implicitly rationalized, or $T(n)$-localized for some implicit prime and some height $n\geq 1$. 

    We recall that $$\map_{\Fun(\Alg_\Oo(\Cat^\perf),\Sp)}(\THH \circ U, \THH \circ U)\simeq \map_{\Fun(\Cat^\perf,\Sp)}(\THH, \THH \circ UF)$$

   We combine \Cref{prop:heuts}, \Cref{lm:sumnilp},\Cref{cor:calculationTHH} and \Cref{thm:motlocisloc} to find that this is equivalent to  $$\map_{\Fun(\Mot_\loc,\Sp)}(\THH,\bigoplus_{n\geq 1}\bigoplus_{\sigma \in\Sigma_n/\mathrm{conj} \mid n(\sigma) = 1} (\Oo(n)^\sigma\otimes \THH^{\otimes n(\sigma)})_{hC(\sigma)})$$

Now, for every $n$, there is only one permutation $C_n \in \Sigma_n$ with $n(\sigma) =1$, up to conjugacy: the length $n$ cycle. Its centralizer is itself the length $n$ cycle, and by \Cref{cor:trpermnat}\footnote{Though the proof really is in \Cref{prop:actontr}.}, it acts on $\THH$ via the canonical map $C_n\to S^1$. Thus, we get an equivalence with $$\map(\THH, \bigoplus_{n\geq 1} (\Oo(n)^{\gamma_n}\otimes\THH)_{hC_n})$$

Now, we use \Cref{cor:coder} to rewrite this as $$\lim_k (\bigoplus_{n\geq 1}\Sigma^k (\Oo(n)^{\gamma_n}\otimes \widetilde\THH(A_k)_{hC_n}) $$

Now we use again \Cref{prop:heuts}, \Cref{lm:sumnilp} and \Cref{prop:thhsqz} to obtain $$\lim_k \bigoplus_{n\geq 1} \Sigma^k (\Oo(n)^{\gamma_n}\otimes \Ind_e^{S^1} \Omega^k\Sph)_{hC_n}\simeq \bigoplus_{n\geq 1} (\Oo(n)^{\gamma_n}\otimes \Sph[S^1])_{hC_n} \simeq \bigoplus_{ n\geq 1}\Sph[\Oo(n)^{\gamma_n}\times S^1]_{hC_n}$$ which was to be proved. 
 \end{proof}
As explained in the beginning of this section, this proof does not work integrally: it should be clear that we used \Cref{prop:heuts} repeatedly, and this is crucially a truly chromatic or rational phenomenon. We have, in any case, indicated in \Cref{thm:extraops} that the result fails in the integral case. I do not have a guess for what the correct answer is: it is not clear to me whether the operations from \Cref{thm:extraops} account for all the difference in the integral case, or if the commutation between inverse limits and infinite direct sums fails badly enough that there are other ``exotic'' operations. 
\appendix

\section{Splitting motives}\label{ch:motsplit}
In this appendix, we gather a few facts about splitting motives\footnote{These are typically called ``additive motives'' in the literature. This, especially the corresponding notion of ``additive invariant'' sounds very confusing and less descriptive than ``splitting'', so I have opted for this change of name.} which will be convenient for \Cref{section:day}. 

First, let us fix some notation and terminology. 
\begin{nota}
    Let $C$ be a stable \category. We let $\Cof(C)\subset C^\square$ denote the full subcategory of cofiber sequences, that is, squares whose bottom left corner is $0$, and that are coCartesian. 
\end{nota}
This is the analogue of what Waldhausen called $S_2(C)$ in the context of his $S_\bullet$-construction. 
\begin{defn}
    A split exact sequence is a sequence $C\xrightarrow{i}D\xrightarrow{p}Q$ of stable \categories where: 
    \begin{enumerate}
        \item $p\circ i= 0$;
        \item $p$ and $i$ both have right adjoints $p^R$ and $i^R$ respectively;
        \item $i$ and $p^R$ are fully faithful\footnote{Equivalently, the unit $\id_C\to i^Ri$ and the counit $pp^R\to\id_Q$ are equivalences.};
        \item the canonical nullsequence\footnote{For any functors $f,g$, $\map(if,p^Rg)= 0$, so this nullsequence is canonical and unique.} $ii^R\to \id_D\to p^Rp$ is a cofiber sequence.
    \end{enumerate} 
\end{defn}
\begin{rmk}
Split exact sequences are essentially equivalent to semi-orthogonal decompositions. 
\end{rmk}
The following is the key example of a split exact sequence, it is essentially universal:
\begin{ex}
    Let $C$ be a stable \category, and let $i: C\to \Cof(C)$ be given by $x\mapsto~(x\to~x\to~0)$, $p: \Cof(C)\to C$ be given by evaluation the the bottom right corner, so $(x\to y\to z)\mapsto z$.

    $i^R$ is given by evaluation at the top left corner, while $p^R: z\mapsto (0\to z\to z)$.

    We let $f, t,c: \Cof(C)\to C$ denote the evaluation functors at the three nonzero corners\footnote{$f$ stands for ``fiber'', $t$ for ``total'' and $c$ for ``cofiber''.}. They fit into a canonical cofiber sequence $f\to t\to c$ of functors $\Cof(C)~\to~C$. 
\end{ex}
We further recall that $\Cat^\ext$ is a semi-additive \category, so that for any \category $\E$ with finite products, every finite-product preserving functor $f~:~\Cat^\ext~\to~\E$ admits a canonical lift to $\CMon(\E)$, so we can add maps between values of $f$. In this context, we have: 
\begin{thm}[Waldhausen]\label{appthm:wald}
Let $E:\Cat^\ext\to \E$ be a finite-product-preserving functor. The following are equivalent: 
\begin{enumerate}
    \item For any cofiber sequence of exact functors $F,G,H : C\to D$, $F\to G\to H$, there exists some homotopy $E(G)\simeq E(F)+E(H)$; 
    \item For any stable $\infty$-category $C$, and for the canonical cofiber sequence of functors $f\to~t\to~c : \Cof(C)\to C$, $E(t)\simeq E(f)+E(c)$; 
    \item For any stable $C$, the functor $E$ applied to $\Cof(C)\xrightarrow{(f,c)} C\times C$ yields an equivalence; 
    \item For any split exact sequence $C\xrightarrow{i}D\xrightarrow{p}Q$, letting $r$ denote the right adjoint to $i$, $E$ applied to $D\xrightarrow{(r,p)}C\times Q$ yields an equivalence. 
\end{enumerate}
    
\end{thm}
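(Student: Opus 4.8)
\textbf{Proof proposal for \Cref{appthm:wald}.}

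The plan is to prove the cycle of implications $(1)\Rightarrow(2)\Rightarrow(3)\Rightarrow(4)\Rightarrow(1)$, with the genuinely substantive work concentrated in $(3)\Rightarrow(4)$ and $(4)\Rightarrow(1)$; the rest is essentially unwinding definitions. For $(1)\Rightarrow(2)$, I would simply apply hypothesis $(1)$ to the tautological cofiber sequence $f\to t\to c$ of functors $\Cof(C)\to C$. For $(2)\Rightarrow(3)$, the point is that the functor $(f,c)\colon\Cof(C)\to C\times C$ always admits a section, namely $x\mapsto(x\to x\oplus y\to y)$ wait — more precisely $C\times C\to\Cof(C)$ sending $(x,y)$ to the split cofiber sequence $x\to x\oplus y\to y$; and $(f,c)$ composed with this section is the identity on $C\times C$. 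So after applying $E$ and using semi-additivity, $E(\Cof(C))$ receives a split injection from $E(C)\times E(C)=E(C\times C)$, and one checks that the composite $E(C\times C)\to E(\Cof(C))\to E(C\times C)$ is the identity while $E(\Cof(C))\to E(C\times C)\to E(\Cof(C))$ is, using $(2)$ (i.e. $E(t)\simeq E(f)+E(c)$) together with the additivity of $E$ on the product, homotopic to the identity of $E(\Cof(C))$; hence $(f,c)$ induces an equivalence. This last verification is the one mildly fiddly bookkeeping step: one has to track that the idempotent on $E(\Cof(C))$ built from ``$f+c$'' agrees with ``$t$'', i.e. with the identity, and this is exactly what $(2)$ gives pointwise-on-objects and then on all of $\Cof(C)$ by naturality.

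For $(3)\Rightarrow(4)$, the strategy is to realize an arbitrary split exact sequence $C\xrightarrow{i}D\xrightarrow{p}Q$ (with $r=i^R$) as a retract — in the arrow category of $\Cat^\ext$, or better, in a suitable functor category — of the universal example $C'\xrightarrow{}\Cof(C')\xrightarrow{}C'$ applied to $C'=C$. Concretely: given the split exact sequence, define a functor $D\to\Cof(C)$ by $d\mapsto(ir(d)\to d\to p^Rp(d))$, using that axiom (4) of a split exact sequence says precisely this is a cofiber sequence with first term in the image of $i\simeq$ first term ``a $C$-object''; here I am using $i$ fully faithful to identify $\mathrm{im}(i)\simeq C$. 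Under the evaluation functors one has $f\circ(\text{this functor})\simeq ir\simeq$ (the $C$-component $r$), $c\circ(\text{this functor})\simeq p^Rp$, hence $(f,c)$ applied after it recovers, up to the equivalences $i$ and $p^R$ (fully faithful!), the functor $(r,p)\colon D\to C\times Q$. There is a back map $\Cof(C)\to D$, namely $(x\to y\to z)\mapsto y$, i.e. the ``total'' functor composed with — no: one needs a functor landing in $D$; the correct one is $i\circ t$ is wrong since $t$ lands in $C$; instead use that $D$ itself has a ``$\Cof$-like'' structure? The cleanest route: observe $D\to\Cof(C)\to D$ where the second map is $(x\xrightarrow{a}y\xrightarrow{b}z)\mapsto \mathrm{cofib}\big(i(x)\to ?\big)$ — rather than belabor this, I would instead argue that the functor $D\to\Cof(C)$ above is itself an equivalence onto its essential image, that this image together with the projections recovers the split exact sequence abstractly, and then invoke $(3)$ for $C$ together with the naturality of all constructions in the split exact sequence to transport the equivalence $E(\Cof(C))\simeq E(C\times C)$ to $E(D)\simeq E(C\times Q)$ via $(r,p)$. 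In short: split exact sequences form a category fibered over stable categories $C$, the assignment $C\mapsto(\text{canonical split exact seq on }\Cof(C))$ is (homotopy) terminal among them in an appropriate sense, every split exact sequence is a base-change/retract of it, and $(3)$ is the statement of the conclusion for this terminal object.

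Finally $(4)\Rightarrow(1)$: given a cofiber sequence of exact functors $F\to G\to H\colon C\to D$, I would build the split exact sequence computing it. Form $\Fun^\ext(\Cof(C),D)$ — or more efficiently, let $\mathcal{E}\subset\Fun^\ext([1],D)^{?}$ — the point being to produce, from $F\xrightarrow{\alpha}G$, a split exact sequence whose ``$r$'' evaluates to something built from $F$ and whose ``$p$'' evaluates to something built from $H=\mathrm{cofib}(\alpha)$. Concretely take the stable category $\mathcal{D}:=\Fun^\ext(\Cof(C),D)$ wait that is not quite it either; the standard trick (due to Waldhausen, reproved by Blumberg--Gepner--Tabuada) is: the three exact functors assemble to an exact functor $\Cof(C)\to D$, equivalently a single object of $\Fun^\ext(\Cof(C),D)$; pre-composition along the split exact sequence $C\to\Cof(C)\to C$ (the universal one) exhibits $\Fun^\ext(\Cof(C),D)\xrightarrow{}\Fun^\ext(C,D)\times\Fun^\ext(C,D)$ — no. Let me state the step I will actually carry out: I will show the functor category $\Fun^\ext(C,D)$, for fixed $C,D$, carries a split exact sequence $\Fun^\ext(C,D)\xrightarrow{i}\mathcal{M}\xrightarrow{p}\Fun^\ext(C,D)$ where $\mathcal{M}$ is the category of exact functors $\Cof(C)\to D$, $i$ is ``extend by $0$'' using $p^R$, $p$ is restriction along the fiber inclusion, $r$ is restriction along the cofiber evaluation, and the cofiber sequence axiom (4) holds because cofiber sequences in a stable category are, well, cofiber sequences. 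Applying $(4)$ to \emph{this} split exact sequence says $E$ of $\mathcal{M}$ splits as a product indexed by ``fiber'' and ``cofiber''; now a cofiber sequence $F\to G\to H$ is exactly an object of $\mathcal{M}$ (via its classifying functor $\Cof(C)\to D$) whose ``fiber'', ``total'', ``cofiber'' evaluations are $F,G,H$, and chasing the splitting gives $E(G)\simeq E(F)+E(H)$ after one identifies that ``$E$ of the total evaluation'' is recovered from the two projections as their sum — which is the $(f,c)$ picture again, now known to be an equivalence by $(4)$ (or equivalently by the already-proven $(3)$).

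The hard part will be $(3)\Rightarrow(4)$, specifically making precise the sense in which an arbitrary split exact sequence is a retract of the universal $\Cof$-example so that the equivalence transports: one must produce the comparison functors $D\leftrightarrows\Cof(C)$ compatibly with all four pieces of structure $(i,p,i^R,p^R)$ and with the defining cofiber sequence, and check the relevant composites are equivalences using only full faithfulness of $i$ and $p^R$. Once that retraction is set up cleanly, both $(3)\Rightarrow(4)$ and the functor-category manipulation in $(4)\Rightarrow(1)$ become formal. I would also remark that this is exactly the ``Additivity Theorem'' packaged categorically, so in the write-up I would likely cite \cite{BGT13} for the equivalence of $(1)$, $(3)$, $(4)$ and only prove the extra equivalence with $(2)$ and whatever is needed to match conventions, rather than redo Waldhausen's argument from scratch.
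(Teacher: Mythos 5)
The paper does not actually prove this theorem; it is stated as a quoted result (Waldhausen, cf.\ also \cite{BGT13}) that the appendix takes for granted, so your write-up is the only proof in play. Your plan $(1)\Rightarrow(2)\Rightarrow(3)\Rightarrow(4)\Rightarrow(1)$ is sound in outline, and $(1)\Rightarrow(2)$ and $(2)\Rightarrow(3)$ are essentially right (for the latter, the precise input is the cofiber sequence of endofunctors $j\circ f\to\id_{\Cof(C)}\to k\circ c$ of $\Cof(C)$, where $j(x)=(x\to x\to 0)$ and $k(z)=(0\to z\to z)$; applying $(2)$ to this --- or rather to its classifying functor $\Cof(C)\to\Cof(\Cof(C))$ --- gives $E(\id_{\Cof(C)})=E(jf)+E(kc)=E(s\circ(f,c))$, so $E(s)$ inverts $E(f,c)$).

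The two remaining implications as you wrote them have genuine gaps. In $(3)\Rightarrow(4)$, the functor you propose, $d\mapsto(ir(d)\to d\to p^Rp(d))$, lands in $\Cof(D)$, not $\Cof(C)$: all three terms live in $D$, and there is no reason for $d$ itself to lie in the image of $i$. Consequently the ``back map $\Cof(C)\to D$'' you struggle to name simply does not exist, and the ``terminal split exact sequence'' heuristic cannot be right as stated either, since the universal example $C\to\Cof(C)\to C$ has quotient forced equal to $C$ while a general split exact sequence has an unrelated $Q$. The correct and much shorter route is: deduce $(2)$ from $(3)$ by post-composing $E(\id_{\Cof(D)})=E(jf)+E(kc)$ with $E(t)$ (noting $tj=tk=\id_D$), hence $(1)$; then apply $(1)$ to the cofiber sequence of endofunctors $ir\to\id_D\to p^Rp$ of $D$ (which is axiom (4) of a split exact sequence) to get $E(\id_D)=E(ir)+E(p^Rp)=E(ir\oplus p^Rp)$, the last step using that $E$ preserves finite products; since $ir\oplus p^Rp$ is exactly the composite $D\xrightarrow{(r,p)}C\times Q\xrightarrow{(c,q)\mapsto i(c)\oplus p^R(q)}D$ and the other composite is $\id_{C\times Q}$ on the nose (using $ri=\id$, $pp^R=\id$, $rp^R=0$, $pi=0$), one concludes $E(r,p)$ is an equivalence. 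For $(4)\Rightarrow(1)$ you overcomplicate: $(4)\Rightarrow(3)$ is immediate, since $C\xrightarrow{j}\Cof(C)\xrightarrow{c}C$ is itself a split exact sequence with $j^R=f$ and $c^R=k$, so $(r,p)=(f,c)$; then run $(3)\Rightarrow(2)\Rightarrow(1)$ as above. Your instinct at the end to simply cite \cite{BGT13} (Theorems 7.4 and 7.13 there, say) and only check the cosmetic equivalence with $(2)$ is, frankly, also what the paper intends.
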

\begin{defn}
    A functor $E:\Cat^\ext\to \E$ is called a splitting invariant if it preserves finite products and satisfies any (and hence all) of the equivalent conditions of the previous theorem. 

    It is called \emph{finitary} if it preserves filtered colimits. 
\end{defn}
\begin{defn}
    Let $\mathcal U_\add: \Cat^\ext\to \Mot_\add$ denote the universal finitary splitting invariant with values in a stable \category\footnote{Strictly speaking, we do not need stability here. It turns out to be more convenient for our use of $\Mot_\add$ in \Cref{section:day} - though not necessary - so we choose this convention, but for other purposes it is maybe more canonical not to require this, though the not-necessarily-stable version automatically embeds in the stable version.}. It exists by \cite{BGT13}, and $\Mot_\add$ admits a unique presentably symmetric monoidal structure for which $\mathcal U_\add$ is symmetric monoidal.
\end{defn}
We take for granted\footnote{Or for definition. } the following main result from \cite{BGT13}, and deduce two keys fact about mapping spaces in $\Mot_\add$ from it which  we will use in \Cref{section:day}.
\begin{thm}[{\cite[Theorem 1.3]{BGT13}}]
    There is an equivalence $$K^\cn \simeq \map(\mathcal U_\add(\Sp^\omega), \mathcal U_\add(-))$$
\end{thm}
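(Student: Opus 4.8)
### Proof proposal for $K^\cn \simeq \map(\mathcal U_\add(\Sp^\omega), \mathcal U_\add(-))$

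The plan is to deduce this from the main universal property of $\Mot_\add$ established in \cite{BGT13}, namely that $\mathcal U_\add : \Cat^\ext \to \Mot_\add$ is the universal finitary splitting invariant. The key observation is that both sides of the claimed equivalence are finitary splitting invariants $\Cat^\ext \to \Sp$, so it suffices to exhibit a natural transformation between them and check it is an equivalence on a generating object, or — more efficiently — to identify each side abstractly via its own universal property.

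First I would recall that $K^\cn$, connective algebraic $K$-theory, is a finitary splitting invariant: this is the content of Waldhausen's additivity theorem (our \Cref{appthm:wald}), together with the fact that $K^\cn$ commutes with filtered colimits. Hence by the universal property of $\mathcal U_\add$, $K^\cn$ factors as $K^\cn \simeq \Phi \circ \mathcal U_\add$ for an essentially unique colimit-preserving, exact functor $\Phi : \Mot_\add \to \Sp$. On the other side, $E := \map(\mathcal U_\add(\Sp^\omega), \mathcal U_\add(-))$ is also a finitary splitting invariant: $\mathcal U_\add$ itself is one by definition, $\map(\mathcal U_\add(\Sp^\omega), -) : \Mot_\add \to \Sp$ is exact and preserves filtered colimits (since $\mathcal U_\add(\Sp^\omega)$ is a compact object of $\Mot_\add$, which again is part of the package in \cite{BGT13}), and composites of splitting invariants with exact filtered-colimit-preserving functors remain splitting invariants. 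So $E \simeq \Psi \circ \mathcal U_\add$ for a colimit-preserving exact $\Psi : \Mot_\add \to \Sp$, and both $\Phi$ and $\Psi$ are then determined by their value on the compact generator $\mathcal U_\add(\Sp^\omega)$.

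Therefore the whole statement reduces to identifying $\Phi(\mathcal U_\add(\Sp^\omega))$ and $\Psi(\mathcal U_\add(\Sp^\omega))$ compatibly. By construction $\Psi(\mathcal U_\add(\Sp^\omega)) = \map(\mathcal U_\add(\Sp^\omega), \mathcal U_\add(\Sp^\omega)) = \eend_{\Mot_\add}(\mathcal U_\add(\Sp^\omega))$, while $\Phi(\mathcal U_\add(\Sp^\omega)) = K^\cn(\Sp^\omega) = \Sph$, the sphere spectrum (connective $K$-theory of finite spectra being the sphere, by the Barratt--Priddy--Quillen / Waldhausen computation). Thus the claim becomes: the unit map $\Sph \to \eend_{\Mot_\add}(\mathcal U_\add(\Sp^\omega))$ is an equivalence, i.e. $\mathcal U_\add(\Sp^\omega)$ is the monoidal unit of $\Mot_\add$ and has endomorphism spectrum exactly $\Sph$. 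This is exactly \cite[Theorem 1.3]{BGT13} in its primary form — indeed one typically states the universal property as: $\Mot_\add$ is generated under colimits by the compact unit $\mathcal U_\add(\Sp^\omega)$, with $K^\cn = \map(\mathcal U_\add(\Sp^\omega), \mathcal U_\add(-))$ — so here I am really just unwinding that theorem and its proof rather than reproving it. The main technical point, and the one that genuinely requires \cite{BGT13} rather than formal manipulation, is the identification of the endomorphism spectrum of $\mathcal U_\add(\Sp^\omega)$: everything else is formal consequence of the universal property plus Waldhausen additivity.

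Concretely, the steps in order: (1) verify $K^\cn$ and $E$ are both finitary splitting invariants, invoking \Cref{appthm:wald} for additivity and compactness of $\mathcal U_\add(\Sp^\omega)$ for finitariness of $E$; (2) use the universal property of $\mathcal U_\add$ to factor both through colimit-preserving exact functors out of $\Mot_\add$; (3) reduce, using that $\mathcal U_\add(\Sp^\omega)$ is a compact generator, to checking the comparison on that single object; (4) identify both values with $\Sph$ — on the $K^\cn$ side via $K^\cn(\Sp^\omega) \simeq \Sph$, on the $E$ side by appealing to \cite[Theorem 1.3]{BGT13} that $\eend_{\Mot_\add}(\mathcal U_\add(\Sp^\omega)) \simeq \Sph$ — and check the resulting map is the identity by tracking the unit. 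The hard part is step (4), the computation $\eend_{\Mot_\add}(\mathcal U_\add(\Sp^\omega)) \simeq \Sph$, which is not formal and is precisely what the cited \cite{BGT13} provides; if one takes that theorem "for granted (or for definition)" as the excerpt suggests, the remaining argument is routine.
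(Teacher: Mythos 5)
The paper does not prove this statement: it is explicitly taken as a black box from \cite{BGT13}, and the surrounding text says one may regard it as the \emph{definition} of $K^\cn$. So there is no ``paper proof'' to compare against; the question is only whether your proposed derivation is internally sound, and it is not.

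The concrete error is in step (3): $\mathcal U_\add(\Sp^\omega)$ is \emph{not} a compact generator of $\Mot_\add$. In \cite{BGT13}, $\Mot_\add$ is compactly generated by the whole family $\{\mathcal U_\add(A) : A \in (\Cat^\ext)^\omega\}$, and $\mathcal U_\add(\Sp^\omega)$ is merely the (compact) monoidal unit. A single compact object of a presentable stable $\infty$-category need not generate it, and indeed this one does not: if it did, every finitary splitting invariant would be a (de)suspension of connective $K$-theory, which is false. Consequently, two colimit-preserving exact functors $\Phi, \Psi : \Mot_\add \to \Sp$ are \emph{not} determined by their values on $\mathcal U_\add(\Sp^\omega)$, and your proposed reduction collapses: you would need to compare $\Phi(\mathcal U_\add(A))$ with $\Psi(\mathcal U_\add(A))$ for all compact $A$, which amounts to comparing $K^\cn(A)$ with $\map(\mathcal U_\add(\Sp^\omega), \mathcal U_\add(A))$ for all $A$ --- that is, it amounts to the full statement you are trying to prove, so nothing has been reduced. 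The honest account is that this corepresentability theorem is the hard content of \cite{BGT13} (its proof runs through Waldhausen's $S_\bullet$-construction, additivity, and a careful analysis of the localization defining $\Mot_\add$), and it cannot be extracted formally from the universal property of $\mathcal U_\add$ plus the computation $\eend(\mathcal U_\add(\Sp^\omega)) \simeq \Sph$; that endomorphism computation is a \emph{consequence} of the theorem (take $A = \Sp^\omega$ and use $K^\cn(\Sp^\omega)\simeq\Sph$), not an independent input from which the theorem can be recovered.
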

Here, $K^\cn$ denotes connective $K$-theory. 
\begin{rmk}
    We depart slightly from \cite{BGT13} by not requiring splitting invariants to be invariant under idempotent-completion. This is a mild difference, and only affects $K_0$.
\end{rmk}
We can essentially take this theorem as our definition of $K^\cn$. Thus, the following is only really a corollary if one has an \emph{a priori} definition of $K$-theory.

Before we state it, note that we have a canonical internal hom comparison map of the form: 
$$\mathcal U_\add(\Fun^\ext(A,B))\to \hom(\mathcal U_\add(A),\mathcal U_\add(B))$$
whenever $A,B\in\Cat^\ext$. By taking maps from the unit, this induces an map $$K^\cn(\Fun^\ext(A,B))\to \map_{\Mot_\add}(\mathcal U_\add(A),\mathcal U_\add(B))$$

\begin{cor}\label{cor:mapinmot}
    Let $A,B\in\Cat^\ext$, with $A$ being a compact object therein. The canonical map
    $$K^\cn(\Fun^\ext(A,B))\to \map_{\Mot_\add}(\mathcal U_\add(A),\mathcal U_\add(B))$$ 
    described above is an equivalence. 
\end{cor}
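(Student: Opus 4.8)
The plan is to reduce the statement to the known universal property of connective $K$-theory, $K^\cn \simeq \map(\mathcal U_\add(\Sp^\omega), \mathcal U_\add(-))$, by exhibiting the mapping spectrum $\map_{\Mot_\add}(\mathcal U_\add(A), \mathcal U_\add(B))$ as $K^\cn$ of a suitable \category, at least when $A$ is compact. The point is that $\mathcal U_\add$ is symmetric monoidal for the presentably symmetric monoidal structure on $\Mot_\add$, and one expects an internal-hom comparison $\mathcal U_\add(\Fun^\ext(A,B)) \to \hom(\mathcal U_\add(A), \mathcal U_\add(B))$ to be an equivalence when $A$ is compact and dualizable-like; taking maps out of the unit then gives the claim.

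First I would recall that $\Mot_\add$ is generated under colimits by the objects $\mathcal U_\add(A)$ for $A \in \Cat^\ext$, and that the compact objects of $\Mot_\add$ are (retracts of) the $\mathcal U_\add(A)$ with $A$ compact in $\Cat^\ext$ — this is part of the construction in \cite{BGT13}. For $A$ compact, $\mathcal U_\add(A)$ is a compact object, and I would argue it is moreover dualizable in $\Mot_\add$ with dual $\mathcal U_\add(A\op)$ (or the appropriate opposite), using that $\Fun^\ext(A,-)$ is corepresented nicely and that $\mathcal U_\add$ carries the relevant ``finiteness'' of $A$ into dualizability — this is the step that makes the internal hom computable. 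Then $\hom(\mathcal U_\add(A), \mathcal U_\add(B)) \simeq \mathcal U_\add(A)^\vee \otimes \mathcal U_\add(B)$, and I would identify this with $\mathcal U_\add(\Fun^\ext(A,B))$ by checking that $\Fun^\ext(A,B)$ represents the correct functor: for compact $A$, $\Fun^\ext(A, -)$ commutes with the relevant colimits and split exact sequences, so $\mathcal U_\add \circ \Fun^\ext(A,-)$ is a finitary splitting invariant, hence factors through $\mathcal U_\add$, and one checks the resulting endofunctor of $\Mot_\add$ agrees with $\mathcal U_\add(A)^\vee \otimes -$ by evaluating on generators and using that both are colimit-preserving.

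Having established $\mathcal U_\add(\Fun^\ext(A,B)) \simeq \hom(\mathcal U_\add(A), \mathcal U_\add(B))$, I would take $\map_{\Mot_\add}(\one, -)$ of both sides: the left side becomes $\map(\mathcal U_\add(\Sp^\omega), \mathcal U_\add(\Fun^\ext(A,B))) \simeq K^\cn(\Fun^\ext(A,B))$ by the $K$-theory universal property, since $\one = \mathcal U_\add(\Sp^\omega)$; the right side becomes $\map_{\Mot_\add}(\mathcal U_\add(A), \mathcal U_\add(B))$ by the adjunction between $\otimes$ and $\hom$. Finally I would check that this identification is compatible with the canonical comparison map described just before the statement — this is a naturality/unwinding check, tracing through the unit of the $\hom$-adjunction — so that the canonical map is the equivalence, not just some equivalence.

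The main obstacle I expect is the dualizability/internal-hom step: showing that for compact $A$, the functor $\mathcal U_\add(\Fun^\ext(A,-))$ is genuinely computed by tensoring with a dual of $\mathcal U_\add(A)$, rather than merely receiving a comparison map. One has to be careful that $\Fun^\ext(A,B)$ need not have good properties unless $A$ is compact (hence the hypothesis), and that the symmetric monoidal structure on $\Mot_\add$ interacts correctly with $\Fun^\ext$ — this presumably uses that the tensor product on $\Cat^\ext$ represents bilinear (bi-exact) functors and that $\mathcal U_\add$ is symmetric monoidal, so that $\mathcal U_\add(A) \otimes \mathcal U_\add(B) \simeq \mathcal U_\add(A \otimes B)$, combined with a duality statement for compact stable \categories in $\Cat^\ext$ itself (e.g. $A$ compact implies $\Fun^\ext(A, B) \simeq A\op \otimes B$ at the level of $\Cat^\ext$, or at least after applying $\mathcal U_\add$). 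Once that algebra is in place, the rest is formal manipulation with adjunctions and the defining property of $K^\cn$.
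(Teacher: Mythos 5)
There is a genuine gap, concentrated in the step where you claim $\mathcal U_\add(A)$ is dualizable with dual $\mathcal U_\add(A^\op)$ and then identify $\hom(\mathcal U_\add(A),\mathcal U_\add(B))$ with $\mathcal U_\add(A^\op)\otimes\mathcal U_\add(B)\simeq\mathcal U_\add(A^\op\otimes B)$. For a compact but not smooth-and-proper $A$, the object $\mathcal U_\add(A)$ is \emph{not} dualizable (dualizability in $\Mot_\add$/$\Mot_\loc$ corresponds to smooth and proper, not to compactness). Concretely, for $A=\Perf(\Sph[x])$ (compact, not proper), $\Fun^\ext(A,B)$ is ``objects of $B$ with an endomorphism'', which is genuinely different from $A^\op\otimes B$ already at the level of $K$-theory (the former is related to endomorphism $K$-theory à la Lindenstrauss--McCarthy, the latter to the Bass--Heller--Swan picture for $K(S[x])$), so the identification you are after is false. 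Similarly, $\Fun^\ext(A,\Sp^\omega)$ is the category of pseudo-perfect (proper) modules, not $A^\op$, so even the value of your candidate endofunctor on the unit is not $\mathcal U_\add(A^\op)$. Finally, your step of identifying the colimit-preserving endofunctor $G$ induced by $\mathcal U_\add\circ\Fun^\ext(A,-)$ with $\mathcal U_\add(A)^\vee\otimes-$ ``by evaluating on generators'' is circular: the generators are exactly the $\mathcal U_\add(B)$, and agreement there is what the statement asserts. One cannot instead appeal to $\Mot_\add$-linearity of $G$ to reduce to its value on the unit, since $\Fun^\ext(A,-)$ does not commute with tensoring by a fixed stable category.

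The paper goes the other way around and avoids all of this. It observes that \emph{both} sides of the comparison map, viewed as functors of $B$, are finitary splitting invariants ($K^\cn(\Fun^\ext(A,-))$ because $A$ is compact and $\Fun^\ext(A,-)$ preserves split exact sequences; $\map_{\Mot_\add}(\mathcal U_\add(A),\mathcal U_\add(-))$ because $\mathcal U_\add(A)$ is compact in $\Mot_\add$), and then shows by the Yoneda lemma that for any finitary splitting invariant $F$ one has $\map(K^\cn(\Fun^\ext(A,-)),F)\simeq F(A)$ and $\map(\map_{\Mot_\add}(\mathcal U_\add(A),\mathcal U_\add(-)),F)\simeq F(A)$, using only the universal property of $K^\cn$. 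The two sides therefore corepresent the same functor and agree. The internal-hom statement (\Cref{cor:internalhominmot}) is then a \emph{corollary} of this, not an input; it never gets rewritten as tensoring with a dual. If you want to salvage your route, you would need a direct argument identifying $\hom(\mathcal U_\add(A),\mathcal U_\add(-))$ with $\mathcal U_\add(\Fun^\ext(A,-))$ that does not pass through dualizability — but that identification is precisely what the Yoneda argument delivers, so it is more efficient to run the paper's argument as stated.
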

\begin{proof}
    Fix $A$, and consider $B$ a variable. 

    Since $A$ is compact, $K^\cn(\Fun^\ext(A,-))$ is finitary, and since $\Fun^\ext(A,-)$ preserves split exact sequences\footnote{This is the crucial difference with the story in the localizing case.}, $K^\cn(\Fun^\ext(A,-))$ is a splitting invariant, hence in total a finitary splitting invariant. 

    One also proves ahead of time (essentially using item (iii) in \Cref{appthm:wald}) that $\mathcal U_\add(A)$ is compact in $\Mot_\add$, so that $\map_{\Mot_\add}(\mathcal U_\add(A),\mathcal U_\add(-))$ is also a finitary splitting invariant. 
    
    We now use the Yoneda lemma: let $F: \Cat^\ext\to \Sp$ be a finitary splitting invariant. We then have equivalences: $$\map(K^\cn(\Fun^\ext(A,-)),F)\simeq \map(K^\cn, F(A\otimes -)) \simeq F(A\otimes\Sp^\omega)\simeq F(A)$$

    where the first equivalence is by adjunction, the second uses the Yoneda lemma with $K^\cn \simeq \map_{\Mot_\add}(\mathcal U_\add(\Sp^\omega),\mathcal U_\add(-))$ and the fact that $F(A\otimes -)$ is still a finitary splitting invariant. 

    By the Yoneda lemma, we also have $$\map(\map_{\Mot_\add}(\mathcal U_\add(A),\mathcal U_\add(-)),F)\simeq F(A)$$

    It follows that there is an equivalence as desired. It is not difficult to verify that the equivalence in question is indeed given by the internal hom-comparison map, for example by rewriting this second equivalence as the string: 
    $$\map(\map_{\Mot_\add}(\mathcal U_\add(A),\mathcal U_\add(-)),F)\simeq \map(\map(\mathcal U_\add(\Sp^\omega), \hom(\mathcal U_\add(A),\mathcal U_\add(-))), F)$$ 
    $$\simeq \map(K^\cn, F(\mathcal U_\add(A)\otimes -))\simeq F(A)$$
\end{proof}

\begin{cor}\label{cor:internalhominmot}
    Let $A,B\in \Cat^\ext$, with $A$ being a compact object therein. The internal hom-comparison map $$\mathcal U_\add(\Fun^\ext(A,B))\to \hom(\mathcal U_\add(A),\mathcal U_\add(B))$$ 
    is an equivalence.
\end{cor}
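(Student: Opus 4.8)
The plan is to bootstrap from \Cref{cor:mapinmot}, which identifies mapping spectra out of $\mathcal U_\add(A)$ in terms of $K^\cn$ of functor categories, together with the fact that $\mathcal U_\add(A)$ is compact (hence dualizable-like in the relevant sense) and that $\Mot_\add$ is closed symmetric monoidal. First I would recall that, since $\mathcal U_\add$ is symmetric monoidal and $A$ is compact in $\Cat^\ext$, the object $\mathcal U_\add(A)$ is compact in $\Mot_\add$; in a presentably symmetric monoidal stable \category, compact objects generated under colimits by the unit are in fact dualizable, but even without full dualizability the key point is that $\hom(\mathcal U_\add(A),-)$ commutes with colimits in the argument we care about because $\mathcal U_\add(A)$ is compact and $\Mot_\add$ is generated by the $\mathcal U_\add(C)$'s. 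The strategy is then to check that the internal-hom comparison map is an equivalence after applying $\map(\mathcal U_\add(C),-)$ for every $C\in\Cat^\ext$, since such objects generate $\Mot_\add$ under colimits and both sides send colimits in $B$ to the appropriate (co)limits.

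Concretely, I would compute $\map_{\Mot_\add}(\mathcal U_\add(C),\hom(\mathcal U_\add(A),\mathcal U_\add(B)))$ using the tensor-hom adjunction: it is $\map_{\Mot_\add}(\mathcal U_\add(C)\otimes\mathcal U_\add(A),\mathcal U_\add(B))$, which since $\mathcal U_\add$ is symmetric monoidal equals $\map_{\Mot_\add}(\mathcal U_\add(C\otimes A),\mathcal U_\add(B))$. Now apply \Cref{cor:mapinmot} (which applies because $C\otimes A$ is compact when $C$ and $A$ are): this is $K^\cn(\Fun^\ext(C\otimes A,B))\simeq K^\cn(\Fun^\ext(C,\Fun^\ext(A,B)))$. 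On the other side, $\map_{\Mot_\add}(\mathcal U_\add(C),\mathcal U_\add(\Fun^\ext(A,B)))$ is, again by \Cref{cor:mapinmot}, equal to $K^\cn(\Fun^\ext(C,\Fun^\ext(A,B)))$. So both sides agree, and one checks the comparison map induces this identification — this is the same bookkeeping as at the end of the proof of \Cref{cor:mapinmot}, writing the internal-hom comparison map as the composite of the universal map $\mathcal U_\add(\Fun^\ext(A,B))\to\hom(\mathcal U_\add(A),\mathcal U_\add(B))$ and tracking it through the adjunction isomorphisms.

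The main obstacle is making the generation argument rigorous: one needs that a map $f$ in $\Mot_\add$ is an equivalence provided $\map(\mathcal U_\add(C),f)$ is an equivalence for all $C\in\Cat^\ext$ (i.e. that the $\mathcal U_\add(C)$ form a set of generators), and one needs both functors of $B$ — namely $B\mapsto\mathcal U_\add(\Fun^\ext(A,B))$ and $B\mapsto\hom(\mathcal U_\add(A),\mathcal U_\add(B))$ — to be suitably continuous so that checking on generators suffices. For the first functor this is the statement that $\Fun^\ext(A,-)$ preserves split exact sequences and filtered colimits (true since $A$ is compact), so it is a finitary splitting invariant and descends to a colimit-preserving functor $\Mot_\add\to\Mot_\add$; for the second, it is the compactness of $\mathcal U_\add(A)$ together with the fact that $\otimes$ preserves colimits in each variable. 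Once these continuity properties are in hand, the computation above applied objectwise to the generators finishes the proof. A secondary, more bureaucratic point is ensuring the two identifications of $\map(\mathcal U_\add(C),-)$ applied to each side are induced by \emph{the same} comparison map rather than merely being abstractly isomorphic; this is handled exactly as in \Cref{cor:mapinmot} by expressing the internal-hom comparison map via the counit of the $\otimes$-$\hom$ adjunction and the universal property of $K^\cn$.
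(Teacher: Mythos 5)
Your proposal is correct and follows essentially the same route as the paper: reduce to checking after $\map(\mathcal U_\add(C),-)$ for $C$ ranging over generators, then identify both sides with $K^\cn(\Fun^\ext(C,\Fun^\ext(A,B)))$ using \Cref{cor:mapinmot}, the tensor-hom adjunction, and strong monoidality of $\mathcal U_\add$ together with $\Fun^\ext(C\otimes A,B)\simeq\Fun^\ext(C,\Fun^\ext(A,B))$. One small precision: in your first paragraph you let $C$ range over all of $\Cat^\ext$, but to invoke \Cref{cor:mapinmot} you need $C$ (and $C\otimes A$) compact, which is what you implicitly use in the second paragraph and what the paper states explicitly; also, the discussion of continuity in $B$ is unnecessary for the statement as posed, since $B$ is fixed and the claim is only that a single map in $\Mot_\add$ is an equivalence, which is detected by mapping spectra out of a set of compact generators.
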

\begin{proof}
    It suffices to check that mapping in from $\mathcal U_\add(C), C\in(\Cat^\ext)^\omega$ produces an equivalence. 

    Since $ C$ is compact, \Cref{cor:mapinmot} shows that mapping in from $C$ gives the following map: 
    $$K^\cn(\Fun^\ext(C,\Fun^\ext(A,B)))\to \map_{\Mot_\add}(\mathcal U_\add(C), \hom(\mathcal U_\add(A),\mathcal U_\add(B)))$$

    which is, in turn: 
    $$K^\cn(\Fun^\ext(C\otimes A,B))\to \map_{\Mot_\add}(\mathcal U_\add(C)\otimes\mathcal U_\add(A),\mathcal U_\add(B))$$
    Since $\mathcal U_\add$ is strong monoidal, using that $A\otimes C$ is compact and using again \Cref{cor:mapinmot}, we conclude that this map is an equivalence. 
\end{proof}

Beyond splitting invariants/motives, there is the crucial notion of a \emph{localizing} invariant/motive\footnote{Our localizing invariants will be Morita invariant, also known as Karoubi localizing invariants. In this setting, this is the standard convention, though we note that $K^\cn$ is an interesting example of a ``Verdier localizing invariant'' which is not Morita invariant.}. 
\begin{defn}
    A null-sequence $C\to D\to Q$ of stable \categories is called a Karoubi sequence, or localization sequence, if $\Ind(C)\to \Ind(D)\to \Ind(Q)$ is a split exact sequence. 

    One can show that this is equivalent to: 
    \begin{enumerate}
        \item $C\to D$ is fully faithful; 
        \item the sequence is a cofiber sequence in $\Cat^\perf$, the \category of idempotent-complete stable \categories. 
    \end{enumerate}
\end{defn}
With this in hand, we can define: 
\begin{defn}
    A localizing invariant with values in a stable \category $\E$ is a functor $E:\Cat^\perf\to \E$ sending $0$ to $0$ and Karoubi sequences to co/fiber sequences. 
    
It is called finitary if it preserves filtered colimits. 
\end{defn}
And finally, we have: 
\begin{defn}
    We let $\mathcal U_\loc : \Cat^\perf\to \Mot_\loc$ denote the universal finitary localizing invariant. It exists by \cite{BGT13}, and $\Mot_\loc$ is presentable and stable. 
\end{defn}

A result which we do not prove here, but will be used once in the body of the paper is the following result, obtained in joint work with Sosnilo and Winges \cite{RSW}: 
\begin{thm}\label{thm:motlocisloc}
    The functor $\mathcal U_\loc: \Cat^\perf\to \Mot_\loc$ is a Dwyer-Kan localization (at the class of morphisms it inverts, called motivic equivalences). 
\end{thm}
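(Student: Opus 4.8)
The plan is the standard strategy for upgrading a universal construction to a genuine Dwyer--Kan localization, and it is the argument of \cite{RSW}: first pin down the class $W$ of motivic equivalences well enough to see it is strongly saturated and generated by a \emph{set}; then show the abstract localization $\Cat^\perf[W^{-1}]$ is a presentable stable \category carrying a finitary localizing invariant out of $\Cat^\perf$; and finally invoke the universal property of $\mathcal U_\loc$ to identify $\Cat^\perf[W^{-1}]$ with $\Mot_\loc$.

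For the first point, recall that $\Cat^\perf$ is cocomplete and that for a fully faithful $C\hookrightarrow D$ the Karoubi quotient $D/C$ is exactly the pushout $D\cup_C 0$, so a finitary localizing invariant into a cocomplete \category is precisely a functor preserving filtered colimits, the zero object, and the pushouts of the shape $0\leftarrow C\xrightarrow{\text{f.f.}}D$. One checks that $W:=\mathcal U_\loc^{-1}(\text{equivalences})$ is strongly saturated: closure under $2$-out-of-$3$, composition and retracts is tautological, and closure under cobase change follows from $\mathcal U_\loc$ sending Karoubi cofiber sequences to cofiber sequences (if $\mathrm{cofib}(\mathcal U_\loc X\to \mathcal U_\loc Y)=0$ then $\mathrm{cofib}(\mathcal U_\loc Z\to \mathcal U_\loc(Y\cup_X Z))=0$, hence an equivalence in the stable \category $\Mot_\loc$); the same closure properties make $\Cat^\perf\to \Cat^\perf[W^{-1}]$ preserve finite and filtered colimits. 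Small generation is the usual size reduction: after fixing a regular cardinal $\kappa$ bounding the categories in play and passing to an essentially small skeleton, one takes for the generating set $W_0$ the comparison maps witnessing preservation of the Verdier pushouts and of $\kappa$-small filtered colimits of categories in that skeleton, and a cofinality argument shows these generate $W$.

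For the second point, write $\mathcal L:=\Cat^\perf[W^{-1}]$. It is accessible because $W$ is of small generation, and cocomplete because the filtered colimits and Verdier pushouts of $\Cat^\perf$ descend and, together with the zero object, generate all small colimits of $\mathcal L$. The one nonformal input is that the suspension endofunctor of $\mathcal L$ is an equivalence: this is the classical nonconnective delooping going back to Thomason and to Blumberg--Gepner--Tabuada \cite{BGT13}. Every localizing invariant annihilates \emph{flasque} categories, i.e. those $F$ admitting an Eilenberg swindle $F\simeq \bigsqcup_{\mathbb N}F$ (for instance the $\aleph_1$-compact objects $\Ind(C)^{\aleph_1}$ of $\Ind(C)$), so $W$ contains the maps witnessing this vanishing; presenting $\Sigma C$ as the Karoubi quotient of such a flasque category by $C$ then shows $\Sigma$ is an equivalence on $\mathcal L$. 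A pointed presentable \category on which suspension is an equivalence is stable, so $\mathcal L$ is presentable and stable.

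For the last point: $\Cat^\perf\to \mathcal L$ is, by the above, a finitary localizing invariant into a presentable stable \category, so the universal property of $\mathcal U_\loc$ produces a colimit-preserving functor $\Mot_\loc\to \mathcal L$ under $\Cat^\perf$; conversely $\mathcal U_\loc$ inverts $W$ and hence factors as $\Cat^\perf\to \mathcal L\to \Mot_\loc$. Both composites $\Mot_\loc\to \Mot_\loc$ and $\mathcal L\to \mathcal L$ preserve colimits and restrict to the identity on the image of $\Cat^\perf$, which generates both \categories under colimits, so they are each the identity; thus $\mathcal L\simeq \Mot_\loc$ over $\Cat^\perf$, which is exactly the assertion that $\mathcal U_\loc$ is a Dwyer--Kan localization at $W$. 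I expect the main obstacle to be controlling the abstract localization $\Cat^\perf[W^{-1}]$ directly --- above all, proving it is stable via the Eilenberg swindle, together with the size bookkeeping needed for the small generation of $W$ --- since the final identification is then a formal consequence of universal properties.
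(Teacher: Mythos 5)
The paper does not prove this theorem: it is stated as a black box and cited to the joint work \cite{RSW} with Sosnilo and Winges, so there is no ``paper's own proof'' to compare against. But your proposal has a genuine gap that would need to be addressed before it could stand on its own.

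The fatal step is the claim that $W=\mathcal U_\loc^{-1}(\text{equivalences})$ is strongly saturated and of small generation. If that were true, then by the standard theory of accessible localizations (HTT 5.5.4.20) the Dwyer--Kan localization $\Cat^\perf[W^{-1}]$ would coincide with the \emph{reflective} (Bousfield) localization of $\Cat^\perf$ at $W$, i.e.\ $\mathcal U_\loc$ would be left adjoint to a fully faithful embedding $\Mot_\loc\hookrightarrow\Cat^\perf$. This is false: if it held, the unit $C\to i\,\mathcal U_\loc(C)$ would exhibit $\Map_{\Mot_\loc}(\mathcal U_\loc C,\mathcal U_\loc D)\simeq\Map_{\Cat^\perf}(C,i\,\mathcal U_\loc D)$, a space of exact functors, whereas by \Cref{cor:mapinmot} (or the analogue for $\Mot_\loc$) this mapping spectrum is a $K$-theory spectrum $K(\Fun^\ext(C,D))$, which is not the groupoid core of $\Fun^\ext(C,D)$ under the canonical map. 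So $\Mot_\loc$ is not a reflective localization of $\Cat^\perf$, and $W$ cannot be strongly saturated of small generation. Concretely, your argument for closure of $W$ under cobase change tacitly uses that $\mathcal U_\loc$ sends pushouts along arbitrary maps $X\to Z$ to pushouts in $\Mot_\loc$; but $\mathcal U_\loc$ only preserves filtered colimits and the specific pushouts $0\leftarrow C\hookrightarrow D$ defining Karoubi quotients, not general ones. Closure of $W$ under colimits in $\Fun(\Delta^1,\Cat^\perf)$ fails for the same reason.

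Because $\Mot_\loc$ is not a reflective localization, proving that $\mathcal U_\loc$ is nevertheless a Dwyer--Kan localization requires a different, more hands-on argument: one has to get direct control over the localization $\Cat^\perf[W^{-1}]$ (for instance by a calculus-of-fractions--type description of its mapping spaces, exploiting the categorified suspension/Calkin construction $\mathcal U_\loc(\mathrm{Calk}(C))\simeq\Sigma\,\mathcal U_\loc(C)$ that you correctly identify) and then match it against the explicit BGT presentation of $\Mot_\loc$. The Eilenberg swindle/Calkin part of your outline is heading in the right direction for the stability of the localization, but it does not by itself close the gap in the identification step, which is the actual content of the theorem in \cite{RSW}.
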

The point of this theorem is that it allows us to compare mapping spaces/spectra of the form $\Map_{\Fun(\Mot_\loc,\E)}(F,G)$ and $\Map_{\Fun(\Cat^\perf,\E)}(F\circ \mathcal U_\loc,G\circ \mathcal U_\loc)$ even when $F,G$ need not be cocontinuous functors out of $\Mot_\loc$. We will use this for $F,G$ being functors such as $\THH^{\otimes n}$, which, while clearly not cocontinuous in general (it is $n$-excisive), still clearly factors through $\Mot_\loc$.

\section{Actions on traces}\label{app:acttr}
Let $\C$ be a symmetric monoidal \category. In \cite{HSS}, the authors construct a trace map $$\tr: \Map(S^1,\C^\dbl)\to \End(\one_\C)$$ 
It is $S^1$-equivariant, and in particular induces a map $\C^\dbl\to \End(\one_\C)^{BS^1}$ upon taking $S^1$-fixed points. 

In particular, whenever $A\to\C^\dbl$ is a map from a space $A$, we obtain a map $LA\to~\End(\one_\C)$. 

\begin{ex}\label{ex:actionontr}
    Let $x\in\C^\dbl$, and consider the $\Sigma_n$-action on $x^{\otimes n}$. It corresponds formally to a map $B\Sigma_n\to \C^\dbl$ which in turn induces a map $LB\Sigma_n\to \map(S^1,\C^\dbl)\to \End(\one_\C)$. 

    Restricting, e.g. to the conjugacy class of the length $n$-cycle $\sigma$ in $\Sigma_n$, we obtain a map $BC_n\to \End(\one_\C)$ sending the point to $\tr(x^{\otimes n};\sigma)$, i.e. a $C_n$-action on this trace. 
\end{ex}

 We have observed in \Cref{lm:trperm} that for this length $n$ cycle, $\tr(x^{\otimes n};\sigma)=\dim(x)$, and thus the above construction produces a $C_n$-action on $\dim(x)$. 

The goal of this appendix is to prove the following claim:
\begin{prop}\label{prop:actontr}
    The equivalence $$\tr(x^{\otimes n};\sigma)\simeq \dim(x)$$ is $C_n$-equivariant, where the right hand side has the $C_n$-action described in \Cref{ex:actionontr}, and the left hand side has the $C_n$-action restricted from the usual $S^1$-action. 
\end{prop}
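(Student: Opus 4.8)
The plan is to reduce everything to the universal case and then carry out the computation inside the one-dimensional cobordism category $\Cob_1$, exactly as in the proof of \Cref{lm:trperm}. By the cobordism hypothesis (Harpaz), $\Cob_1$ is the free symmetric monoidal \category on a dualizable object $1^+$, so the pair $(x^{\otimes n},\sigma)$ together with its $C_n$-action — i.e. the map $BC_n\to \C^\dbl$ from \Cref{ex:actionontr}, obtained by restricting $B\Sigma_n\to\C^\dbl$ along $BC_n\to B\Sigma_n$ — is classified by a symmetric monoidal functor $\Cob_1\to\C$, and both the equivalence $\tr(x^{\otimes n};\sigma)\simeq\dim(x)$ and the two $C_n$-actions are pulled back from $\Cob_1$. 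So it suffices to prove the statement for $x=1^+$ in $\Cob_1$: that the evident identification of $\tr((1^+)^{\otimes n};\sigma)$ with the circle $\dim(1^+)$ is $C_n$-equivariant when the left side carries the action induced by functoriality along $LB\Sigma_n$ (restricted to the $BC_n$ sitting over the conjugacy class of $\sigma$), and the right side carries the restriction of the tautological $S^1$-action on $\dim(1^+)=\tr(\id_{1^+})\in\Omega\Frrig(\pt)$.

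Next I would make the $C_n$-action concrete combinatorially. As recalled in the proof of \Cref{lm:trperm}, $\tr((1^+)^{\otimes n};\sigma)$ is built by gluing: the cobordism realizing $\sigma: n^+\to n^+$, the identity cobordism $n^-\to n^-$, and $n$ copies each of evaluation and coevaluation, and the resulting closed $1$-manifold is connected precisely because $\sigma$ is an $n$-cycle — it is a single circle. The $C_n$-action comes from the fact that $\sigma$ is a loop in $B\Sigma_n$ based at the object $[n]$, so an element of $LB\Sigma_n$; functoriality of $\tr$ in this loop space produces the action. Concretely, going around the generating loop once permutes the $n$ "strands" cyclically, and tracing through the gluing description above, this rotates the circle $\tr((1^+)^{\otimes n};\sigma)$ by $1/n$ of a full turn. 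The key structural point is that the standard $S^1$-action on $\dim(1^+)$ (rotation of the circle) restricts along $C_n\hookrightarrow S^1$ to exactly the order-$n$ rotation, so I need to match "cyclic permutation of strands" with "rotation by $2\pi/n$".

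The cleanest way to make this matching precise, and the step I expect to be the main obstacle, is to identify the relevant map of loop spaces. The inclusion of the length-$n$ cycle gives a map $BC_n\to LB\Sigma_n$ (the free loop space component of $\sigma$, which retains a $C_n$-action since $C(\sigma)=\langle\sigma\rangle\cong C_n$), and I want to compare the composite $BC_n\to LB\Sigma_n\to L\Frrig(\pt)\xrightarrow{\tr}\End(\one)^{??}$ with the classifying map of the $S^1$-action on $\dim$. Here I would use that $\dim(1^+)$, as an object of $\Omega\Frrig(\pt) = \End(\one_{\Cob_1})$, is the circle $S^1$ (a connected $1$-manifold), that its $S^1$-action is the one coming from $S^1$-equivariance of $\tr$ applied to the loop $\id_{1^+}$, and that there is a map of $S^1$-spaces relating the two: the $C_n$-cover $S^1\to S^1$ (degree $n$) induces, on the level of traces, the identification of $\tr((1^+)^{\otimes n};\sigma)$ with $\tr(\id_{1^+})$ after taking into account that the $n$-fold "wrap" corresponds to the cyclic group. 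In other words, the claim should follow from the compatibility of the trace with the $n$-fold cover $S^1\xrightarrow{n}S^1$ and naturality of the $S^1$-equivariant trace map of \cite{HSS} under this cover. I would spell this out by drawing the gluing diagram for $\tr((1^+)^{\otimes n};C_n)$, exhibiting it explicitly as the mapping torus of the cyclic rotation of $n$ points — which is visibly $S^1$ with its standard $C_n\subset S^1$ rotation — and then checking that the functoriality-in-$LB\Sigma_n$ action is precisely this mapping-torus rotation. This last identification is essentially bookkeeping with the $S^1$-equivariant structure on $\tr$, but it is where all the actual content lies, and it is the part that "takes up a bit of space" as the body of the paper warns.
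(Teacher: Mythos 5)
Your reduction to the universal case $\C=\Cob_1$, $x$ the universal dualizable object, is correct and matches the start of the paper's proof. But from that point on you take a different route, and the route has a genuine gap at exactly the step you flag as ``essentially bookkeeping.'' Identifying the $C_n$-action obtained by functoriality along $LB\Sigma_n$ with the geometric rotation of the mapping torus is not a bookkeeping step: the functoriality action is abstract homotopy-coherent data arising from the $S^1$-equivariant trace map $\tr:\Map(S^1,\C^\dbl)\to\End(\one_\C)$, and to equate it with a concrete rotation you would need a concrete model of $\Cob_1$, its dualizability data, and the trace functor, plus a coherent comparison between the two. You do not supply such a model. Your suggested appeal to ``compatibility of the trace with the $n$-fold cover $S^1\xrightarrow{n}S^1$'' is not a naturality that the trace construction of \cite{HSS} hands you directly (the trace is natural in the argument of the free loop space, not contravariantly in covers of the source circle), so this would itself need to be established. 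In short, you have reduced to the universal case, but the hard part -- which the paper explicitly warns takes up space -- is left undone.

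The paper's own proof avoids working directly in $\Cob_1$. It uses \Cref{obs:cobcospan}: the symmetric monoidal functor $\Cob_1\to\cospan(\Ss^\fin)$ is, on automorphisms of the circle in $\End(\one)$, the injection $(\id,0):S^1\to S^1\times\Z/2$. Since a $C_n$-action on a single circle is classified by a map $BC_n\to BS^1$, injectivity on this automorphism space means that the two $C_n$-actions you want to compare can be detected after pushing forward along $\Cob_1\to\cospan(\Ss^\fin)$, where ``we have more freedom.'' The paper then passes further to $\cospan(\Ss^\fin/BC_n)$, observes that the endomorphisms of the unit there are just $(\Ss^\fin/BC_n)^\simeq$, and uses naturality in $A$ plus the Yoneda lemma to identify the trace-induced map $LA\to(\Ss^\fin/A)^\simeq$ with the evident ``canonical'' map given by pulling back the universal circle. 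After that, the whole assertion becomes a statement about a map of ordinary $2$-groupoids $\map(B\Z,BC_n)\to(1\mathrm{Gpd}/BC_n)^\simeq$, which can be verified by hand. The point of the detour through cospans is precisely to convert the coherence problem you left open into a discrete, low-categorical computation. You would either need to adopt a version of this detection trick, or else actually build the explicit $\Cob_1$-model you are implicitly assuming; as written, your proposal has a hole where the proof should be.
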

As usual, it suffices to prove this in the universal case, i.e. where $\C= \Cob_1$ and $x$ is the universal dualizable object. 

To prove this case, we observe the following:
\begin{obs}\label{obs:cobcospan}
    The symmetric monoidal functor $\Cob_1\to \cospan(\Ss^\fin)$ induces, on endomorphisms of the unit, a map which sends the circle (as a framed manifold) to the circle (as a homotopy type), and on automorphisms of those, it is given by  $(\id, 0): S^1\to S^1\times\Z/2$. 
\end{obs}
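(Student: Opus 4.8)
The plan is to pin down the functor $\Cob_1 \to \cospan(\Ss^\fin)$ and then chase the circle, and its automorphisms, through it. First I would recall that $\Cob_1$ is the free symmetric monoidal \category on a dualizable object $1^+$, so that a symmetric monoidal functor out of it is the same datum as a dualizable object of the target; since every object of $\cospan(\Ss^\fin)$ is (self-)dualizable for the coproduct monoidal structure, the functor in question is the one classifying the point $\ast$. Equivalently --- and this is the model I would use to read off normalizations --- it is the ``underlying anima'' functor, sending a closed $1$-manifold to its homotopy type and a bordism $W$ from $M$ to $N$ to the cospan $|M| \to |W| \leftarrow |N|$; this is symmetric monoidal and sends $1^+ = \pt$ to $\ast$, hence agrees with the previous description by the universal property \cite{harpazcob}.

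For the object part, I would use that the circle in $\End(\one_{\Cob_1})$ is $\tr(\id_{1^+}) = \dim(1^+)$, hence is carried to $\dim_{\cospan(\Ss^\fin)}(\ast)$. A direct computation of this trace of the identity --- taking the self-duality of $\ast$ given by the fold cospans $\emptyset \to \ast \xleftarrow{\nabla} \ast \sqcup \ast$ and $\ast \sqcup \ast \xrightarrow{\nabla} \ast \leftarrow \emptyset$, and composing --- identifies it with the pushout $\ast \sqcup_{\ast \sqcup \ast} \ast = \ast \sqcup_{S^0} \ast$ of the two fold maps, which is the unreduced suspension of $S^0$, i.e.\ the circle $S^1 = B\Z$ as an anima. (Since the value of the trace of the identity does not depend on the chosen duality data, this computation is robust.)

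For the automorphism part, I would identify the two automorphism spaces. From $\End(\one_{\Cob_1}) = \coprod_n (BS^1)^n_{h\Sigma_n}$ the automorphism space of the circle (the $n=1$ summand) is $\Omega BS^1 \simeq S^1$, and this $S^1$ is exactly the canonical rotation action on $\dim(1^+)$ --- in the geometric picture it is the rotation group $SO(2) \simeq \mathrm{Diff}^+(S^1)$ of \Cref{ex:actionontr}, realizing the full automorphism group. On the other side, $\End(\one_{\cospan(\Ss^\fin)})$ is the core $(\Ss^\fin)^\simeq$, so the automorphism space of the circle there is $\Aut_\Ss(S^1)$, which --- $S^1 = B\Z$ having homotopy concentrated in degree $1$ --- has two components indexed by $\Aut(\Z) = \Z/2$, each a copy of $BZ(\Z) = S^1$, whence $\Aut_\Ss(S^1) \simeq S^1 \rtimes \Z/2$, the $\Z/2$ (orientation reversal) acting by negation on $\pi_1$; as a bare space this is the ``$S^1 \times \Z/2$'' of the statement. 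The induced map $S^1 \to S^1 \rtimes \Z/2$ is a homomorphism of grouplike $\EE_1$-spaces out of the connected group $S^1$, so it lands in the identity component $SO(2) \simeq S^1$ --- geometrically, because orientation-preserving diffeomorphisms induce degree-$(+1)$ self-equivalences of $|S^1|$. To pin down the resulting map $S^1 \to SO(2)$ as the identity, I would evaluate at the basepoint: $\mathrm{Diff}^+(S^1) \simeq SO(2) \hookrightarrow \Aut_\Ss(S^1) \xrightarrow{\mathrm{ev}} S^1$ is the tautological free transitive action of $SO(2)$ on the circle, an equivalence, while $\mathrm{ev}$ restricts to an equivalence on the identity component; hence $S^1 \to SO(2)$ is an equivalence, i.e.\ the total map is $(\id,0)$.

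The main obstacle will be the one genuinely non-formal point: matching the ``abstract'' circle-rotation $S^1 \subset \End(\one_{\Cob_1})$ with the geometric $SO(2)$, and checking its normalization (that the map to $SO(2)$ is $\id$, and not $-\id$ or a higher multiple). I expect the cleanest route is via the ``underlying anima'' model of the functor together with the fact that $\mathrm{Diff}^+(S^1) \simeq SO(2)$ sits in $\Aut_\Ss(S^1)$ as the identity component via the tautological ``a rotation is a homotopy equivalence'' inclusion; the remaining ingredients (the computation of $\dim(\ast)$ in $\cospan(\Ss^\fin)$ and of $\Aut_\Ss(S^1)$) are short and direct.
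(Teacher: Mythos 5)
Your proposal is correct and follows exactly the route the paper indicates: the paper's entire ``proof'' of this observation is the one-line parenthetical that it follows from explicitly describing the functor $\Cob_1\to\cospan(\Ss^\fin)$ using that $\Cob_1$ is essentially a category of cospans, and your argument (universal property of $\Cob_1$, the underlying-anima/cospan model, the pushout computation $\ast\sqcup_{S^0}\ast\simeq S^1$, and the identification of $\mathrm{Diff}^+(S^1)\simeq SO(2)$ with the identity component of $\Aut_\Ss(S^1)$ via evaluation at the basepoint) is a faithful and more detailed elaboration of precisely that sketch. The one point you rightly flag as non-formal --- matching the abstract $\Omega BS^1$ with the geometric rotation group and the functor on automorphisms with the tautological inclusion --- is exactly the content the paper also leaves to the explicit cospan model, so there is no gap relative to what the paper itself provides.
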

(this can be proved by explicitly describing the functor $\Cob_1\to \cospan(\Ss^\fin)$, using that $\Cob_1$ is essentially a category of cospans)

In particular, to prove that the $C_n$-actions agree, it suffices to do so after passing to $\cospan(\Ss^\fin)$, where we have more freedom. For example, we can try to identify the $C_n$-action \emph{on the trace of the universal $C_n$-action} (for some sense of universal). 

Consider the natural map $A\to \cospan(\Ss^\fin/A)$. By the construction above, it induces a natural map $LA\to (\Ss^\fin/A)^\simeq$, as the unit in $\cospan(\Ss^\fin/A)$ is $\empty$, and so its endomorphism space is precisely $(\Ss^\fin/A)^\simeq$. 

By naturality in $A$, the Yoneda lemma implies that this is completely determined  by the value at $S^1$ of the identity in $LS^1=\Map(S^1,S^1)$, which is, in turn, a certain map $S^1\to S^1$. 

Comparing $\Ss^\fin/S^1\simeq (\Ss^{B\Z})^{\fin}$,we find that this trace, in $\cospan((\Ss^{B\Z})^\fin)$ is the pushout of the following span:
\[\begin{tikzcd}
	\Z && \Z \\
	& \Z\coprod\Z
	\arrow["{(\id,\id)}", from=2-2, to=1-1]
	\arrow["{(\id,\sigma)}"', from=2-2, to=1-3]
\end{tikzcd}\] 
which one easily computes to be $\pt$ (here, $\sigma$ denotes the successor function). Now, the object $\pt$ corresponds to the identity map $S^1\to S^1$ under the equivalence $\Ss^{B\Z}\simeq \Ss_{/S^1}$, so this proves (by the Yoneda lemma) that the map $LA\to (\Ss^\fin/A)^\simeq$ we constructed with traces is simply the composite $$LA = (\Ss^\fin/A \times_{\Ss^\fin} \{S^1\})^\simeq \to (\Ss^\fin/A)^\simeq$$

\begin{proof}[Proof of \Cref{prop:actontr}]
By the discussion in \Cref{obs:cobcospan}, it suffices to prove this equivalence in the special case of $\C=\cospan(\Ss^\fin), x=\pt$. 

The composite functor $BC_n\to \cospan((\Ss^{BC_n})^\fin)\to \cospan(\Ss^\fin)$, where the second functor is the forgetful functor, classifies $S^1$ with the action induced from $C_n\subset S^1$. 

Thus it suffices to prove that the $BC_n$-action on $\tr(\pt;\sigma)\simeq S^1$ in endomorphisms of the unit in $\cospan((\Ss^{BC_n})^\fin)$ is the canonical one, or equivalently, the same statement for $\tr(S^1;\sigma)$ in $\cospan(\Ss^\fin/BC_n)$.

But we have identified the corresponding trace map $LBC_n\to (\Ss^\fin/BC_n)^\simeq$ with the ``canonical map'', and this canonical map is now a map of ordinary $2$-groupoids, namely: $$\map(B\Z,BC_n)\to (1\mathrm{Gpd}/BC_n)^\simeq$$ which one can simply fully identify by hand, and check the statement there. 
    \end{proof}

\bibliographystyle{alpha}
\bibliography{Biblio.bib}

\begin{thebibliography}{CCRY23}

\bibitem[AA66]{adamsatiyah}
John~F Adams and Michael~F Atiyah.
\newblock K-theory and the {H}opf invariant.
\newblock {\em The Quarterly Journal of Mathematics}, 17(1):31--38, 1966.

\bibitem[BGT13]{BGT13}
A.~Blumberg, D.~Gepner, and G.~Tabuada.
\newblock {A Universal Characterization of Higher Algebraic K-Theory}.
\newblock {\em Geom. Top.}, 17(2):733–--838, 2013.

\bibitem[CCRY23]{CCRY}
Shachar Carmeli, Bastiaan Cnossen, Maxime Ramzi, and Lior Yanovski.
\newblock Characters and transfer maps via categorified traces, 2023.

\bibitem[DM94]{DM}
Bjorn~Ian Dundas and Randy McCarthy.
\newblock Stable {K}-theory and topological {H}ochschild homology.
\newblock {\em Annals of Mathematics}, 140(3):685--701, 1994.

\bibitem[Efi]{youtubeSasha}
Alexander Efimov.
\newblock Localizing motives, algebraic {KK}-theory and refined negative cyclic homology, {BunG} seminar.
\newblock \href{Localizing motives, algebraic {KK}-theory and refined negative cyclic homology}{Available on Youtube}.

\bibitem[Gun80]{gunawardenasegal}
J.H. Gunawardena.
\newblock {S}egal’s conjecture for cyclic groups of (odd) prime order.
\newblock {\em JT Knight prize essay, Cambridge}, 1980.

\bibitem[Har12]{harpazcob}
Yonatan Harpaz.
\newblock The cobordism hypothesis in dimension 1.
\newblock {\em arXiv preprint arXiv:1210.0229}, 2012.

\bibitem[Hes94]{LarsTC}
Lars Hesselholt.
\newblock Stable topological cyclic homology is topological {H}ochschild homology.
\newblock {\em Asterisque}, 226:175--192, 1994.

\bibitem[Heu21a]{heutsgoodwillie}
Gijs Heuts.
\newblock {\em Goodwillie approximations to higher categories}, volume 272.
\newblock American Mathematical Society, 2021.

\bibitem[Heu21b]{heuts}
Gijs Heuts.
\newblock Lie algebras and {$v_n$}-periodic spaces.
\newblock {\em Annals of Mathematics}, 193(1):223--301, 2021.

\bibitem[HNS24]{Victor1}
Yonatan Harpaz, Thomas Nikolaus, and Victor Saunier.
\newblock Trace methods for stable categories {I}: The linear approximation of algebraic {K}-theory.
\newblock {\em arXiv preprint arXiv:2411.04743}, 2024.

\bibitem[HR21]{horel}
Geoffroy Horel and Maxime Ramzi.
\newblock A multiplicative comparison of {Mac Lane} homology and topological {H}ochschild homology.
\newblock {\em Annals of K-Theory}, 6(3):571--605, 2021.

\bibitem[HS19]{OWRTC}
Lars Hesselholt and Peter Scholze.
\newblock Arbeitsgemeinschaft: {T}opological cyclic homology.
\newblock {\em Oberwolfach Reports}, 15(2):805--940, 2019.

\bibitem[HSS17]{HSS}
Marc Hoyois, Sarah Scherotzke, and Nicolo Sibilla.
\newblock Higher traces, noncommutative motives, and the categorified {C}hern character.
\newblock {\em Advances in Mathematics}, 309:97--154, 2017.

\bibitem[Kla15]{klamt}
Angela Klamt.
\newblock The complex of formal operations on the {H}ochschild chains of commutative algebras.
\newblock {\em Journal of the London Mathematical Society}, 91(1):266--290, 2015.

\bibitem[KN22]{KNBök}
Achim Krause and Thomas Nikolaus.
\newblock B{\"o}kstedt periodicity and quotients of {DVR}s.
\newblock {\em Compositio Mathematica}, 158(8):1683--1712, 2022.

\bibitem[Kuh04]{kuhn}
Nicholas~J Kuhn.
\newblock Tate cohomology and periodic localization of polynomial functors.
\newblock {\em Inventiones mathematicae}, 157(2):345--370, 2004.

\bibitem[Lin80]{linsegal}
Wen-Hsiung Lin.
\newblock On conjectures of {Mahowald, Segal and Sullivan}.
\newblock In {\em Mathematical Proceedings of the Cambridge Philosophical Society}, volume~87, pages 449--458. Cambridge University Press, 1980.

\bibitem[LM24]{limondal}
Shizhang Li and Shubhodip Mondal.
\newblock On endomorphisms of the de {R}ham cohomology functor.
\newblock {\em Geometry \& Topology}, 28(2):759--802, 2024.

\bibitem[Lod89]{loday}
Jean-Louis Loday.
\newblock Op{\'e}rations sur l'homologie cyclique des alg{\`e}bres commutatives.
\newblock {\em Inventiones mathematicae}, 96:205--230, 1989.

\bibitem[LT23]{LT2}
Markus Land and Georg Tamme.
\newblock On the {$ K $}-theory of pushouts.
\newblock {\em arXiv preprint arXiv:2304.12812}, 2023.

\bibitem[Lur08]{luriecob}
Jacob Lurie.
\newblock On the classification of topological field theories.
\newblock {\em Current developments in mathematics}, 2008(1):129--280, 2008.

\bibitem[Lur09]{HTT}
Jacob Lurie.
\newblock {\em Higher topos theory}.
\newblock Princeton University Press, 2009.

\bibitem[Lur11]{DAG13}
Jacob Lurie.
\newblock {DAG XIII: R}ational and p-adic homotopy theory, 2011.

\bibitem[Lur12]{HA}
Jacob Lurie.
\newblock Higher algebra, 2012.

\bibitem[NS18]{NS}
Thomas Nikolaus and Peter Scholze.
\newblock On topological cyclic homology.
\newblock {\em Acta Math.}, 221(2):203--409, 2018.

\bibitem[Ram24]{Thesis}
Maxime Ramzi.
\newblock {\em {Separability in homotopy theory and topological Hochschild homology}}.
\newblock PhD thesis, University of Copenhagen, 2024.
\newblock \href{https://sites.google.com/view/maxime-ramzi-en/publicationspreprints/thesis}{Available at author's webpage}.

\bibitem[Ras18]{raskin}
Sam Raskin.
\newblock On the {Dundas-Goodwillie-McCarthy} theorem.
\newblock {\em arXiv preprint arXiv:1807.06709}, 2018.

\bibitem[RSW]{Motloc}
Maxime Ramzi, Vladmir Sosnilo, and Christoph Winges.
\newblock Every motive is the motive of a stable {$\infty$}-category.
\newblock In preparation.

\bibitem[RSW24]{RSW}
Maxime Ramzi, Vladimir Sosnilo, and Christoph Winges.
\newblock Every spectrum is the {K-theory of a stable $\infty$}-category.
\newblock {\em arXiv preprint arXiv:2401.06510}, 2024.

\bibitem[Wah16]{wahl}
Nathalie Wahl.
\newblock Universal operations in {H}ochschild homology.
\newblock {\em Journal f{\"u}r die reine und angewandte Mathematik (Crelles Journal)}, 2016(720):81--127, 2016.

\bibitem[WW16]{wahlwesterland}
Nathalie Wahl and Craig Westerland.
\newblock Hochschild homology of structured algebras.
\newblock {\em Advances in Mathematics}, 288:240--307, 2016.

\bibitem[Yua23]{yuanFrob}
Allen Yuan.
\newblock Integral models for spaces via the higher {F}robenius.
\newblock {\em Journal of the American Mathematical Society}, 36(1):107--175, 2023.

\end{thebibliography}

\end{document}